\begin{document}

\newcommand{ \bl}{\color{blue}}
\newcommand {\rd}{\color{red}}
\newcommand{ \bk}{\color{black}}
\newcommand{ \gr}{\color{OliveGreen}}
\newcommand{ \mg}{\color{RedViolet}}

\newcommand{\norm}[1]{||#1||}
\newcommand{\normo}[1]{|#1|}
\newcommand{\secn}[1]{\addtocounter{section}{1}\par\medskip\noindent
 {\large \bf \thesection. #1}\par\medskip\setcounter{equation}{0}}

\newcommand{\secs}[1]{\addtocounter {section}{1}\par\medskip\noindent
 {\large \bf  #1}\par\medskip\setcounter{equation}{0}}
\renewcommand{\theequation}{\thesection.\arabic{equation}}

\setlength{\baselineskip}{16pt}

\newtheorem{theorem}{Theorem}[section]
\newtheorem{lemma}{Lemma}[section]
\newtheorem{proposition}{Proposition}[section]
\newtheorem{definition}{Definition}[section]
\newtheorem{example}{Example}[section]
\newtheorem{corollary}{Corollary}[section]

\newtheorem{remark}{Remark}[section]

\numberwithin{equation}{section}

\def\p{\partial}
\def\I{\textit}
\def\R{\mathbb R}
\def\C{\mathbb C}
\def\u{\underline}
\def\l{\lambda}
\def\a{\alpha}
\def\O{\Omega}
\def\e{\epsilon}
\def\ls{\lambda^*}
\def\D{\displaystyle}
\def\wyx{ \frac{w(y,t)}{w(x,t)}}
\def\imp{\Rightarrow}
\def\tE{\tilde E}
\def\tX{\tilde X}
\def\tH{\tilde H}
\def\tu{\tilde u}
\def\d{\mathcal D}
\def\aa{\mathcal A}
\def\DH{\mathcal D(\tH)}
\def\bE{\bar E}
\def\bH{\bar H}
\def\M{\mathcal M}
\renewcommand{\labelenumi}{(\arabic{enumi})}

\def\disp{\displaystyle}
\def\undertex#1{$\underline{\hbox{#1}}$}
\def\card{\mathop{\hbox{card}}}
\def\sgn{\mathop{\hbox{sgn}}}
\def\exp{\mathop{\hbox{exp}}}
\def\OFP{(\Omega,{\cal F},\PP)}
\newcommand\JM{Mierczy\'nski}
\newcommand\RR{\ensuremath{\mathbb{R}}}
\newcommand\CC{\ensuremath{\mathbb{C}}}
\newcommand\QQ{\ensuremath{\mathbb{Q}}}
\newcommand\ZZ{\ensuremath{\mathbb{Z}}}
\newcommand\NN{\ensuremath{\mathbb{N}}}
\newcommand\PP{\ensuremath{\mathbb{P}}}
\newcommand\abs[1]{\ensuremath{\lvert#1\rvert}}

\newcommand\normf[1]{\ensuremath{\lVert#1\rVert_{f}}}
\newcommand\normfRb[1]{\ensuremath{\lVert#1\rVert_{f,R_b}}}
\newcommand\normfRbone[1]{\ensuremath{\lVert#1\rVert_{f, R_{b_1}}}}
\newcommand\normfRbtwo[1]{\ensuremath{\lVert#1\rVert_{f,R_{b_2}}}}
\newcommand\normtwo[1]{\ensuremath{\lVert#1\rVert_{2}}}
\newcommand\norminfty[1]{\ensuremath{\lVert#1\rVert_{\infty}}}
\newcommand{\ds}{\displaystyle}

\title{Persistence, coexistence and extinction in two species chemotaxis models on bounded heterogeneous environments}
\author{
Tahir  Bachar Issa and Wenxian Shen  \\
Department of Mathematics and Statistics\\
Auburn University\\
Auburn University, AL 36849\\
U.S.A. }

\date{}
\maketitle

\noindent {\bf Abstract.} { { In this paper, we} consider  two species chemotaxis systems with Lotka-Volterra type competition terms in heterogeneous media. We first find various conditions on the parameters which guarantee the global existence and boundedness of classical solutions with nonnegative initial functions. Next, we find further conditions on the parameters which establish the persistence of the two species. Then, under the same set of conditions  for the persistence of two species, we  prove the existence of coexistence states.  Finally we prove the extinction phenomena in the sense that one of the species dies out asymptotically and the other reaches its carrying capacity as time goes to infinity.  The persistence in general two species chemotaxis systems is studied for the first time.   Several important techniques are developed to study the persistence and coexistence of  two species chemotaxis systems. Many existing results on the persistence, coexistence, and extinction  on two species competition systems without chemotaxis are recovered.
}
\medskip

\noindent {\bf Key words.} {Global existence,  classical solutions, persistence, coexistence states,  entire solutions, periodic solutions, almost periodic solutions, steady state solutions, extinction,  comparison principle.}

\medskip

\noindent {\bf 2010 Mathematics Subject Classification.} 35A01, 35A02, 35B08,  35B40, 35K57, 35Q92, 92C17.

\section{Introduction and the statements of the main results}
\label{S:intro}
Chemotaxis, the tendency of cells, bacteria, or   organisms to orient the direction of their movements toward the increasing or decreasing concentration of a signaling chemical substance,  has a crucial role in a wide range of biological phenomena such as immune system response, embryo development, tumor growth, etc. (see  \cite{ISM04}).  Recent studies describe also macroscopic process such as population dynamics or gravitational collapse, etc.,  in terms of chemotaxis  (see \cite{DAL1991}).  Because of its  crucial  role in the above mentioned process and others, chemotaxis has attracted great attention in both   biological and mathematical communities.  In 1970's, Keller and Segel  proposed in \cite{KS1970, KS71}  a celebrated mathematical model (K-S model) made up of two parabolic equations to describe chemotaxis. Since their pioneering works, a large amount of work has been devoted to determine whether solutions exists globally or when blow-up occurs in the K-S model and its various forms of variants. For a broad survey on the progress of various chemotaxis models and a rich selection of references, we refer the reader  to the survey papers \cite{NBYTMW05, THKJP09, H03, Win2013}. But many fundamental problems are still not well understood yet.  In particular, there is little study on chemotaxis systems with time and space dependent logistic sources.

 In reality, the environments of many living organisms are spatially and temporally heterogeneous. It is of both biological and mathematical interests to study chemotaxis models with certain time and space dependence. To the best of our knowledge, the first paper on chemotaxis systems with time and space dependent logistic sources is our paper \cite{ITBWS16}, where we considered a one species parabolic-elliptic chemotaxis
 model and  proved, under the well known  assumption of  smallness of chemotaxis effect comparing  to logistic damping effect, the global existence and boundedness of nonnegative classical solutions,
  the existence of  positive entire solutions,  and under some further conditions, the uniqueness and nonlinear stability of entire solutions.

  In the current paper, we consider the following two species parabolic-parabolic-elliptic chemotaxis system with heterogeneous Lotka-Volterra type competition terms,
 \begin{equation}
 \label{u-v-w-eq00}
\begin{cases}
u_t=d_1\Delta u-\chi_1\nabla\cdot (u \nabla w)+u\Big(a_0(t,x)-a_1(t,x)u-a_2(t,x)v\Big),\quad x\in \Omega\cr
v_t=d_2\Delta v-\chi_2\nabla \cdot(v \nabla w)+v\Big(b_0(t,x)-b_1(t,x)u-b_2(t,x)v\Big),\quad x\in \Omega\cr
0=d_3\Delta w+k u+lv-\lambda w,\quad x\in \Omega \cr
\frac{\p u}{\p n}=\frac{\p v}{\p n}=\frac{\p w}{\p n}=0,\quad x\in\p\Omega,
\end{cases}
 \end{equation}
where $\Omega \subset \mathbb{R}^n(n\geq 1)$ is a bounded domain with smooth boundary,  $d_i$ ($i=1,2,3$) are positive constants, $\chi_1,\chi_2,k,l,\lambda$ are nonnegative constants, and $a_i(t,x)$ and $b_i(t,x)$ ($i=0,1,2$) are positive bounded smooth functions.

 Note that, in the absence of chemotaxis, that is,  $\chi_1=\chi_2=0$, the dynamics of \eqref{u-v-w-eq00} is determined by the first two equations, that is, the following two species competition system,
\begin{equation}
\begin{cases}
\label{u-v-eq00}
u_t=d_1\Delta u+u\Big(a_0(t,x)-a_1(t,x)u-a_2(t,x)v\Big),\quad x\in \Omega\cr
v_t=d_2\Delta v+v\Big(b_0(t,x)-b_1(t,x)u-b_2(t,x)v\Big),\quad x\in \Omega\cr
\frac{\p u}{\p n}=\frac{\p v}{\p n}=0,\quad x\in\p\Omega.
\end{cases}
 \end{equation}
Among interesting dynamical issues  in \eqref{u-v-w-eq00} and \eqref{u-v-eq00} are  persistence, coexistence, and extinction.
These dynamical issues for \eqref{u-v-eq00} have been extensively studied (see  \cite{Ahm}, \cite{FuMa97}, \cite{HeSh}, \cite{HeSh02},  etc.).
Several authors have studied these issues for system \eqref{u-v-w-eq00} with constant coefficients \cite{TBJLMM16, ITBRS17,   NT13, STW13, TW12}. For example in \cite{ITBRS17}, the authors considered a more general competitive-cooperative chemotaxis system with nonlocal terms logistic sources and proved both the phenomena of coexistence and of exclusion for parameters in some natural range.
 However, there is little study of these important issues for \eqref{u-v-w-eq00} with time and space dependent coefficients. The objective of this
paper is to investigate the persistence, coexistence, and extinction dynamics of \eqref{u-v-eq00}. In particular, we
 identify the circumstances  under which
 persistence or extinction occurs, and in the case that persistence occurs, we study the existence of coexistence states.

 In order to do so, we  first study the global existence of classical solutions of \eqref{u-v-w-eq00} with any given nonnegative initial functions.  Let
 $$ { C^+(\bar{\Omega})=\left\{u\in C(\bar{\Omega})  \,|\, u \geq 0 \right \}.}
   $$
   Note that for any given $t_0\in\RR$ and  $u_0,v_0 \in C^+(\bar{\Omega})$, $\eqref{u-v-eq00}$  has a unique  bounded  global  classical solution $(u(x,t;t_0,u_0,v_0),v(x,t;t_0,u_0,v_0))$ satisfying that
\begin{equation}
\label{ic}
(u(x,t_0;t_0,u_0,v_0),v(x,t_0;t_0,u_0,v_0))=(u_0(x),v_0(x)).
\end{equation}
 However, it is not known whether for any given $t_0\in\RR$ and  $u_0,v_0 \in { C^+(\bar{\Omega})}$,
\eqref{u-v-w-eq00} has a unique  bounded  global  classical solution $(u(x,t;t_0,u_0,v_0),v(x,t;t_0,u_0,v_0),w(x,t;t_0,u_0,v_0))$
  satisfying  \eqref{ic}. { It can be proved that for given $u_0,v_0 \in C^+(\bar{\Omega})$ and $t_0 \in \RR,$
  \eqref{u-v-w-eq00} has a unique  local  classical solution $(u(x,t;t_0,u_0,v_0),v(x,t;t_0,u_0,v_0),w(x,t;t_0,u_0,v_0))$
  satisfying \eqref{ic} (see Lemma \ref{lm-local-001}). If no confusion occurs,
  we denote  $(u(x,t;t_0,u_0,v_0),v(x,t;t_0,u_0,v_0),w(x,t;t_0,u_0,v_0))$ by $(u(x,t;t_0),v(x,t;t_0),w(x,t;t_0))$.} To formulate our results on global existence of classical solutions
of \eqref{u-v-w-eq00}, we introduce the following notations. For  a given function $f_i(t,x)$ defined on $\mathbb{R} \times \bar \Omega$ we put
 \begin{equation*}
 \label{f-i-sup-inf-eq1}
f_{i,\inf}=\inf _{ t \in\RR,x \in\bar{\Omega}}f_i(t,x),\quad f_{i,\sup}=\sup _{ t \in\RR,x \in\bar{\Omega}}f_i(t,x),
\end{equation*}
 \begin{equation*}
 \label{f-i-sup-inf-eq2}
f_{i,\inf}(t)=\inf _{x \in\bar{\Omega}}f_i(t,x),\quad f_{i,\sup}(t)=\sup _{x \in\bar{\Omega}}f_i(t,x),
 \end{equation*}
 unless specified otherwise.
 We also introduce the following assumptions.

 \smallskip
\noindent {\bf (H1)} {\it $a_i(t,x)$, $b_i(t,x)$, $\chi_i$  and  $d_3$, $k$ and $l$ satisfy
\begin{equation}
\label{global-existence-cond-eq00}
a_{1,\inf}>\frac{ k\chi_1}{d_3},\quad a_{2,\inf}\geq \frac{l \chi_1}{d_3}, \quad b_{1,\inf}\geq \frac{k \chi_2}{d_3},\quad \text{and} \quad b_{2,\inf}>\frac{ l\chi_2}{d_3}.
 \end{equation}
}

\noindent {\bf (H2)} {\it $a_i(t,x)$, $b_i(t,x)$, $\chi_i$  and  $d_3$, $k$ and $l$ satisfy
 \begin{equation}
\label{global-existence-cond-eq01}
a_{1,\inf}>\frac{ k\chi_1}{d_3},\quad  b_{2,\inf}>\frac{l \chi_2}{d_3},\quad {\rm and}\quad
\big(a_{1,\inf}-\frac{ k\chi_1}{d_3}\big)\big( b_{2,\inf}-\frac{ l\chi_2}{d_3}\big)>\frac{k \chi_2}{d_3}\frac{l \chi_1}{d_3}.
 \end{equation}
 }

\noindent {\bf (H3)} {\it $a_i(t,x)$, $b_i(t,x)$, $\chi_i$ and  $d_3$, $k$ and $l$ satisfy
\begin{equation}
\label{global-existence-cond-eq02}
a_{1,inf}>\max\{0,\frac{\chi_1k(n-2)}{d_3n}\}\ ,\qquad a_{2,\inf}> \max\{0,\frac{\chi_1l(n-2)}{d_3n}\},
\end{equation}
and
\begin{equation}
\label{global-existence-cond-eq03}
b_{1,\inf}>\max\{0, \frac{\chi_2k(n-2)}{d_3n}\}\ ,\qquad  b_{2,\inf}>\max\{0,\frac{\chi_2l(n-2)}{d_3n}\}.
\end{equation}
}

 Our results  on global existence and boundedness of  nonnegative classical  solutions of \eqref{u-v-w-eq00} are stated in the following theorem.

\begin{theorem}{ (Global Existence)}
\label{thm-global-000}
\begin{itemize}
\item[(1)] Assume that { (H1)} holds. Then for any $t_0\in\RR$ and  $u_0,v_0 \in { C^+(\bar{\Omega})}$,
$\eqref{u-v-w-eq00}{ +\eqref{ic}}$ has a unique  bounded  global  classical solution $(u(x,t;t_0),v(x,t;t_0)$,$w(x,t;t_0))$  which satisfies that
\begin{equation}\label{global-existence-eq00}
\lim_{t\to  {t_0}^+ }\big(\|u(\cdot,t;t_0)-u_0\|_{ \infty}+\|v(\cdot,t;t_0)-v_0\|_{ \infty}\big)=0.
\end{equation}
 Moreover,  for any $\epsilon>0$, there is { $T(u_0,v_0,\epsilon)\ge 0$}  such that
\[0\leq u(x,t;t_0) \leq \bar A_1+\epsilon  \quad {\rm and}\quad 0\leq v(x,t;t_0)\leq \bar A_2+\epsilon  \]
for all $t\ge t_0+T(u_0,v_0,\epsilon)$, where
\begin{equation}
\label{I1-I2-overbar}
\bar A_1= \frac{a_{0,\sup}}{a_{1,\inf}-\frac{k\chi_1}{d_3}},\quad \bar A_2=\frac{b_{0,\sup}}{b_{2,\inf}-\frac{l\chi_2}{d_3}}.
\end{equation}
If $u_0\le \bar A_1+\epsilon$, $v_0\le \bar A_2+\epsilon$, then $T(u_0,v_0,\epsilon)$ can be chosen to be zero.

\item[(2)] Assume that { (H2)} holds. Then for any $t_0\in\RR$ and   $u_0,v_0 \in { C^+(\bar{\Omega})}$,
$\eqref{u-v-w-eq00}{ +\eqref{ic}}$ has a unique  bounded  global  classical solution $(u(x,t;t_0),v(x,t;t_0)$,$w(x,t;t_0))$  which satisfies \eqref{global-existence-eq00}.  Moreover,  for any $\epsilon>0$, there is { $T(u_0,v_0,\epsilon)>0$} such that
$$0\leq u(x,t;t_0) \leq \bar B_1+\epsilon
\quad {\rm and}\quad  0\leq v(x,t;t_0)\leq \bar B_2+\epsilon
$$
for all $t\ge t_0+T(u_0,v_0,\epsilon)$, where
 \begin{equation}
 \label{A1-overbar-0}\bar {B_1}=\frac{a_{0,\sup}(b_{2,\inf}-\frac{l\chi_2}{d_3})+\frac{l\chi_1}{d_3}b_{0,\sup}}{(a_{1,\inf}-\frac{k\chi_1}{d_3})(b_{2,\inf}-\frac{l\chi_2}{d_3})
 -\frac{lk\chi_1\chi_2}{d_3^2}}
 \end{equation}
and
\begin{equation}
\label{A2-overbar-0}
\bar {B_2}=\frac{b_{0,\sup}(a_{1,\inf}-\frac{k\chi_1}{d_3})+\frac{k\chi_2}{d_3}a_{0,\sup}}{(a_{1,\inf}-\frac{k\chi_1}{d_3})(b_{2,\inf}
-\frac{l\chi_2}{d_3})-\frac{lk\chi_1\chi_2}{d_3^2}}.
\end{equation}
If $u_0\le \bar B_1+\epsilon$, $v_0\le \bar B_2+\epsilon$, $T(u_0,v_0,\epsilon)$ can be chosen to be zero.

\item[(3)]Assume
 { (H3)} holds. Then for any $t_0\in\RR$ and $u_0,v_0 \in { C^+(\bar{\Omega})}$,  system  $\eqref{u-v-w-eq00}{ +\eqref{ic}}$ has a unique  bounded  global
  classical solution $(u(x,t;t_0)$, $v(x,t;t_0)$, $w(x,t;t_0))$ which satisfies \eqref{global-existence-eq00}.  Moreover,
\[0\leq\int_{\Omega}u(x,t;t_0)dx \leq \max\left\{\int_{\Omega}u_0,\frac{a_{0,\sup}}{a_{1,\inf}} \right\} \]
and
\[0\leq\int_{\Omega}v(x,t;t_0)\leq \max\left\{\int_{\Omega}v_0,\frac{b_{0,\sup}}{ b_{2,\inf}} \right\} \]
for all $t\ge t_0$.
\end{itemize}
\end{theorem}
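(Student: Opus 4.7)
The plan is to substitute the elliptic equation $\Delta w = (\lambda w - ku - lv)/d_3$ into the chemotactic terms, recasting the system as a pair of drift-diffusion-reaction equations coupled only through the reaction and through the drift field $\nabla w$. Explicitly,
$$-\chi_1 \nabla \cdot (u\nabla w) = -\chi_1 \nabla u \cdot \nabla w + \tfrac{\chi_1}{d_3}(ku^2 + luv - \lambda uw),$$
so the $u$-equation rewrites as $u_t = d_1\Delta u - \chi_1 \nabla u\cdot\nabla w + u\big[a_0 - (a_1 - \tfrac{k\chi_1}{d_3})u - (a_2 - \tfrac{l\chi_1}{d_3})v - \tfrac{\lambda\chi_1}{d_3}w\big]$, and symmetrically for $v$. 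Local existence, uniqueness and nonnegativity of classical solutions come from Lemma~\ref{lm-local-001} together with the parabolic maximum principle, while $w \geq 0$ follows from the elliptic maximum principle. The proof then reduces to obtaining a priori bounds that preclude finite-time blow-up in each of the three settings.

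For Part (1) under (H1), the modified reaction is bounded above by $u\big[a_{0,\sup} - (a_{1,\inf} - k\chi_1/d_3)u\big]$ because $v, w \geq 0$ and $a_{2,\inf} \geq l\chi_1/d_3$. Comparing $u$ with the spatially constant supersolution $U(t)$ solving the logistic ODE $\dot U = U[a_{0,\sup} - (a_{1,\inf} - k\chi_1/d_3)U]$, started at $\max\{\|u_0\|_\infty, \bar A_1\}$, yields $u(\cdot,t) \leq U(t)$ with $U(t) \to \bar A_1$, from which the claim about $T(u_0, v_0, \epsilon)$ follows; the argument for $v$ is symmetric. For Part (2) under (H2), the sign of $a_2 - l\chi_1/d_3$ is indeterminate, so I would couple the two equations and compare with the planar ODE system
$$\dot U = U\big[a_{0,\sup} - (a_{1,\inf} - \tfrac{k\chi_1}{d_3})U + \tfrac{l\chi_1}{d_3}V\big], \quad \dot V = V\big[b_{0,\sup} - (b_{2,\inf} - \tfrac{l\chi_2}{d_3})V + \tfrac{k\chi_2}{d_3}U\big].$$
Because the off-diagonal terms are nonnegative, the system is quasi-monotone and a comparison principle for cooperative systems transfers the ODE bounds to the PDE; the determinant condition in (H2) is exactly what makes $(\bar B_1, \bar B_2)$ the unique positive equilibrium and globally attracting on the positive quadrant, yielding the claimed bounds.

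For Part (3) under (H3), the $L^1$ bound is immediate: integrating the $u$-equation over $\Omega$ annihilates both the Laplacian and the chemotactic divergence via the Neumann condition, and the Cauchy--Schwarz inequality gives $\frac{d}{dt}\int_\Omega u \leq a_{0,\sup}\int_\Omega u - \tfrac{a_{1,\inf}}{|\Omega|}\big(\int_\Omega u\big)^2$, from which the stated bound follows by ODE comparison. For global existence I would test the $u$-equation against $u^{p-1}$, integrate by parts, and use the elliptic identity to replace $\int u^p\Delta w$ by $(d_3)^{-1}\int u^p(\lambda w - ku - lv)$, obtaining
$$\tfrac{1}{p}\tfrac{d}{dt}\int u^p + d_1(p-1)\int u^{p-2}|\nabla u|^2 \leq \int u^p\Big[a_{0,\sup} - \big(a_{1,\inf} - \tfrac{k\chi_1(p-1)}{pd_3}\big)u - \big(a_{2,\inf} - \tfrac{l\chi_1(p-1)}{pd_3}\big)v\Big].$$
Hypothesis (H3) is precisely the statement that at $p = n/2$ the coefficients of $u^{p+1}$ and $u^p v$ are nonpositive, so the right-hand side reduces to $a_{0,\sup}\int u^p$; combined with the $L^1$ bound and Gagliardo--Nirenberg interpolation this closes a uniform $L^{n/2+\delta}$ bound, and a Moser--Alikakos iteration (or semigroup smoothing exploiting that $\nabla w$ gains one derivative over $(u,v)$ via $W^{2,q}$ elliptic regularity) then bootstraps to $L^\infty$. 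The main obstacle is this last step: closing the $L^p$ dissipation right at the critical exponent $n/2$ and then propagating to $L^\infty$ requires careful handling of the drift $-\chi_1\nabla u\cdot\nabla w$ and of the cross-coupling through $w$, whereas the comparison arguments for Parts (1) and (2) become routine once the chemotactic term has been absorbed into the reaction.
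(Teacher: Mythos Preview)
Your proposal is correct and follows essentially the same route as the paper. Part (1) is identical: rewrite the chemotactic divergence via $\Delta w=(\lambda w-ku-lv)/d_3$, use (H1) to drop the $v$ and $w$ terms, and compare with the scalar logistic ODE. Part (3) matches too: the paper simply cites \cite{ITBRS17} for the $L^p$ energy estimate and bootstrap, which is precisely the test-by-$u^{p-1}$ argument you outline (your observation that $(p-1)/p=(n-2)/n$ at $p=n/2$ is exactly the origin of the constant in (H3)).

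The only presentational difference is in Part (2). You go directly to the two-dimensional cooperative comparison system for $(U,V)$, whereas the paper first introduces the four-dimensional ODE \eqref{ode00} for $(\bar u,\underline u,\bar v,\underline v)$, proves an order-preservation lemma (Lemma \ref{lem-1-ode00}), cites \cite{ITBRS17} for the sandwich $\underline u\le u\le \bar u$, and only then (in the proof of Lemma \ref{lem-1-ode00}(ii)) drops the lower-bound variables to obtain exactly your 2D cooperative system. Your shortcut is legitimate because for the global-existence upper bound the $\underline u,\underline v$ terms enter the $\bar u,\bar v$ equations with a nonpositive sign and can be discarded; the paper's 4D framework is more elaborate but buys nothing extra here. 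One point to make explicit in your write-up: the phrase ``a comparison principle for cooperative systems transfers the ODE bounds to the PDE'' hides a small argument, since the PDE carries the drift $-\chi_1\nabla u\cdot\nabla w$ and the cross-term $-(a_2-l\chi_1/d_3)v$; you should either run the touching-point argument (where $\nabla u=0$ and $v\le V$ at the first contact time) or perturb $a_{0,\sup}\mapsto a_{0,\sup}+\epsilon$ to get a strict supersolution, as the paper does implicitly via its citation.
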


\begin{remark}
\label{rk-1}
\begin{itemize}
\item[(1)] Under the assumption (H1), $(\bar A_1,\bar A_2)$ is the unique positive equilibrium of
the following decoupled system,
$$
\begin{cases}
u_t=u\big(a_{0,\sup}-(a_{1,\inf}-\frac{k\chi_1}{d_3}) u\big)\cr
v_t=v\big(b_{0,\sup}-(b_{2,\inf}-\frac{l\chi_2}{d_3})v\big).
\end{cases}
$$
Under the assumption (H2), $(\bar B_1,\bar B_2)$ is the unique positive equilibrium of the following cooperative system,
\begin{equation*}
\begin{cases}
u_t=u\big( a_{0,\sup}-(a_{1,\inf}-k\frac{\chi_1}{d_3}) u+l\frac{\chi_1}{d_3} v\big)\\
v_t=v\big( b_{0,\sup}-(b_{2,inf}-l\frac{\chi_2}{d_3})v+k\frac{\chi_2}{d_3}u\big).
\end{cases}
\end{equation*}

\item[(2)] Conditions {(H1)}, {(H2)} and {(H3)} are natural in the sense that when   no chemotaxis is present, i.e.,  $\chi_1=\chi_2=0,$ conditions {(H1)} and {(H2)} become the trivial conditions $a_{1,\inf}>0$ and $b_{2,\inf}>0$ while  {(H3)} becomes $a_{1,\inf}>0,$ $a_{2,\inf}>0,$ $b_{1,\inf}>0,$ and $b_{2,\inf}>0.$

\item[(3)] By {(H3)}, finite time blow up cannot happen when $n=1$ or $n=2$.  In general, it remains open whether
for any ${ t_0} \in\RR$ and $u_0,v_0\in { C^+(\bar\Omega), (u(x,t;t_0),v(x,t;t_0),w(x,t;t_0)) }$ { which satisfies \eqref{ic} } exists for all $t\ge t_0$.

\item[(4)] It is proved in \cite{ITBWS16} that,  under the assumption (H1), (H2), or (H3),  there are semitrivial entire solutions $(u^*(x,t),0,w_u^*(x,t))$ and
$(0,v^*(x,t),w_v^*(x,t))$ of \eqref{u-v-w-eq00} with
$$
\inf_{t\in\RR,x\in\bar\Omega} u^*(x,t)>0,\quad \inf_{t\in\RR,x\in\bar\Omega}v^*(x,t)>0.
$$
In the absence of chemotaxis (i.e. $\chi_1=\chi_2=0$), such semitrivial solutions are unique.

\item[(5)]
The condition of global existence and boundedness of classical solutions in \cite[Theorem 1.1(1)]{ITBRS17} implies {(H2)}. Therefore Theorem \ref{thm-global-000}(2) is an improvement of the global existence result in \cite[Theorem 1.1(1)]{ITBRS17}. { Notice also that  when $d_3=l=1,$ $a_1=\mu_1,$ $b_2=\mu_2,$ (H2) coincide with  the boundedness condition in \cite[Lemma 2.2]{STW13}. Thus {(H2)} is a generation of the global existence condition in \cite{STW13}. }
\end{itemize}
\end{remark}

We now investigate the persistence, coexistence, and extinction dynamics of \eqref{u-v-w-eq00} under the assumption (H1), (H2), or (H3).
A solution $(u(x,t),v(x,t),w(x,t))$ of \eqref{u-v-w-eq00} defined for all $t\in\RR$ is called an {\it entire solution}.
A {\it coexistence state} of \eqref{u-v-w-eq00} is an entire positive solution $(u^{**}(x,t),v^{**}(x,t),w^{**}(x,t))$ with
\begin{equation*}
\label{coexistence-eq}
\inf_{t\in\RR,x\in\bar\Omega} u^{**}(x,t)>0,\quad \inf_{t\in\RR,x\in\bar\Omega} v^{**}(x,t)>0.
\end{equation*}
We say that {\it persistence} occurs in \eqref{u-v-w-eq00} if there is $\eta>0$ such that for any $u_0,v_0\in { C^0(\bar\Omega)}$ with $u_0> 0$ and  $v_0> 0$, there is $\tau(u_0,v_0)>0$ such that
\begin{equation*}
\label{persistence-eq}
u(x,t;t_0,u_0,v_0)\ge \eta,\quad v(x,t;t_0,u_0,v_0)\ge \eta\quad \forall\,\, x\in\bar\Omega, \,\, t\ge t_0+\tau(u_0,v_0),\,\, t_0\in\RR.
\end{equation*}
We say that {\it extinction of one species}, {say,  species $u$}, occurs in \eqref{u-v-w-eq00} if for any $t_0\in\RR$ and { $u_0,v_0\in C^+(\bar\Omega)$ with $u_0\not \equiv 0$ and  $v_0\not \equiv  0$, there holds
\begin{equation*}
\label{extinction-eq1}
\lim_{t\to\infty} \|u(\cdot,t+t_0;t_0,u_0,v_0)\|_{\infty}=0, \,\, {\rm and}\,\,  \liminf_{t\to\infty}\|v(\cdot,t;t_0,u_0,v_0)\|_\infty>0.
\end{equation*}
}

To  state our results on  persistence and coexistence, we further introduce the following assumptions.

\medskip
\noindent {\bf (H4)} {\it $a_i(t,x)$, $b_i(t,x)$, $\chi_i$ and  $d_3$, $k$ and $l$ satisfy (H1) and
 \begin{equation}
\label{invariant-rectangle-cond-eq}
a_{0,\inf}>a_{2,\sup}\bar A_2\,\,\,\,\,\text{and} \,\,  \,\,\, b_{0,\inf}>b_{1,\sup}\bar A_1.
 \end{equation}}

\noindent {\bf (H5)} {\it $a_i(t,x)$, $b_i(t,x)$, $\chi_i$ and  $d_3$, $k$ and $l$ satisfy (H2) and
 \begin{equation}
\label{invariant-rectangle-cond-eq}
a_{0,inf}>\big(a_{2,\sup}-\frac{\chi_1 l}{d_3}\big)_+\bar B_2+\frac{\chi_1 l}{d_3}\bar B_2\,\,\, \,\,\text{and} \,\,\,\,\,  b_{0,inf}> \big(b_{1,\sup}
-\frac{\chi_2 k}{d_3}\big)_+\bar B_1+\frac{\chi_2 k}{d_3}\bar B_1,
 \end{equation}
 where $(\cdots)_+$ represents the positive part of the expression inside the brackets.
}

\medskip

We have the following theorem  on the persistence in  \eqref{u-v-w-eq00}.

\begin{theorem} [Persistence]
\label{thm-entire-001}
\begin{itemize}
\item[(1)]
 Assume (H4). Then there are $\underbar A_1>0$ and $\underbar A_2>0$ such that for  any $\epsilon>0$ and  $u_0,v_0\in { C^+(\bar \Omega)}$ with  $u_0,v_0\not \equiv 0$, there exists $t_{\epsilon,u_0,v_0}$ such that
\begin{equation}\label{attracting-set-eq00}
\underbar A_1 \le u(x,t;t_0,u_0,v_0) \le \bar{A_1}+\epsilon,\,\,\,\underbar A_2 \le v(x,t;t_0,u_0,v_0) \le \bar{A_2}+\epsilon
\end{equation}
for all $x\in\bar\Omega$, $t\ge t_0+t_{\epsilon,u_0,v_0}$, and $t_0\in\RR$.

\item[(2)]
 Assume (H5). Then there are $\underbar B_1>0$ and $\underbar B_2>0$ such that  for  any $\epsilon>0$  and $u_0,v_0\in { C^+(\bar \Omega)}$ with  $u_0,v_0\not \equiv 0$, there exists $t_{\epsilon,u_0,v_0}$ such \eqref{attracting-set-eq00}  holds with $\underbar A_1$,
  $\bar A_1$, $\underbar A_2$, and $\bar A_2$ being replaced by $\underbar B_1$, $\bar B_1$,   $\underbar B_2$, and  $\bar B_2$, respectively.

\end{itemize}
\end{theorem}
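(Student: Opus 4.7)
The upper bounds in \eqref{attracting-set-eq00} are supplied by Theorem \ref{thm-global-000}, so the substance of the statement lies in constructing the positive lower bounds $\underbar A_1,\underbar A_2$ (resp.\ $\underbar B_1,\underbar B_2$). The plan is to establish (1) in detail and then obtain (2) by the same scheme, with (H1) replaced by (H2) and the attractor $(\bar A_1,\bar A_2)$ of the decoupled ODE in Remark \ref{rk-1}(1) replaced by the cooperative-system attractor $(\bar B_1,\bar B_2)$.

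First, I would pick $\epsilon>0$ so small that the strict inequalities in (H4) survive enlargement of $\bar A_j$ by $\epsilon$, i.e., $a_{0,\inf}-a_{2,\sup}(\bar A_2+\epsilon)>0$ and $b_{0,\inf}-b_{1,\sup}(\bar A_1+\epsilon)>0$. By Theorem \ref{thm-global-000}(1) there is $T_1=T(u_0,v_0,\epsilon)$ so that $(u,v)$ enters the absorbing rectangle $[0,\bar A_1+\epsilon]\times[0,\bar A_2+\epsilon]$ for $t\ge t_0+T_1$; I restart the clock there. Since $u_0,v_0\not\equiv 0$, the strong parabolic maximum principle and Hopf's lemma, applied to the linear equations of the form $\phi_t-d\Delta\phi+\chi\nabla w\cdot\nabla\phi-c(x,t)\phi=0$ (with bounded coefficient $c$) that $u$ and $v$ satisfy, force both to be strictly positive on $\bar\Omega$ for every $t>t_0$, and parabolic Harnack converts this into a positive (though possibly small) spatial infimum at any fixed later time.

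For the uniform positive lower bound, I would first eliminate $\Delta w$ from the reaction. Substituting $d_3\Delta w=\lambda w-ku-lv$ and expanding $\nabla\cdot(\phi\nabla w)=\nabla\phi\cdot\nabla w+\phi\Delta w$ recasts the first two equations of \eqref{u-v-w-eq00} as
\begin{align*}
u_t &= d_1\Delta u-\chi_1\nabla u\cdot\nabla w+u\Big[a_0-\tfrac{\chi_1\lambda}{d_3}w-\big(a_1-\tfrac{\chi_1 k}{d_3}\big)u-\big(a_2-\tfrac{\chi_1 l}{d_3}\big)v\Big],\\
v_t &= d_2\Delta v-\chi_2\nabla v\cdot\nabla w+v\Big[b_0-\tfrac{\chi_2\lambda}{d_3}w-\big(b_1-\tfrac{\chi_2 k}{d_3}\big)u-\big(b_2-\tfrac{\chi_2 l}{d_3}\big)v\Big],
\end{align*}
and under (H1) every bracketed coefficient is nonnegative. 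With $m_u(t):=\min_{\bar\Omega}u(\cdot,t;t_0)$, Hopf's lemma rules out a boundary minimum while at an interior minimum diffusion contributes nonnegatively and the advective term $\chi_1\nabla u\cdot\nabla w$ vanishes; Danskin/Hamilton's lemma for the Dini derivative of a spatial minimum then yields a scalar differential inequality $D^+m_u\ge m_u\big(\sigma_u(\epsilon)-C_u m_u\big)$ obtained by inserting the absorbing-box upper bounds and the elliptic $L^\infty$-estimate $\|w(\cdot,t)\|_\infty\le\lambda^{-1}(k\|u\|_\infty+l\|v\|_\infty)$ into the reaction bracket. An analogous inequality holds for $m_v$. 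Standard ODE comparison with the logistic equation attracting to $\sigma_u/C_u>0$ produces $\liminf_{t\to\infty}m_u\ge\underbar A_1>0$ and $\liminf_{t\to\infty}m_v\ge\underbar A_2>0$, uniformly in $(u_0,v_0)$.

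The main obstacle is producing $\sigma_u(\epsilon)>0$ from precisely (H4) rather than an unnaturally stronger condition: crudely inserting the $w$-bound into the rewritten reaction bracket yields the threshold $a_{0,\inf}-a_{2,\sup}\bar A_2-\tfrac{\chi_1 k}{d_3}\bar A_1$, which is weaker than (H4) when chemotaxis is strong. To close this gap one must exploit the structure more carefully, either by bootstrapping (first deriving a rough lower bound on $u$, then refining the $L^\infty$ bound on $w$ via the elliptic equation to absorb the $\chi_1 k\bar A_1/d_3$ overhang before reinserting it into the comparison), or by passing to $\omega$-limits and linearizing around the semitrivial entire solution $(0,v^*(x,t),w_v^*(x,t))$ from Remark \ref{rk-1}(4), whose associated principal Lyapunov exponent is positive precisely under (H4). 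Once the positivity of $\sigma_u,\sigma_v$ is in hand, the remainder is routine. For (2), (H2) replaces (H1) so the coefficients $a_2-\chi_1 l/d_3$ and $b_1-\chi_2 k/d_3$ need no longer be nonnegative; this is why (H5) carries the positive-part decomposition $(\cdot)_+$, and the comparison is then run against the cooperative system of Remark \ref{rk-1}(1) whose positive equilibrium is $(\bar B_1,\bar B_2)$.
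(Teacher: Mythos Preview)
Your identification of the obstacle is correct: inserting the global bounds $u\le\bar A_1+\epsilon$, $v\le\bar A_2+\epsilon$ and the resulting elliptic estimate for $w$ into the rewritten reaction bracket only yields the constant
\[
a_{0,\inf}-a_{2,\sup}(\bar A_2+\epsilon)-\tfrac{\chi_1 k}{d_3}\|u(\cdot,t)\|_\infty,
\]
and since $\|u\|_\infty$ may sit near $\bar A_1$, (H4) alone does not make this positive. Your proposed bootstrap, however, is stated backwards: a \emph{lower} bound on $u$ cannot shrink the $w$-contribution $\tfrac{\chi_1 k}{d_3}\|u\|_\infty$; what is needed is a small \emph{upper} bound on $u$ at the moments where $m_u$ threatens to vanish. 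Your $\omega$-limit/linearization alternative is a legitimate abstract-persistence route, but it is only gestured at, and for nonautonomous coefficients it requires building a skew-product semiflow and verifying boundary-repeller hypotheses that you have not supplied.

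The paper closes the gap by a different mechanism that you are missing. It runs a \emph{dichotomy on $\sup_{\bar\Omega}u$}: one first fixes a small threshold $\delta_0$ and shows (your comparison works here) that whenever $\sup u<\delta_0$ the $u$-contribution to $w$ is only $\tfrac{\chi_1 k}{d_3}\delta_0$, so the reaction bracket is strictly positive and $m_u$ is increasing on that time window. The hard complementary case $\sup u\ge\delta_0$ is handled by a separate Harnack-type compactness lemma: after a fixed waiting time $T$, any solution in the absorbing box with $\sup u\ge\delta_0$ must satisfy $\inf u\ge\underbar A_1^1$ for some universal $\underbar A_1^1>0$, proved by contradiction via a limiting argument (if $\inf u_n\to 0$ while $\sup u_n\ge\delta_0$, one extracts a limit solution violating the strong maximum principle). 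A further invariance lemma then shows that once $\inf u$ exceeds a small universal level it never drops below it at time $T$ later, and these pieces are stitched together inductively over consecutive $T$-intervals. Your single-ODE comparison for $m_u$ cannot substitute for this, because in the regime $\sup u\ge\delta_0$ the bracket in your inequality can be negative; the Harnack/compactness step is the essential ingredient your proposal lacks.
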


\begin{remark}
\label{rk-2}
\begin{itemize}
\item[(1)]  It should be pointed out that in \cite{TBJLMM16}, \cite{STW13}, \cite{TW12}, global asymptotic stability and uniqueness of coexistence states are obtained for  \eqref{u-v-eq00} when the coefficients are constants and  satisfy certain weak competition condition (see also \cite{NT13} when the system involves nonlocal terms). In such cases, the persistence follows from the asymptotic stability and uniqueness of coexistence states.
The persistence in two species chemotaxis systems without assuming the asymptotic stability of coexistence states is studied for the first time, even when the coefficients are constants.  It should be also pointed out that the authors of \cite{TaoWin} studied the persistence of a parabolic-parabolic chemotaxis system with logistic source.
The persistence in \eqref{u-v-eq00} implies the persistence of mass,  that is, if persistence occurs in \eqref{u-v-eq00}, then   for any $u_0,v_0 \in C(\bar\Omega)$ with $u_0> 0$ and  $v_0> 0$, there is $m(u_0,v_0)>0$ such that
\begin{equation*}
\int_\Omega u(x,t;t_0,u_0,v_0)dx\ge m(u_0,v_0),\quad \int_\Omega v(x,t;t_0,u_0,v_0)dx\ge m(u_0,v_0)\quad \forall\,\, t\ge t_0,\,\, t_0\in\RR.
\end{equation*}
We will study persistence in fully parabolic two species competition system with chemotaxis somewhere else.

\item[(2)] It is well known that, in the absence of  chemotaxis (i.e., $\chi_1=\chi_2=0$),
 the instability of the unique semitrivial solutions $(u^*,0)$ and $(0,v^*)$ of \eqref{u-v-eq00} implies that the persistence occurs in \eqref{u-v-eq00}. Note that both (H4) and (H5) imply
 \begin{equation}
 \label{stability-cond-1-eq1}
 a_{0,\inf} b_{2,\inf}>a_{2,\sup} b_{0,\sup},\quad b_{0,\inf} a_{1,\inf}>b_{1,\sup} a_{0,\sup},
 \end{equation}
 which implies  that  the semitrivial solutions $(u^*,0)$ and $(0,v^*)$ of \eqref{u-v-eq00} are unstable.
 When $\chi_1=\chi_2=0$,
 the conditions {(H4)} and {(H5)} coincide  and become \eqref{stability-cond-1-eq1},
  and
  $$\bar A_1=\bar B_1=\frac{a_{0,\sup}}{a_{1,\inf}},\quad \bar A_2=\bar B_2=\frac{b_{0,\sup}}{b_{2,\inf}}. $$
  Hence { Theorem} \ref{thm-entire-001} recovers  the uniform persistence result  of \eqref{u-v-eq00} in
 \cite[Theorem E(1)]{HeSh02}.

\item[(3)] The conditions {(H4)} and  {(H5)} are  sufficient conditions  for semi-trivial positive entire solutions  of \eqref{u-v-w-eq00} to be unstable.
In fact, assume (H4) or (H5) and suppose that $(u^*,0,w_u^*)$ is a semi-trivial solution of \eqref{u-v-w-eq00}. Then we have the following linearized equation of \eqref{u-v-w-eq00} at  $(u^*,0,w_u^*)$,
\begin{equation}
\label{u-v-w-lin-eq00}
\begin{cases}
u_t=d_1\Delta u-\chi_1\nabla\cdot (u^* \nabla w)-\chi_1\nabla\cdot (u \nabla w_u^*) \cr
\qquad\quad +\Big(a_0(t,x)-2a_1(t,x)u^*)u-a_2(t,x)u^*v\Big),\quad x\in \Omega\cr
v_t=d_2\Delta v-\chi_2\nabla\cdot (v \nabla w_u^*)+\Big(b_0(t,x)-b_1(t,x)u^*\Big)v,\quad x\in \Omega\cr
0=d_3\Delta w+k u+lv-\lambda w,\quad x\in \Omega \cr
\frac{\p u}{\p n}=\frac{\p v}{\p n}=\frac{\p w}{\p n}=0,\quad x\in\p\Omega.
\end{cases}
\end{equation}
Note that the second equation in \eqref{u-v-w-lin-eq00} is independent of $u$ and $w$. Assume (H4). Then
$$
u^*\le \bar A_1,\quad w_u^*\le \frac{k}{\lambda} \bar A_1
$$
 and
\begin{align*}
v_t&=d_2\Delta v-\chi_2\nabla v\cdot \nabla w_u^*-\chi_2 v \Delta w_u^* +\Big(b_0(t,x)-b_1(t,x)u^*\Big)v\\
&=d_2\Delta v-\chi_2\nabla v\cdot \nabla w_u^* +\Big(b_0(t,x)-(b_1(t,x)-\frac{\chi_2 k}{d_3})u^*-\chi_2 \frac{\lambda w_u^*}{d_3}\Big)v\\
&\ge d_2\Delta v-\chi_2\nabla v\cdot \nabla w_u^* +\Big(b_{0,\inf}-(b_{1,\sup}-\frac{\chi_2 k}{d_3})\bar A_1-\chi_2 \frac{\lambda \frac{k}{\lambda}\bar A_1}{d_3}\Big)v\\
&= d_2\Delta v-\chi_2\nabla v\cdot \nabla w_u^* +\Big(b_{0,\inf}-b_{1,\sup}\bar A_1\Big)v.
\end{align*}
This together with $b_{0,\inf}>b_{1,\sup}\bar A_1$ implies that $(u^*,0,w_u^*)$ is linearly unstable. Other cases can be proved similarly.
The proof that (H4) or (H5) implies persistence \eqref{u-v-w-eq00} is very nontrivial. To prove Theorem \ref{thm-entire-001},
we first prove five nontrivial lemmas (i.e. Lemmas \ref{persistence-lm1} to \ref{persistence-lm5}), some of which also play an important role in the study of
coexistence.

 \item[(4)] Consider the following one species parabolic-elliptic chemotaxis model,
 \begin{equation}
\begin{cases}
\label{u-w-eq00}
u_t=d_1\Delta u-\chi_1\nabla\cdot (u \nabla w)+u\Big(a_0(t,x)-a_1(t,x)u\Big),\quad x\in \Omega\cr
0=d_3\Delta w+k u-\lambda w,\quad x\in \Omega \cr
\frac{\p u}{\p n}=\frac{\p w}{\p n}=0,\quad x\in\p\Omega
\end{cases}
 \end{equation}
 and assume that
 \begin{equation}
 \label{u-w-cond}
 a_{1,\inf}>\frac{k \chi_1}{d_3}.
\end{equation}
 By the arguments of Theorem \ref{thm-entire-001}, we have  the following persistence for \eqref{u-w-eq00}, which is new.
  There is $\underbar A_1$  such that for  any $\epsilon>0,$ $t_0\in\RR,$ $u_0\in C^0(\bar \Omega)$ with $u_0\ge 0,$ and $u_0\not \equiv 0$, there exists $ t_{\epsilon,u_0}$ such that
\begin{equation}\label{attracting-set-for-u-w-eq00}
\underbar A_1 \le u(x,t;t_0,u_0) \le \bar{A_1}+\epsilon
\end{equation}
for all $x\in\bar\Omega$ and $t\ge t_0+ t_{\epsilon,u_0}$, where $(u(x,t;t_0,u_0),w(x,t;t_0,u_0))$ is the global solution of
\eqref{u-w-eq00} with $u(x,t_0;t_0,u_0)=u_0(x)$ (see Corollary \ref{u-w-cor}).
\end{itemize}
\end{remark}

The next theorem  is about the existence of coexistence states of  \eqref{u-v-w-eq00}.

\begin{theorem} [Coexistence]
\label{thm-entire-002}
\begin{itemize}
\item[(1)]
 Assume (H4). Then
there is a coexistence state $(u^{**}(x,t)$, $v^{**}(x,t)$, $ w^{**}(x,t))$ of \eqref{u-v-w-eq00}.
Moreover, the following holds.
\begin{itemize}
\item[(i)] If there is $T>0$ such that $a_i(t+T,x)=a_i(t,x),$ $b_i(t+T,x)=b_i(t,x)$ for $i=0,1,2$, then \eqref{u-v-w-eq00} has a $T$-periodic coexistence state $(u^{**}(x,t),v^{**}(x,t), w^{**}(x,t))$, that is,
 $$(u^{**}(x,t+T), v^{**}(x,t+T), w^{**}(x,t+T))=(u^{**}(x,t), v^{**}(x,t), w^{**}(x,t)).
 $$

\item[(ii)] If   $a_i(t,x)\equiv a_i(x),$ $b_i(t,x)\equiv b_i(x)$ for $i=0,1,2$, then  \eqref{u-v-w-eq00} has a steady state coexistence state
 $$(u^{**}(t,x), v^{**}(t,x),w^{**}(t,x))\equiv (u^{**}(x), v^{**}(x),w^{**}(x)).
 $$

\item[(iii)]  If  $a_i(t,x)\equiv a_i(t),$ $b_i(t,x)=b_i(t)$ for $i=0,1,2$,   then \eqref{u-v-w-eq00} has a  spatially homogeneous coexistence state
$$(u^{**}(x,t),v^{**}(x,t), w^{**}(x,t))\equiv (u^{**}(t),v^{**}(t), w^{**}(t))$$
 with $w^{**}(t)=ku^{**}(t)+lv^{**}(t)$, and if $a_i(t),$ $b_i(t)$ $(i=0,1,2)$ are periodic or almost periodic, so is   $(u^{**}(t),v^{**}(t),w^{**}(t))$.
\end{itemize}
\item[(2)]
 Assume (H5). Then there is a coexistence state $(u^{**}(x,t),v^{**}(x,t), w^{**}(x,t))$ of \eqref{u-v-w-eq00} { which satisfies (i)-(iii) of (1).}

\end{itemize}
\end{theorem}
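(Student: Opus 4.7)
The plan is to construct a coexistence state as an entire solution obtained by pulling back solutions from initial times $t_0 = -n \to -\infty$, exploiting the uniform persistence bounds of Theorem \ref{thm-entire-001} together with parabolic regularity. I will describe the argument under (H4); the argument under (H5) is identical, with $(\underbar B_1, \underbar B_2, \bar B_1, \bar B_2)$ in place of $(\underbar A_1, \underbar A_2, \bar A_1, \bar A_2)$.

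Fix any $u_0, v_0 \in C^+(\bar\Omega)$ with $u_0, v_0 \not\equiv 0$, and set $(u_n, v_n, w_n)(x,t) := (u, v, w)(x, t; -n, u_0, v_0)$. By Theorem \ref{thm-entire-001}(1), for every $\tau \in \RR$ and every sufficiently large $n$,
$$\underbar A_1 \le u_n(\cdot, \tau) \le \bar A_1 + 1, \qquad \underbar A_2 \le v_n(\cdot, \tau) \le \bar A_2 + 1.$$
Elliptic $W^{2,p}$ regularity applied to the $w$-equation yields uniform $C^{1+\alpha}$ control of $\nabla w_n$ in terms of these $L^\infty$ bounds, and parabolic Schauder theory applied to the $u_n$ and $v_n$ equations then delivers uniform $C^{2+\alpha, 1+\alpha/2}$-estimates on any compact subset of $\RR \times \bar\Omega$. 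A diagonal Arzel\`a-Ascoli extraction produces a subsequence converging locally uniformly, together with the relevant derivatives, to an entire classical solution $(u^{**}, v^{**}, w^{**})$ of \eqref{u-v-w-eq00} inheriting the strict lower bounds; this is a coexistence state.

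For part (i), with $T$-periodic coefficients, restrict the extraction to $t_n = -nT$. Uniqueness of solutions combined with the periodicity of the coefficients gives $(u_n, v_n, w_n)(\cdot, t+T) = (u_{n-1}, v_{n-1}, w_{n-1})(\cdot, t)$, and passing to the limit produces the $T$-periodicity of $(u^{**}, v^{**}, w^{**})$; alternatively one applies the Schauder fixed-point theorem to the time-$T$ Poincar\'e map on the invariant rectangular subset of $C^+(\bar\Omega)^2$ furnished by Theorem \ref{thm-entire-001}. Part (ii) follows from (i) by specializing to arbitrarily small $T$, or directly via a Schauder fixed-point argument applied to the elliptic steady-state system on the same invariant rectangle. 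For part (iii), when the coefficients depend only on $t$, starting from spatially constant data $(u_0, v_0)$ forces every $(u_n, v_n, w_n)$ to remain spatially homogeneous: the chemotaxis term $\chi_i\nabla \cdot(\cdot \nabla w)$ vanishes identically and the $w$-equation collapses to the algebraic relation $w = (ku + lv)/\lambda$. The construction thus reduces to finding a bounded positive entire solution of the planar non-autonomous ODE
$$\dot u = u\bigl(a_0(t) - a_1(t) u - a_2(t) v\bigr), \qquad \dot v = v\bigl(b_0(t) - b_1(t) u - b_2(t) v\bigr),$$
and its periodicity or almost periodicity is transferred from the coefficients by standard hull/translation arguments in the space of (almost) periodic functions.

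The main obstacle I anticipate is the compactness extraction: the chemotaxis cross-diffusion $\chi_i\nabla\cdot(u\nabla w)$ couples first derivatives of $u, v$ with second derivatives of $w$, so the bootstrap from the $L^\infty$ bounds of Theorem \ref{thm-entire-001} up to $C^{2+\alpha, 1+\alpha/2}$ regularity must be executed carefully, alternating elliptic $W^{2,p}$ estimates for $w$ with parabolic Schauder theory for $u$ and $v$. It is essential here that the persistence estimate of Theorem \ref{thm-entire-001} be \emph{uniform in} $t_0$; without uniformity the pull-back $t_0 \to -\infty$ could collapse the limit onto the semi-trivial solutions $(u^*, 0, w_u^*)$ or $(0, v^*, w_v^*)$, and the mere linear instability noted in Remark \ref{rk-2}(3) would not be enough to prevent this.
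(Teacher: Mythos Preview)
Your plan matches the paper's almost exactly: the pull-back construction from $t_0=-n$ for the general entire solution, the Schauder fixed point for the time-$T$ Poincar\'e map in the periodic case~(i), the passage to small periods $\tau_n=1/n$ for the autonomous case~(ii), and the reduction to the ODE (Lemma~\ref{persistence-lm6}) for~(iii) are precisely what the authors do. One cautionary remark: your sentence ``Part~(ii) follows from~(i) by specializing to arbitrarily small~$T$'' hides real work, because the lower bounds $\underbar A_1,\underbar A_2$ furnished by Lemmas~\ref{persistence-lm3}--\ref{persistence-lm5} depend on the chosen time-step~$T$, so the $\tau_n$-periodic fixed points could in principle degenerate as $\tau_n\to 0$; the paper rules this out by a separate argument (the $\delta_1,\delta_2$ claim in the proof of~(ii)) that uses Lemma~\ref{persistence-lm2} to show any periodic solution with small amplitude must in fact grow, and you should anticipate needing something of this kind rather than treating it as automatic. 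Also, in your alternative pull-back argument for~(i) with $t_n=-nT$, the identity reads $(u_n,v_n,w_n)(\cdot,t+T)=(u_{n+1},v_{n+1},w_{n+1})(\cdot,t)$ rather than $(u_{n-1},\dots)$, and since convergence is only along a subsequence this does not immediately yield periodicity of the limit; the Schauder route you mention (and that the paper adopts) avoids this subsequence issue cleanly.
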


\begin{remark}
\begin{itemize}
\item[(1)] By Theorem \ref{thm-entire-001}, (H4) or (H5) implies the persistence in  \eqref{u-v-w-eq00}.
  It is known that persistence in \eqref{u-v-eq00} implies the existence of a coexistence state. In the spatially homogeneous case,
   persistence in \eqref{u-v-w-eq00} also implies the existence of a coexistence state by the fact that the solutions of the following systems of ODEs are solutions of \eqref{u-v-w-eq00},
\begin{equation}
\begin{cases}
\label{u-v-w-ode}
u_t=u\big(a_0(t)-a_1(t)u-a_2(t)v\big)\cr
v_t=v\big(b_0(t)-b_1(t)u-b_2(t)v\big)\cr
0=k u+lv-\lambda w.
\end{cases}
 \end{equation}
 In general, it is very nontrivial to prove that persistence in \eqref{u-v-w-eq00}  implies the existence of a coexistence state.

 \item[(2)] As it is mentioned in Remark 1.2(1), when $\chi_1=\chi_2=0$,
 the conditions {(H4)} and {(H5)} coincide  and become \eqref{stability-cond-1-eq1}. Hence theorem \ref{thm-entire-002} recovers  the  coexistence result  for \eqref{u-v-eq00} in
 \cite[Theorem E(1)]{HeSh02}.

\item[(3)] {Under some additional condition, the stability and uniqueness of coexistence states are proved in our recent paper \cite{ITBWS17b}. The proofs are very nontrivial. To control the length,  we hence did not study the stability and uniqueness of coexistence states in this paper. }

\end{itemize}
\end{remark}

Our last theorem is about the extinction of { the species $u$}.

\begin{theorem}
\label{thm-extinction}
  Assume that
{ (H1)} or { (H2)},   and suppose furthermore that
\begin{equation}
\label{Asymp-exclusion-eq-00}
  b_{2,\inf} >2\frac{\chi_2 }{d_3}l,
\quad
 a_{2,\inf}\geq\frac{\chi_1 }{d_3}l,
\end{equation}
\begin{align}
\label{Asymp-exclusion-eq-03}
a_{2,\inf}\big(b_{0,\inf}(b_{2,\inf}-l\frac{\chi_2}{d_3})-b_{0,\sup}\frac{\chi_2}{d_3}l\big) \geq a_{0,\sup}\big((b_{2,\inf}-l\frac{\chi_2}{d_3})(b_{2,\sup}-l\frac{\chi_2}{d_3})-(l\frac{\chi_2}{d_3})^2\big),
\end{align}
and
\begin{align}\label{Asymp-exclusion-eq-04}
&\big(a_{1,\inf}-\frac{\chi_{1}k}{d_{3}}\big)\Big(b_{0,\inf}(b_{2,\inf}-\frac{l\chi_{2}}{d_{3}})-b_{0,\sup}\frac{l\chi_2}{d_3}\Big)\nonumber\\
&>\Big[{ \Big(\Big(b_{1,\sup}-k\frac{\chi_2}{d_3}\Big)_++k\frac{\chi_2}{d_3}\Big)}(b_{2,\inf}-\frac{\chi_{2}l}{d_{3}})+\frac{l\chi_2}{d_3}{ \Big(b_{1,\inf}-k\frac{\chi_2}{d_3}\Big)_-}\Big]a_{0,\sup}.
 \end{align}
Then for every $t_0 \in \mathbb{R}$ and $u_{0},v_0\in { C^{+}(\overline{\Omega})}$  with $\|v_0\|_\infty>0, $
 the unique bounded and globally defined classical solution $(u(\cdot,\cdot;t_0),v(\cdot,\cdot;t_0)$, $w(\cdot,\cdot;t_0))$ of  \eqref{u-v-w-eq00}{$+\eqref{ic}$}  satisfies
 \begin{equation}\label{MainAsym-eq-001}
 \lim_{t\to\infty}\left\|u(\cdot,t+t_0;t_0)\right\|_\infty=0,
 \end{equation}
\begin{equation}\label{MainAsym-eq-002}
\alpha\leq \liminf_{t \to \infty}(\min_{x \in \bar{\Omega}}v(x,t;t_0))\leq\limsup_{t \to \infty}(\max_{x \in \bar{\Omega}}v(x,t;t_0))\leq \beta,
\end{equation}
\begin{equation}\label{MainAsym-eq-003}
l\alpha\leq\lambda  \liminf_{t \to \infty}(\min_{x \in \bar{\Omega}}w(x,t;t_0))\leq \lambda\limsup_{t \to \infty}(\max_{x \in \bar{\Omega}}w(x,t,t_0))\leq l\beta,\quad \forall x \in \bar{\Omega} \quad t\geq t_0,
\end{equation}
where
$$\beta=\frac{b_{0,\sup}(b_{2,\sup}-l\frac{\chi_2}{d_3})-l\frac{\chi_2}{d_3}b_{0,\inf}}{(b_{2,\inf}-l\frac{\chi_2}{d_3})(b_{2,\sup}-l\frac{\chi_2}{d_3})-(l\frac{\chi_2}{d_3})^2},$$
and
 $$\alpha=\frac{b_{0,\inf}-l\frac{\chi_2}{d_3}\beta}{b_{2,\sup}-l\frac{\chi_2}{d_3}}>0.$$
 Furthermore, if there is a unique entire positive  solution $(v^*(x,t;\tilde b_0,\tilde b_2),w^*(x,t;\tilde b_0,\tilde b_2))$ of
 \begin{equation}
 \label{v-w-eq00}
 \begin{cases}
v_t=d_2\Delta v-\chi_2\nabla \cdot(v \nabla w)+v\Big(\tilde b_0(t,x)-\tilde b_2(t,x)v\Big),\quad x\in \Omega\cr
0=d_3\Delta w+lv-\lambda w,\quad x\in \Omega \cr
\frac{\p v}{\p n}=\frac{\p w}{\p n}=0,\quad x\in\p\Omega
\end{cases}
 \end{equation}
 for any $(\tilde b_0,\tilde b_2)\in H(b_0,b_2)$, where
 \begin{align*}
 H(b_0,b_2)=&\{(c_0(\cdot,\cdot),c_2(\cdot,\cdot))\,|\, \exists \, t_n\to\infty \,\, {\rm such\,\, that}\\
 &\lim_{n\to\infty}(b_0(t+t_n,x),b_2(t+t_n,x))=(c_0(t,x),c_2(t,x))\,\, {\rm locally \,\, uniformly\,\, in}\,\, (t,x)\in\RR\times\RR^N\},
 \end{align*}
 then
 \begin{equation}
 \label{MainAsym-eq-004}
 \lim_{t\to\infty} \| v(\cdot,t+t_0;t_0)-v^*(\cdot,t+t_0;b_0,b_2)\|_\infty=0.
 \end{equation}
\end{theorem}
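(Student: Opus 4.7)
The strategy is to set up a bootstrap between asymptotic envelopes for $v$ and an exponential decay estimate for $u$. First I would substitute $\Delta w=(\lambda w-ku-lv)/d_3$ from the third equation into the first two, recasting the system as
\begin{align*}
u_t &= d_1\Delta u - \chi_1\nabla u\cdot\nabla w + u\bigl[a_0 - \tfrac{\chi_1\lambda}{d_3}w - (a_1-\tfrac{\chi_1 k}{d_3})u - (a_2-\tfrac{\chi_1 l}{d_3})v\bigr],\\
v_t &= d_2\Delta v - \chi_2\nabla v\cdot\nabla w + v\bigl[b_0 - \tfrac{\chi_2\lambda}{d_3}w - (b_1-\tfrac{\chi_2 k}{d_3})u - (b_2-\tfrac{\chi_2 l}{d_3})v\bigr],
\end{align*}
so every coefficient carries the sign prescribed by (H1)/(H2) together with \eqref{Asymp-exclusion-eq-00}. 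Theorem \ref{thm-global-000} supplies boundedness of $(u,v,w)$ for all $t\ge t_0$, and elliptic comparison on the third equation with Neumann data yields the sandwich
$$ \tfrac{l}{\lambda}\min_x v(\cdot,t)\ \le\ w(x,t)\ \le\ \tfrac{1}{\lambda}\bigl(k\max_x u(\cdot,t)+l\max_x v(\cdot,t)\bigr),$$
which is the only device needed to handle the $w$-coupling at interior extrema.

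The heart of the proof is the asymptotic envelope $\alpha\le v\le\beta$ of \eqref{MainAsym-eq-002}. Writing $\bar v(t):=\max_x v(\cdot,t)$ and $\underline v(t):=\min_x v(\cdot,t)$, I would evaluate the reformulated $v$-equation at extremal points (where $\nabla v=0$ and the sign of $\Delta v$ is favorable) and insert the $w$-sandwich to obtain, with $\bar u(t):=\max_x u(\cdot,t)$,
\begin{align*}
\bar v'(t)&\le \bar v\bigl[b_{0,\sup}-\tfrac{\chi_2 l}{d_3}\underline v-(b_{2,\inf}-\tfrac{\chi_2 l}{d_3})\bar v + (b_{1,\inf}-\tfrac{\chi_2 k}{d_3})_- \bar u\bigr],\\
\underline v'(t)&\ge \underline v\bigl[b_{0,\inf}-\tfrac{\chi_2 l}{d_3}\bar v-(b_{2,\sup}-\tfrac{\chi_2 l}{d_3})\underline v - \bigl((b_{1,\sup}-\tfrac{\chi_2 k}{d_3})_+ +\tfrac{\chi_2 k}{d_3}\bigr)\bar u\bigr].
\end{align*}
Setting $A=b_{2,\inf}-l\chi_2/d_3$, $B=b_{2,\sup}-l\chi_2/d_3$, $K=l\chi_2/d_3$, the assumption $b_{2,\inf}>2l\chi_2/d_3$ from \eqref{Asymp-exclusion-eq-00} forces $AB>K^2$, so the $2\times 2$ linear system obtained by taking $\limsup$ and $\liminf$ (and provisionally assuming $\limsup\bar u=0$) inverts to give exactly $\limsup\bar v\le\beta$, $\liminf\underline v\ge\alpha$, proving \eqref{MainAsym-eq-002}; estimate \eqref{MainAsym-eq-003} then follows from the elliptic sandwich for $w$.

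The same extremal-point argument applied to the $u$-equation, combined with $w\ge l\underline v/\lambda$ and $v\ge\underline v$, produces the \emph{exact} cancellation $\tfrac{\chi_1 l}{d_3}\underline v+(a_{2,\inf}-\tfrac{\chi_1 l}{d_3})\underline v=a_{2,\inf}\underline v$ (here is where the hypothesis $a_{2,\inf}\ge\chi_1 l/d_3$ in \eqref{Asymp-exclusion-eq-00} is used), yielding
$$ \bar u'(t)\le \bar u\bigl[a_{0,\sup}-a_{2,\inf}\underline v(t)-(a_{1,\inf}-\tfrac{\chi_1 k}{d_3})\bar u(t)\bigr].$$
Condition \eqref{Asymp-exclusion-eq-03} is algebraically equivalent to $a_{2,\inf}\alpha\ge a_{0,\sup}$, so the linear coefficient is asymptotically $\le 0$; plugging in the $\underline v$-inequality above (which gives $\underline v\gtrsim\alpha-c_v\bar u$ for an explicit constant $c_v$ read off from the correction) converts this into $\bar u'\le \bar u[(a_{0,\sup}-a_{2,\inf}\alpha)+(a_{2,\inf}c_v-(a_{1,\inf}-\chi_1 k/d_3))\bar u]$, and the strict inequality \eqref{Asymp-exclusion-eq-04} is exactly the statement that the $\bar u^2$-coefficient is negative. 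This forces $\bar u(t)\to 0$ (at least $1/t$ rate from the nonlinear damping, and exponentially once the linear term becomes strictly negative), establishing \eqref{MainAsym-eq-001}. For \eqref{MainAsym-eq-004}, parabolic regularity makes $\{v(\cdot,\cdot+t_n)\}$ precompact in local uniform topology, and $u\to 0$ shows any such limit is a bounded entire positive solution of \eqref{v-w-eq00} with coefficients in $H(b_0,b_2)$; the assumed uniqueness identifies it with $v^*$, and comparing $v(\cdot,\cdot+t_n)$ with $v^*(\cdot,\cdot+t_n;b_0,b_2)$ along the same subsequence gives the claimed convergence.

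The main obstacle is closing the joint bootstrap between $\bar u$ and $\underline v$ in the presence of the chemotactic drifts $-\chi_i\nabla\cdot(\cdot\,\nabla w)$: neither the envelope $(\alpha,\beta)$ nor the decay of $u$ holds unconditionally, because the lower $\underline v$-estimate carries an $O(\bar u)$ correction while the useful $\bar u$-estimate requires $\underline v$ near $\alpha$. Verifying that \eqref{Asymp-exclusion-eq-04} provides just enough slack to make the two-dimensional iteration for $(\limsup\bar u,\liminf\underline v)$ contract to $(0,\alpha)$, all while controlling the $w$-terms only through the elliptic sandwich rather than by any direct regularity input, is the technically delicate step; I would expect to reuse the machinery developed in Lemmas \ref{persistence-lm1}--\ref{persistence-lm5} to make the extremal-point estimates rigorous.
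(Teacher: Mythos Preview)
Your setup matches the paper's Lemma~\ref{lem-extinction-01}: rewrite the reaction terms via $\Delta w=(\lambda w-ku-lv)/d_3$, use the elliptic sandwich for $w$, and pass to $L_1=\limsup\bar u$, $L_2=\limsup\bar v$, $l_2=\liminf\underline v$ via parabolic comparison (the persistence Lemmas~\ref{persistence-lm1}--\ref{persistence-lm5} play no role here). Your identification of \eqref{Asymp-exclusion-eq-03} with $a_{2,\inf}\alpha\ge a_{0,\sup}$ is correct. The gap is in how you close the bootstrap. Writing $P=a_{1,\inf}-\chi_1 k/d_3$, $Q=a_{2,\inf}$, $R=a_{0,\sup}$ and $c_v=(AC_1+KC_2)/(AB-K^2)$ (with $A,B,K,C_1,C_2$ as in your sketch), your claim that \eqref{Asymp-exclusion-eq-04} is ``exactly the statement that the $\bar u^2$-coefficient is negative'' (i.e.\ $Qc_v<P$) is false: \eqref{Asymp-exclusion-eq-04} is equivalent to $P\alpha>c_vR$, and one can satisfy all hypotheses with $Qc_v>P$. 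For instance take $\chi_1=0$, $l\chi_2/d_3=k\chi_2/d_3=1$, $b_2\equiv 3$, $b_0\equiv 3$, $b_1\equiv 3$, $a_0\equiv 1$, $a_1\equiv 3$, $a_2\equiv 2$; then $\alpha=1$, $c_v=2$, so $Qc_v=4>3=P$. In that regime your substitution---plugging $l_2\ge\alpha-c_vL_1$ into $L_1P\le R-Ql_2$---yields $L_1(P-Qc_v)\le R-Q\alpha$, which with both sides negative gives only a \emph{lower} bound on $L_1$, not $L_1=0$.

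The paper avoids this by reversing the substitution and arguing by contradiction. Assume $L_1>0$; then $L_1P\le R-Ql_2$ forces $l_2<R/Q$. Now insert $L_1\le(R-Ql_2)/P$ into $l_2\ge\alpha-c_vL_1$ to obtain $l_2(P-Qc_v)\ge P\alpha-c_vR$, whose right side is strictly positive by \eqref{Asymp-exclusion-eq-04}. If $P\le Qc_v$ this is already a contradiction (left side $\le 0$); if $P>Qc_v$ one gets $l_2\ge(P\alpha-c_vR)/(P-Qc_v)$, and a one-line computation using \eqref{Asymp-exclusion-eq-03} shows this lower bound is at least $R/Q$, contradicting $l_2<R/Q$. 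So \eqref{Asymp-exclusion-eq-04} does its work only when the substitution is carried out in this order, and the argument is a contradiction rather than a contracting iteration. Your outline for \eqref{MainAsym-eq-002}--\eqref{MainAsym-eq-003} once $L_1=0$, and the compactness argument for \eqref{MainAsym-eq-004}, agree with the paper.
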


\begin{remark}
\begin{itemize}
\item[(1)]
  \eqref{Asymp-exclusion-eq-03}  and \eqref{Asymp-exclusion-eq-04} imply
\begin{equation}
\label{extinction-cond-eq1}
\frac{a_{0,\sup}}{b_{0,\inf}}\le \frac{a_{2,\inf}}{b_{2,\sup}},\quad \frac{a_{0,\sup}}{b_{0,\inf}}< \frac{a_{1,\inf}}{b_{1,\sup}}.
\end{equation}
{To see this, we first note that \eqref{Asymp-exclusion-eq-04}  implies that
 \begin{align*}
\big(a_{1,\inf}-\frac{\chi_{1}k}{d_{3}}\big)b_{0,\inf}(b_{2,\inf}-\frac{l\chi_{2}}{d_{3}})
&>\Big[{ \Big(\Big(b_{1,\sup}-k\frac{\chi_2}{d_3}\Big)_++k\frac{\chi_2}{d_3}\Big)}(b_{2,\inf}-\frac{\chi_{2}l}{d_{3}})\Big]a_{0,\sup}\nonumber\\
&\geq b_{1,\sup}(b_{2,\inf}-\frac{\chi_{2}l}{d_{3}})a_{0,\sup}.
 \end{align*}
 Thus since $b_{2,\inf}-\frac{\chi_{2}l}{d_{3}}>0,$ we get
 $$\big(a_{1,\inf}-\frac{\chi_{1}k}{d_{3}}\big)b_{0,\inf}>b_{1,\sup}a_{0,\sup},$$ which implies
 the second inequality in \eqref{extinction-cond-eq1}.
 Second, note that \eqref{Asymp-exclusion-eq-03} implies that
 \begin{align*}
a_{2,\inf}\big(b_{0,\inf}(b_{2,\inf}-l\frac{\chi_2}{d_3})-b_{0,\sup}\frac{\chi_2}{d_3}l\big)&\geq a_{0,\sup}\big((b_{2,\inf}-l\frac{\chi_2}{d_3})b_{2,\sup}-l\frac{\chi_2}{d_3}b_{2,\inf}\big)\nonumber\\
&\geq a_{0,\sup}(b_{2,\inf}-2l\frac{\chi_2}{d_3})b_{2,\sup}.
\end{align*}
%
This together with the fact that $a_{2,\inf}\big(b_{0,\inf}(b_{2,\inf}-l\frac{\chi_2}{d_3})-b_{0,\sup}\frac{\chi_2}{d_3}l\big)\leq a_{2,\inf}b_{0,\inf}(b_{2,\inf}-2l\frac{\chi_2}{d_3}) $  implies that
$$
a_{2,\inf}b_{0,\inf}(b_{2,\inf}-2l\frac{\chi_2}{d_3}) \geq a_{0,\sup}(b_{2,\inf}-2l\frac{\chi_2}{d_3})b_{2,\sup},
$$
which combines with $b_{2,\inf}-2l\frac{\chi_2}{d_3}>0$ implies the first inequality in \eqref{extinction-cond-eq1}.}

\item[(2)]
 When $\chi_1=\chi_2=0$, \eqref{Asymp-exclusion-eq-00} becomes
$$
b_{2,\inf}>0,\quad a_{2,\inf}>0,\quad a_{1,\inf}>0;
$$
\eqref{Asymp-exclusion-eq-03} and  \eqref{Asymp-exclusion-eq-04} become
$$
\frac{a_{0,\sup}}{b_{0,\inf}}\le \frac{a_{2,\inf}}{b_{2,\sup}}\quad {\rm and}\quad
\frac{a_{0,\sup}}{b_{0,\inf}}< \frac{a_{1,\inf}}{b_{1,\sup}},
$$
respectively.
Therefore, the extinction results for \eqref{u-v-eq00} in \cite{HeSh02} are recovered.

\item[(3)] When the coefficients are constants, Theorem \ref{thm-extinction} coincide with the exclusion Theorem in  \cite[Theorem 1.4]{ITBRS17}. Thus Theorem \ref{thm-extinction} give a  natural extension to the  phenomenon of exclusion  in heterogeneous media.

    \item[(4)] The reader is referred to \cite{ITBWS16} for the existence and uniqueness of positive entire solutions of \eqref{v-w-eq00}.
\end{itemize}

\end{remark}

{The results established in this paper provide  various conditions  under which
 persistence or extinction occurs.   All the conditions depend on the chemotaxis sensitivity coefficients $\chi_1$ and $\chi_2$, which reflect some effects  of chemotaxis on the persistence and extinction of the system.
 However,  it remains open whether  chemotaxis makes species easier to persist or go extinct. This is a very interesting issue and we plan to study  it somewhere else.
}

The rest of the paper is organized as follows. In section 2, we study the global existence of classical solutions and prove Theorem \ref{thm-global-000}. Section 3 is devoted to the study of the persistence and boundedness  of  classical  solutions. It is here that we present the proof of Theorem \ref{thm-entire-001}.
In section 4, we study the existence of coexistence states and prove Theorem \ref{thm-entire-002}.
Finally in section 5, we study the phenomenon of exclusion and prove Theorem \ref{thm-extinction}.

\medskip

\section{Global existence of bounded classical solutions}
In this section, we study the existence of bounded classical solutions of system \eqref{u-v-w-eq00} and prove Theorem \ref{thm-global-000}. We start with the following important result on the local existence of classical solutions of system \eqref{u-v-w-eq00} with  initial  functions in ${ C^+(\bar{\Omega}).}$
\begin{lemma}
\label{lm-local-001}
 For any given $t_0 \in \mathbb{R},$ $u_0,v_0 \in { C^+(\bar{\Omega})}$,  there exists $T_{\max}(t_0,u_0,v_0) \in (0,\infty]$  such that $\eqref{u-v-w-eq00}{ +\eqref{ic}}$  has a unique nonnegative classical solution
$(u(x,t;t_0)$, $v(x,t;t_0)$, $w(x,t;t_0))$ on $(t_0,t_0+T_{\max}(t_0,u_0,v_0))$ satisfying that
$$\lim_{t \nearrow t_0}\|u(\cdot,t;t_0)-u_0\|_{\infty}=0,\quad
\lim_{t\nearrow
t_0}\|v(\cdot,t;t_0)-v_0\|_{ \infty}=0, $$
 and
moreover if $T_{\max}(t_0,u_0,v_0)< \infty,$ then
\begin{equation}
\label{local-eq00}
\limsup_{t \nearrow T_{\max}(t_0,u_0,v_0)}\left( \left\| u(\cdot,t_0+t;t_0) \right\|_{\infty} +\left\| v(\cdot,t_0+t;t_0) \right\|_{ \infty}\right) =\infty.
\end{equation}
\end{lemma}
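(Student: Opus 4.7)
My plan is to reduce the parabolic-parabolic-elliptic system \eqref{u-v-w-eq00} to a semilinear parabolic system in $(u,v)$ alone, solve the reduced system locally by a Banach fixed-point argument in a continuous-function space, and then derive the blow-up criterion \eqref{local-eq00} from a standard continuation argument.

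First, I would eliminate $w$ by inverting the third equation. For $(u,v)\in C(\bar\Omega)^2$, the Neumann problem $-d_3\Delta w+\lambda w=ku+lv$ has a unique solution $w=W(u,v)\in W^{2,p}(\Omega)$ for each $p\in(1,\infty)$ by classical elliptic regularity, and $W$ is a bounded linear operator. Choosing $p>n$ we have $\nabla W(u,v)\in C^\alpha(\bar\Omega)$ with norm controlled by $\|u\|_\infty+\|v\|_\infty$. Expanding $\nabla\cdot(u\nabla w)=\nabla u\cdot\nabla w+u\Delta w$ and substituting $\Delta w=(\lambda w-ku-lv)/d_3$, the first two equations of \eqref{u-v-w-eq00} become
\[
u_t=d_1\Delta u-\chi_1\nabla w\cdot\nabla u+u\,F_1(t,x,u,v,w),\qquad v_t=d_2\Delta v-\chi_2\nabla w\cdot\nabla v+v\,F_2(t,x,u,v,w),
\]
with $w=W(u,v)$ and $F_i$ polynomial in $(u,v,w)$ with bounded smooth coefficients. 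The key point is that only first derivatives of $w$ now appear and the principal part of the system is decoupled.

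Second, I would set up the contraction. Fix $R>\|u_0\|_\infty+\|v_0\|_\infty$ and consider, for $T>0$ to be chosen small,
\[
\mathcal{B}_{T,R}=\bigl\{(\hat u,\hat v)\in C([t_0,t_0+T];C(\bar\Omega))^2:\|\hat u\|_\infty,\|\hat v\|_\infty\le R\bigr\}.
\]
Define $\Phi(\hat u,\hat v)=(u,v)$ by letting $(u,v)$ solve the two decoupled linear equations obtained from the displayed system by freezing the drift $\nabla w$ and the zero-order coefficient $F_i$ at $\bigl(\hat u,\hat v,W(\hat u,\hat v)\bigr)$. Representing the solutions by variation of constants against the Neumann heat semigroups $\{e^{td_i\Delta}\}$ on $C(\bar\Omega)$, and controlling the transport term via the smoothing estimates $\|e^{td_i\Delta}\nabla\cdot\phi\|_\infty\lesssim t^{-1/2-n/(2q)}\|\phi\|_q$ with $\nabla W(\hat u,\hat v)\in L^q$, one checks in a standard way that $\Phi$ maps $\mathcal{B}_{T,R}$ into itself and is a contraction in $C([t_0,t_0+T];C(\bar\Omega))^2$ for $T$ small depending only on $R$. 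The fixed point is a mild solution, and classical parabolic Schauder estimates applied in interior cylinders promote it to a classical solution in $C^{2,1}((t_0,t_0+T)\times\bar\Omega)$, with corresponding classical $w=W(u,v)$. Nonnegativity of $u,v$ then follows from the parabolic weak maximum principle applied to each equation separately, with the other unknown and $w$ frozen as bounded coefficients and $0$ as a subsolution.

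Third, the maximal existence time $T_{\max}(t_0,u_0,v_0)$ is obtained by taking the union of all such local solutions via the usual Zorn argument. The blow-up alternative \eqref{local-eq00} follows because the existence time $T$ produced above depends only on $R>\|u_0\|_\infty+\|v_0\|_\infty$: if $M:=\limsup_{t\nearrow T_{\max}}(\|u(\cdot,t;t_0)\|_\infty+\|v(\cdot,t;t_0)\|_\infty)<\infty$, we may restart the construction at some $s<T_{\max}$ close enough to $T_{\max}$ with a time step depending only on $M$, extending the solution past $T_{\max}$ and contradicting maximality. The step I expect to be the most delicate is closing the contraction in the continuous-function norm: the drift $\chi_i\nabla W(\hat u,\hat v)\cdot\nabla u$ involves a spatial derivative of the unknown, while $\nabla W$ is merely the gradient of the elliptic solution of $(\hat u,\hat v)$, so the contraction estimate must absorb the $t^{-1/2}$-type singularity of $\nabla e^{td_i\Delta}$ together with the Lipschitz dependence of $\nabla W$ on its arguments, and this trade-off dictates the admissible choice of $T$ in terms of $R$.
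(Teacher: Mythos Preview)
Your proposal is correct and follows the standard fixed-point argument used throughout the chemotaxis literature; the paper itself gives no details and simply refers to \cite[Lemma~2.1]{STW13}, whose proof proceeds exactly as you outline: invert the elliptic equation for $w$, run a contraction in an $L^\infty$-based space via the Duhamel formula and the smoothing estimate for $e^{t\Delta}\nabla\cdot$, bootstrap to classical regularity, and deduce the blow-up alternative from the fact that the local existence time depends only on $\|u_0\|_\infty+\|v_0\|_\infty$. One small wording issue: in your definition of $\Phi$ you say $(u,v)$ ``solves'' the linear equation with frozen drift and then invoke the heat semigroup, but the heat semigroup alone does not represent solutions of a drift--diffusion equation; the cleaner (and more standard) formulation is to place the entire divergence term $\nabla\cdot(\hat u\,\nabla W(\hat u,\hat v))$ into the Duhamel integrand so that $\Phi$ is defined explicitly rather than through an auxiliary linear PDE.
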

\begin{proof}
If follows from the similar arguments as those in \cite[Lemma 2.1]{STW13}.
\end{proof}

Next, we consider the following system of ODEs induced from system \eqref{u-v-w-eq00},
\begin{equation}
\label{ode00}
\begin{cases}
\overline{u}'=\frac{\chi_1}{d_3} \overline{u}\big(k \overline {u}+l\overline v-k\underline{u}-l\underline{v}\big)+ \overline{u}\big[a_{0,\sup}(t)-a_{1,\inf}(t)\overline u
-a_{2,\inf}(t)\underline{v}\big]\\
\underline{u}'=\frac{\chi_1}{d_3} \underline{u}\big(k \underline {u}+l\underline v-k\overline{u}-l\overline{v}\big)+ \underline{u}\big[a_{0,\inf}(t)-a_{1,\sup}(t)\underline u
-a_{2,\sup}(t)\overline{v}\big]\\
\overline{v}'=\frac{\chi_2}{d_3} \overline{v}\big(k \overline {u}+l\overline v-k\underline{u}-l\underline{v}\big)+ \overline{v}\big[b_{0,\sup}(t)-b_{2,\inf}(t)\overline v
-b_{1,\inf}(t)\underline{u}\big]\\
\underline{v}'=\frac{\chi_2}{d_3} \underline{v}\big(k \underline {u}+l\underline v-k\overline{u}-l\overline{v}\big)+ \underline{v}\big[b_{0,\inf}(t)-b_{2,\sup}(t)\underline v
-b_{1,\sup}(t)\overline{u}\big].
\end{cases}
\end{equation}
For convenience, we let
\begin{align*}
&\left(\overline{u}(t),\underline{u}(t),\overline{v}(t),\underline{v}(t)\right)\\
&=\left(\overline{u}\left(t;t_0,\overline{u}_0,\underline{u}_0,\overline{v}_0,\underline{v}_0\right),\underline{u}\left(t;t_0,\overline{u}_0,\underline{u}_0,\overline{v}_0,\underline{v}_0\right),\overline{v}\left(t;t_0,\overline{u}_0,\underline{u}_0,\overline{v}_0,\underline{v}_0\right),\underline{v}\left(t;t_0,\overline{u}_0,\underline{u}_0,\overline{v}_0,
\underline{v}_0\right)\right)
\end{align*}
be the solution of \eqref{ode00}
with initial condition
\begin{align}\label{initial-ode00}
&\left(\overline{u}\left(t_0;t_0,\overline{u}_0,\underline{u}_0,\overline{v}_0,\underline{v}_0\right),\underline{u}\left(t_0;t_0,\overline{u}_0,
\underline{u}_0,\overline{v}_0,\underline{v}_0\right),\overline{v}\left(t_0;t_0,\overline{u}_0,\underline{u}_0,\overline{v}_0,\underline{v}_0\right),
\underline{v}\left(t_0;t_0,\overline{u}_0,\underline{u}_0,\overline{v}_0,
\underline{v}_0\right)\right)\nonumber\\
&=\left(\overline{u}_0,\underline{u}_0,\overline{v}_0,\underline{v}_0\right) \in \mathbb{R}^4_+.
\end{align}
Then for given $t_0\in\mathbb{R}$ and  $\left(\overline{u}_0,\underline{u}_0,\overline{v}_0,\underline{v}_0\right) \in \mathbb{R}^4_+,$ there exists $T_{\max}\left(t_0,\overline{u}_0,\underline{u}_0,\overline{v}_0,\underline{v}_0\right)>0$ such that \eqref{ode00} has a unique classical solution
$\left(\overline{u}(t),\underline{u}(t),\overline{v}(t),\underline{v}(t)\right)$ on $(t_0,t_0+T_{\max}\left(t_0,\overline{u}_0,\underline{u}_0,\overline{v}_0,\underline{v}_0\right))$ satisfying \eqref{initial-ode00}. Moreover if
$T_{\max}\left(t_0,\overline{u}_0,\underline{u}_0,\overline{v}_0,\underline{v}_0\right)<\infty,$ then
\begin{equation}\label{blow-creterion-ode00}
\limsup_{t \nearrow T_{\max}\left(t_0,\overline{u}_0,\underline{u}_0,\overline{v}_0,\underline{v}_0\right)}
\left(|\overline{u}(t_0+t)|+|\underline{u}(t_0+t)|+|\overline{v}(t_0+t)|+|\underline{v}(t_0+t)|\right)=\infty.
\end{equation}
We now state and prove the following important lemma which provides  sufficient conditions for the boundedness of classical solutions of system \eqref{ode00}.

\begin{lemma}\label{lem-1-ode00} { Let} $\left(\overline{u}(t),\underline{u}(t),\overline{v}(t),\underline{v}(t)\right)$ be the solution of \eqref{ode00} which satisfies \eqref{initial-ode00}. Then
\begin{itemize}
\item[(i)] $0\leq\underline{u}_0 \leq \overline{u}_0\quad \text{and} \quad 0\leq \underline{v}_0 \leq \overline{v}_0$ imply
 $ 0\leq\underline{u}(t )\leq \overline{u}(t) \quad \text{and} \quad 0\leq \underline{v}(t)\leq \overline{v}(t)$ for all $t \in [t_0,t_0+T_{\max}\left(\overline{u}_0,\underline{u}_0,\overline{v}_0,\underline{v}_0\right)).$

\item[(ii)] If { (H2)} holds, then $T_{\max}\left(t_0,\overline{u}_0,\underline{u}_0,\overline{v}_0,\underline{v}_0\right)=\infty$ and
 $$
\limsup_{t\to\infty} \overline{u}(t) \leq \bar B_1,\quad \limsup_{t\to\infty}\overline{v}(t)\leq  \bar B_2,
$$
 where $\bar B_1$ and $\bar B_2$ are as in \eqref{A1-overbar-0} and \eqref{A2-overbar-0}, respectively.
\end{itemize}
\end{lemma}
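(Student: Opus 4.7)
The plan is to treat (i) and (ii) in turn, exploiting the multiplicative and cooperative structure of \eqref{ode00}.

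\textbf{Part (i).} Nonnegativity is automatic: each of the four equations in \eqref{ode00} has the form $X' = X\cdot F(\cdot)$, so any component initially $\ge 0$ stays $\ge 0$ on the existence interval. For the ordering, set $p := \overline{u} - \underline{u}$, $q := \overline{v} - \underline{v}$ and compute $p', q'$ directly. The chemotaxis difference collapses cleanly to $\frac{\chi_1}{d_3}(\overline{u}+\underline{u})(kp+lq)$, and, using the decompositions $a_{0,\sup}\overline{u} - a_{0,\inf}\underline{u} = a_{0,\sup}p + (a_{0,\sup}-a_{0,\inf})\underline{u}$, $-a_{1,\inf}\overline{u}^2 + a_{1,\sup}\underline{u}^2 = -a_{1,\inf}(\overline{u}+\underline{u})p + (a_{1,\sup}-a_{1,\inf})\underline{u}^2$, and $-a_{2,\inf}\overline{u}\underline{v} + a_{2,\sup}\underline{u}\overline{v} = a_{2,\inf}(\underline{u}q - \underline{v}p) + (a_{2,\sup}-a_{2,\inf})\underline{u}\overline{v}$, the logistic difference rewrites as a linear-in-$(p,q)$ expression plus a manifestly nonnegative remainder (its three pieces are $\ge 0$ because $a_{i,\sup}\ge a_{i,\inf}$ and the factors $\underline{u}, \overline{v}$ are $\ge 0$). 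Combined with the symmetric computation for $q'$, this gives a linear system
\begin{equation*}
p' = A(t)p + B(t)q + S(t), \qquad q' = B'(t)p + A'(t)q + S'(t),
\end{equation*}
with $B, B', S, S' \ge 0$ whenever the four original quantities are $\ge 0$. Since the off-diagonal coefficients and the forcing are nonnegative (Metzler structure), the orthant $\{p\ge 0, q\ge 0\}$ is forward invariant; an $\epsilon$-perturbation of $S, S'$ together with a first-exit contradiction at any hypothetical first zero of $p_\epsilon$ or $q_\epsilon$ yields strict positivity, and then $\epsilon\to 0^+$ via continuous dependence delivers the conclusion.

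\textbf{Part (ii).} Under (H2), use (i) to guarantee $\underline{u}, \underline{v} \ge 0$ and drop the nonnegative terms $-k\underline{u} - l\underline{v}$ (chemotaxis) and $-a_{2,\inf}\underline{v}, -b_{1,\inf}\underline{u}$ (logistic) in the equations for $\overline{u}', \overline{v}'$ to obtain
\begin{align*}
\overline{u}' &\le \overline{u}\bigl[a_{0,\sup} - (a_{1,\inf} - k\chi_1/d_3)\overline{u} + (l\chi_1/d_3)\overline{v}\bigr],\\
\overline{v}' &\le \overline{v}\bigl[b_{0,\sup} - (b_{2,\inf} - l\chi_2/d_3)\overline{v} + (k\chi_2/d_3)\overline{u}\bigr].
\end{align*}
The right-hand side is precisely the cooperative Lotka--Volterra system of Remark \ref{rk-1}(1), whose unique positive equilibrium is $(\bar B_1, \bar B_2)$. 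Since the Jacobian of this system has nonnegative off-diagonal entries, the standard comparison principle for cooperative ODEs yields $(\overline{u},\overline{v}) \le (U,V)$ componentwise, where $(U,V)$ denotes the solution of the cooperative system with the same initial data.

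The remaining and, I expect, main technical obstacle is to show that $(\bar B_1, \bar B_2)$ globally attracts $(U,V)$ in the nonnegative orthant under (H2), since this both rules out blow-up via \eqref{blow-creterion-ode00} and delivers the $\limsup$ bounds. I plan to use the linear Lyapunov function $L = \alpha U + \beta V$ with $\alpha = k\chi_2/d_3$ and $\beta = l\chi_1/d_3$: expanding $\dot L$ gives a linear part in $(U,V)$ minus a quadratic form whose negative-definiteness condition reduces algebraically to $(a_{1,\inf} - k\chi_1/d_3)(b_{2,\inf} - l\chi_2/d_3) > kl\chi_1\chi_2/d_3^2$, which is exactly (H2). (When $\chi_1\chi_2 = 0$ the cooperative system decouples and the conclusion is immediate from the scalar logistic equation.) Hence $\dot L < 0$ outside a compact set, $(U,V)$ stays bounded and therefore global in time, and planar monotone dynamics drive it to the unique positive equilibrium $(\bar B_1, \bar B_2)$. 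Chaining back through the comparison yields $T_{\max}(t_0, \overline{u}_0, \underline{u}_0, \overline{v}_0, \underline{v}_0) = \infty$ together with $\limsup_{t\to\infty}\overline{u}(t) \le \bar B_1$ and $\limsup_{t\to\infty}\overline{v}(t) \le \bar B_2$.
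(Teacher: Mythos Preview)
Your proof is correct and follows essentially the same route as the paper. For (i) the paper also runs an $\epsilon$-perturbation and a first-exit contradiction (perturbing $a_{0,\sup}$, $b_{0,\sup}$ and the initial data $\overline u_0,\overline v_0$ by $\epsilon$); your explicit Metzler/cooperative rewriting of the $(p,q)$-system is exactly the computation that makes the contradiction at the first zero go through. For (ii) the paper derives the same cooperative differential inequality and then simply invokes that $(\bar B_1,\bar B_2)$ is the globally attracting equilibrium of the cooperative Lotka--Volterra comparison system; your linear Lyapunov function $L=\alpha U+\beta V$ and the appeal to planar monotone dynamics supply the details behind that invocation (and indeed reduce precisely to the inequality in (H2)), so your argument is a fleshed-out version of the paper's, not a different strategy. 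One small wording point: when $\chi_1\chi_2=0$ the comparison system does not literally decouple, but one component becomes autonomous logistic and the other is logistic with a bounded forcing, which is what you need.
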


\begin{proof}
(i) Let $\epsilon>0$ and $\left(\overline{u}_{\epsilon}(t),\underline{u}_{\epsilon}(t),\overline{v}_{\epsilon}(t),\underline{v}_{\epsilon}(t) \right)$ { be} the solution of \eqref{ode00} { with $a_{0,\sup}(t)$ and $b_{0,\sup}(t)$ being replaced by $a_{0,\sup}(t)+\epsilon$ and $b_{0,\sup}(t)+\epsilon$, respectively,
and}  satisfying \eqref{initial-ode00} with  $\overline{u}_0,$ $\overline{v}_0$ being replaced  respectively by $\overline{u}^{\epsilon}_0=\overline{u}_0+\epsilon$ and $\overline{v}^{\epsilon}_0=\overline{v}_0+\epsilon.$
 We claim first that (i) holds for $\left(\overline{u}_{\epsilon}(t),\underline{u}_{\epsilon}(t),\overline{v}_{\epsilon}(t),\underline{v}_{\epsilon}(t) \right).$ Suppose by contradiction that our claim does not hold. Then there exists $\overline{t} \in (t_0,t_0+T_{\max}\left(t_0,\overline{u}^{\epsilon}_0,\underline{u}_0,\overline{v}^{\epsilon}_0,\underline{v}_0\right))$ such that
\begin{equation}\label{proof-eq00-ode00}
0\leq\underline{u}_{\epsilon}(t ) < \overline{u}_{\epsilon}(t),\quad 0\leq\underline{v}_{\epsilon}(t)< \overline{v}_{\epsilon}(t) \, ,\forall t \in [t_0,\overline{t})
\end{equation}
and
{
$$
{\rm either}\quad \underline{u}_{\epsilon}(\overline{t} ) = \overline{u}_{\epsilon}(\overline{t})\quad {\rm or}\quad
\underline{v}_{\epsilon}(\overline{t} ) = \overline{v}_{\epsilon}(\overline{t}).
$$
Without loss of generality, assume that $\underline{u}_{\epsilon}(\overline{t} ) = \overline{u}_{\epsilon}(\overline{t})$.
Then}
 on one hand \eqref{proof-eq00-ode00} implies that
\begin{equation}\label{proof-eq01-ode00}
\left(\overline{u}_{\epsilon}-\underline{u}_{\epsilon}\right)^{'}(\overline{t})\leq 0,
\end{equation}
and on the other hand the difference between the first and the second equations of \eqref{ode00} gives
\begin{align*}
\left(\overline{u}_{\epsilon}-\underline{u}_{\epsilon}\right)^{'}(\overline{t})&=\overline{u}_{\epsilon}(\overline{t})\left\{a_{0,\sup}(\overline{t})+\epsilon-a_{0,\inf}(\overline{t})+(a_{1,\sup}(\overline{t})-a_{1,\inf}(\overline{t}))\overline{u}_{\epsilon}(\overline{t})+2l\frac{\chi_1}{d_3}\left(\overline{v}_{\epsilon}-\underline{v}_{\epsilon}\right)(\overline{t})\right\}\nonumber\\
&\qquad \qquad \qquad +\overline{u}_{\epsilon}(\overline{t})\left\{a_{2,\sup}(\overline{t})\overline{v}_{\epsilon}(\overline{t})-a_{2,\inf}(\overline{t})
\underline{v}_{\epsilon}(\overline{t})\right\}>0,
\end{align*}
which contradicts to \eqref{proof-eq01-ode00}.  Thus (i) holds for $\big(\overline{u}_{\epsilon}(t),\underline{u}_{\epsilon}(t)$, $\overline{v}_{\epsilon}(t)$, $\underline{v}_{\epsilon}(t) \big)$. Letting $\epsilon \to 0$, we have that (i) holds for $\left(\overline{u}(t),\underline{u}(t),\overline{v}(t),\underline{v}(t) \right)$.

(ii) First from the first and third equations of \eqref{ode00} we get
\begin{equation}
\label{ode01}
\begin{cases}
\overline{u}' \leq \overline{u}\Big[a_{0,\sup}-\left(a_{1,\inf}-k\frac{\chi_1}{d_3}\right)\overline u+l\frac{\chi_1}{d_3}\overline{v}\Big]\\
\overline{v}' \leq \overline{v}\Big[b_{0,\sup}-\left(b_{2,\inf}-l\frac{\chi_2}{d_3}\right)\overline v+k\frac{\chi_2}{d_3}\overline{u}\Big].
\end{cases}
\end{equation}
Thus the result follows from  comparison principle for cooperative systems
  and the fact that   $(\bar B_1, \bar B_2)$ is a uniformly asymptotically stable solution for the following system of ODEs,
\begin{equation*}
\begin{cases}
u'=u\left\{ a_{0,\sup}-(a_{1,\inf}-k\frac{\chi_1}{d_3}) u+l\frac{\chi_1}{d_3} v\right\}\\
v'=v\left\{ b_{0,\sup}-\left(b_{2,inf}-l\frac{\chi_2}{d_3}\right)v+k\frac{\chi_2}{d_3}u\right\}.
\end{cases}
\end{equation*}
\end{proof}

Now we prove Theorem \ref{thm-global-000}.

\begin{proof} [{ Proof of Theorem \ref{thm-global-000}}]

Let  $u_0,v_0 \in {C^+(\bar{\Omega})}.$

(1) From the first equation of system \eqref{u-v-w-eq00}, we have that for $t\in (t_0,t_0+T_{\max}(t_0,u_0,v_0))$,
\begin{align*}
u_t=&d_1\Delta u-\chi_1 \nabla u \cdot \nabla w+u\left\{ a_0(t,x)-\left(a_1(t,x)-k\frac{\chi_1 }{d_3}\right)u -\left(a_2(t,x)-l\frac{\chi_1 }{d_3} \right)v-\frac{\chi_1 }{d_3}\lambda w\right\}\\
& \leq d_1\Delta u-\chi_1 \nabla u \cdot \nabla w+u\left\{ a_{0,\sup}-\left(a_{1,\inf}-k\frac{\chi_1 }{d_3}k\right)u-\left(a_{2,\inf}-l\frac{\chi_1 }{d_3}\right) v - \frac{\chi_1 }{d_3}\lambda w \right\}.
\end{align*}
This together with { (H1)} gives for $t\in (t_0,t_0+T_{\max}(t_0,u_0,v_0))$,
\begin{align}
\label{aux-eq1}
u_t& \leq d_1\Delta u-\chi_1 \nabla u \cdot \nabla w+u\left\{ a_{0,\sup}-\left(a_{1,\inf}-k\frac{\chi_1 }{d_3}k\right)u \right\}.
\end{align}
{ Let $u(t;\|u_0\|_\infty)$ be the solution of the ODE
$$
u^{'}=u\left\{ a_{0,\sup}-\left(a_{1,\inf}-k\frac{\chi_1 }{d_3}k\right)u \right\}
$$
with $u(0;\|u_0\|_\infty)=\|u_0\|_\infty$.
Then $u(t;\|u_0\|_\infty)$ is increasing if $\|u_0|_\infty<\frac{a_{0,\sup}}{a_{1,\inf}-k\frac{\chi_1}{d_3}}$ and is decreasing if
$\|u_0\|_\infty>\frac{a_{0,\sup}}{a_{1,\inf}-k\frac{\chi_1}{d_3}}$,  and $u(t;\|u_0\|_\infty)$ converges to $\frac{a_{0,\sup}}{a_{1,\inf}-k\frac{\chi_1}{d_3}}$ as $t\to\infty$. }
Therefore by  comparison principle  for parabolic equations, we get
 \begin{equation}
 \label{global-u-eq}
 0\leq u(x,t;t_0,u_0,v_0)\leq \max\left\{\|u_0\|_\infty,\frac{a_{0,\sup}}{a_{1,\inf}-k\frac{\chi_1}{d_3}} \right\}\quad \forall\,\, t\in [t_0,t_0+T_{\max}(t_0,u_0,v_0)).
 \end{equation}

Similarly, the second equation of system \eqref{u-v-w-eq00} gives
  \begin{equation}
  \label{global-v-eq}
  0\leq v(x,t;t_0,u_0,v_0)\leq \max\left\{\|v_0\|_\infty,\frac{b_{0,\sup}}{b_{2,\inf}-l\frac{\chi_2}{d_3}} \right\}\quad \forall\,\, t\in [t_0,t_0+T_{\max}(t_0,u_0,v_0)).
  \end{equation}
By \eqref{local-eq00}, \eqref{global-u-eq}, and \eqref{global-v-eq}, we have $T_{\max}(t_0,u_0,v_0)=\infty$.

Moreover, by \eqref{aux-eq1} and
comparison principle for parabolic equations again,
{ for any $\epsilon>0$, there is $T_1(u_0,v_0,\epsilon)\ge 0$ such that
$$
0\le u(x,t;t_0,u_0,v_0)\le  \frac{a_{0,\sup}}{a_{1,\inf}-k\frac{\chi_1}{d_3}}+\epsilon\quad \forall\,\, x\in\bar\Omega,\,\, t\ge t_0+T_1(u_0,v_0,\epsilon),
$$
and $T_1(u_0,v_0,\epsilon)$ can be chosen to be zero if $u_0\le\bar A_1+\epsilon$.
Similarly, for any $\epsilon>0$, there is $T_2(u_0,v_0,\epsilon)\ge 0$ such that
$$
0\le v(x,t;t_0,u_0,v_0)\le \frac{b_{0,\sup}}{b_{2,\inf}-l\frac{\chi_2}{d_3}}+\epsilon \quad \forall\,\, x\in\bar\Omega,\,\, t\ge t_0+T_2(u_0,v_0,\epsilon),
$$
and $T_2(u_0,v_0,\epsilon)$ can be chosen to be zero if $v_0\le\bar A_2+\epsilon$.
(1) thus follows with $T(u_0,v_0,\epsilon)=\max\{T_1(u_0,v_0,\epsilon),T_2(u_0,v_0,\epsilon)\}$.}

(2) Let $\overline{u}_0=\max_{x \in \bar \Omega}u_0(x),$ $\underline{u}_0=\min_{x \in \bar \Omega}u_0(x),$ $\overline{v}_0=\max_{x \in \bar \Omega}v_0(x)$ , $\underline{v}_0=\min_{x \in \bar \Omega}v_0(x)$ and let $\left(\overline{u}(t),\underline{u}(t),\overline{v}(t),\underline{v}(t) \right)$ { be} the solution of \eqref{ode00} satisfying initial condition \eqref{initial-ode00}.
 By the similar arguments as those in \cite[Theorem 1.1(1)]{ITBRS17}, under the condition { (H2)} we have
\[
0\leq \underline{u}(t) \leq u(x,t) \leq \overline{u}(t) \quad \text{and} \quad  0 \leq \underline{v}(t) \leq v(x,t) \leq \overline{v}(t) \, , \forall  x \in \bar \Omega \, \,\,  t \in (t_0, t_0+T_{\max}).
\]
This together with Lemma \ref{lem-1-ode00} implies Theorem \ref{thm-global-000} (2).

(3) It follows from the similar arguments as those in \cite[Theorem 1.1(2)]{ITBRS17}.
\end{proof}

\section{Persistence}
\label{Persistence}

In this section, we study the persistence  in \eqref{u-v-w-eq00} and prove Theorem \ref{thm-entire-001}.

Fix $T>0$. We first prove five Lemmas.

\begin{lemma}
\label{persistence-lm1}
\begin{itemize}
\item[(1)] Assume (H1).
For any $\epsilon>0$, there is $\delta=\delta(\epsilon)>0$ such that for any { $u_0, v_0 \in C^+(\bar{\Omega})$, the solution $(u(x,t;t_0)$, $v(x,t;t_0)$, $w(x,t;t_0))$  of $\eqref{u-v-w-eq00}+\eqref{ic}$ satisfies the following.}
\begin{itemize}
\item[(i)] If $0\le u_0\le \delta$, then $u(x,t;t_0)\le \epsilon$ for $t\in [t_0,t_0+T]$ and $x\in\bar\Omega$.

\item[(ii)] If $0\le v_0\le \delta$, then $v(x,t;t_0)\le \epsilon$ for $t\in [t_0,t_0+T]$ and $x\in\bar\Omega$.
\end{itemize}

\item[(2)] Assume (H2).
For any $\epsilon>0$, there is $\delta=\delta(\epsilon)>0$ such that for any  { $u_0, v_0 \in C^+(\bar{\Omega})$,}
 { the solution $(u(x,t;t_0)$, $v(x,t;t_0)$, $w(x,t;t_0))$  of $\eqref{u-v-w-eq00} +\eqref{ic}$ satisfies the following.}
\begin{itemize}
\item[(i)] If $0\le u_0\le \delta$ { and  $0\le v_0\le \bar B_2+\epsilon$}, then $u(x,t;t_0)\le \epsilon$ for $t\in [t_0,t_0+T]$ and $x\in\bar\Omega$.

\item[(ii)] If $0\le v_0\le \delta$ { and $0\le u_0\le \bar B_1+\epsilon$}, then $v(x,t;t_0)\le \epsilon$ for $t\in [t_0,t_0+T]$ and $x\in\bar\Omega$.
\end{itemize}
\end{itemize}
\end{lemma}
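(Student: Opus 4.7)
My plan is to rewrite the $u$-equation using the elliptic equation for $w$ so that the chemotaxis becomes a pure drift plus a zeroth-order term, and then run a scalar comparison argument. Using $d_3\Delta w = \lambda w - ku - lv$, expand
\[
-\chi_1\nabla\cdot(u\nabla w)=-\chi_1\nabla u\cdot\nabla w+\tfrac{\chi_1}{d_3}u(ku+lv-\lambda w),
\]
so the first equation of \eqref{u-v-w-eq00} takes the form
\[
u_t=d_1\Delta u-\chi_1\nabla u\cdot\nabla w+u\Bigl[a_0-\bigl(a_1-\tfrac{\chi_1 k}{d_3}\bigr)u-\bigl(a_2-\tfrac{\chi_1 l}{d_3}\bigr)v-\tfrac{\chi_1\lambda}{d_3}w\Bigr],
\]
and analogously for $v_t$. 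Throughout, the solution lives on the local existence interval from Lemma~\ref{lm-local-001}, on which $\nabla w$ is continuous, so the scalar parabolic comparison principle applies to the operator $\partial_t-d_1\Delta+\chi_1\nabla w\cdot\nabla$ with Neumann data.

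For part (1) under (H1), all three bracket terms involving $u$, $v$, and $w$ are nonnegatively signed (since $u,v,w\ge 0$, $a_1-\tfrac{\chi_1 k}{d_3}>0$ and $a_2-\tfrac{\chi_1 l}{d_3}\ge 0$), hence
\[
u_t\le d_1\Delta u-\chi_1\nabla u\cdot\nabla w+a_{0,\sup}u\qquad\text{on }[t_0,t_0+T].
\]
The spatially constant function $\bar u(t):=\|u_0\|_\infty e^{a_{0,\sup}(t-t_0)}$ has vanishing gradient and Laplacian, satisfies the Neumann condition automatically, and is a classical supersolution. Comparison therefore yields $u(x,t;t_0)\le\|u_0\|_\infty e^{a_{0,\sup}T}$ on $[t_0,t_0+T]$, and choosing $\delta=\epsilon e^{-a_{0,\sup}T}$ gives (i). Part (ii) is the symmetric argument using the $v$-equation and $b_{0,\sup}$ (with $b_1-\tfrac{\chi_2 k}{d_3}\ge 0$, $b_2-\tfrac{\chi_2 l}{d_3}>0$).

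For part (2) under (H2), the coefficient $a_2-\tfrac{\chi_1 l}{d_3}$ may be negative, so the cross term $-(a_2-\tfrac{\chi_1 l}{d_3})v$ in the bracket need not be nonpositive; we control it by invoking the assumed a~priori bound on $v_0$. First shrink $\delta$ if necessary so that $\delta\le\bar B_1+\epsilon$. Then $u_0\le\bar B_1+\epsilon$ and $v_0\le\bar B_2+\epsilon$, so by Theorem~\ref{thm-global-000}(2) (with $T(u_0,v_0,\epsilon)=0$) we get $0\le v(x,t;t_0)\le\bar B_2+\epsilon$ on $[t_0,\infty)$. Inserting this into the rewritten equation,
\[
u_t\le d_1\Delta u-\chi_1\nabla u\cdot\nabla w+\tilde K\,u,\qquad \tilde K:=a_{0,\sup}+\bigl(\tfrac{\chi_1 l}{d_3}-a_{2,\inf}\bigr)_+(\bar B_2+\epsilon),
\]
and the same supersolution comparison with $\bar u(t)=\|u_0\|_\infty e^{\tilde K(t-t_0)}$ gives $u\le \delta e^{\tilde K T}$ on $[t_0,t_0+T]$. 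Choosing $\delta=\min\{\bar B_1+\epsilon,\ \epsilon e^{-\tilde K T}\}$ yields (i); (ii) is symmetric, using $u_0\le\bar B_1+\epsilon$, Theorem~\ref{thm-global-000}(2) to control $u$, and the analogous constant $\tilde K'=b_{0,\sup}+(\tfrac{\chi_2 k}{d_3}-b_{1,\inf})_+(\bar B_1+\epsilon)$.

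The main technical point is the validity of the scalar parabolic comparison principle in the presence of the drift $-\chi_1\nabla w\cdot\nabla u$; this is not an obstruction because $w(\cdot,t;t_0)\in C^1(\bar\Omega)$ on the local existence interval by Lemma~\ref{lm-local-001}, so the coefficient of the drift is a bounded continuous function on $\bar\Omega\times[t_0,t_0+T]$, and the spatially constant supersolution bypasses any need to track oscillations of $\nabla w$. Everything else is a bookkeeping choice of $\delta$.
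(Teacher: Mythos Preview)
Your proof is correct and follows essentially the same route as the paper: rewrite the chemotaxis term via the elliptic equation so that it splits into a drift $-\chi_1\nabla u\cdot\nabla w$ plus zeroth-order terms, drop the sign-definite pieces, and compare with the spatially constant supersolution $\|u_0\|_\infty e^{K(t-t_0)}$ to read off $\delta=\epsilon e^{-KT}$. In part~(2) you are actually a bit more careful than the paper, which simply asserts $v\le\bar B_2+\epsilon$ on $[t_0,\infty)$ by Theorem~\ref{thm-global-000}(2); your added requirement $\delta\le\bar B_1+\epsilon$ makes explicit why the waiting time $T(u_0,v_0,\epsilon)$ in that theorem can be taken to be zero.
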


\begin{proof}
(1)(i)
Assume (H1). Then
\begin{align*}
u_t&=d_1\Delta u-\chi_1 \nabla \cdot (u\nabla w)+u\Big(a_0(t,x)-a_1(t,x)u-a_2(t,x)v\Big)\\
&=d_1\Delta u-\chi_1 \nabla u\cdot\nabla w+u\Big(a_0(t,x)-(a_1(t,x)-\frac{\chi_1 k}{d_3})u-(a_2(t,x)-\frac{\chi_1 l}{d_3})v-\frac{\chi_1 \lambda}{d_3}w\Big)\\
&\le d_1\Delta u-\chi_1 \nabla u\cdot\nabla w+a_{0,\sup} u.
\end{align*}
Hence, by comparison principle for parabolic equations,
we have
$$
u(x,t;t_0)\le e^{a_{0,\sup}(t-t_0)} \|u_0\|\quad \forall\,\, t\ge t_0.
$$
(1)(i) thus follows with $\delta=\epsilon e^{-a_{0,\sup}T}$ for any given $\epsilon>0$.

(1)(ii) It can be proved by the similar arguments as in (1)(i).

(2)(i) By Theorem \ref{thm-global-000}(2),
$$
 v(x,t+t_0;t_0)\le \bar B_2+\epsilon
\quad \forall \,\, t\ge { t_0},\quad x\in\bar\Omega.
$$
Assume (H2). Then
\begin{align*}
u_t&=d_1\Delta u-\chi_1 \nabla \cdot (u\nabla w)+u\Big(a_0(t,x)-a_1(t,x)u-a_2(t,x)v\Big)\\
&=d_1\Delta u-\chi_1 \nabla u\cdot\nabla w+u\Big(a_0(t,x)-(a_1(t,x)-\frac{\chi_1 k}{d_3})u-(a_2(t,x)-\frac{\chi_1 l}{d_3})v-\frac{\chi_1 \lambda}{d_3}w\Big)\\
&\le d_1\Delta u-\chi_1 \nabla u\cdot\nabla w+\big(a_{0,\sup}+\frac{\chi_1 l}{d_3}(\bar B_2+\epsilon)\big) u.
\end{align*}
By comparison principle for parabolic equations,
we have
$$
u(x,t;t_0)\le e^{\big(a_{0,\sup}+\frac{\chi_1 l}{d_3}(\bar B_2+\epsilon)\big)(t-t_0)} \|u_0\|\quad \forall\,\, t\ge t_0.
$$
(2)(i) thus follows with $\delta=\epsilon e^{-\big(a_{0,\sup}+\frac{\chi_1 l}{d_3}(\bar B_2+\epsilon)\big)T}$ for any given $\epsilon>0$.

(2)(ii) It can be proved by the similar arguments as in (2)(i).
\end{proof}

\begin{lemma}
\label{persistence-lm2}
\begin{itemize}
\item[(1)]
{Assume  (H4). Let $\epsilon_0$ and $\delta_0=\delta_0(\epsilon_0)$ be such that Lemma
\ref{persistence-lm1}(1) holds with $\epsilon=\epsilon_0$ and $\delta=\delta_0$,
$$
a_{0,\inf}>a_{2,\sup}(\bar A_2+\epsilon_0)+\frac{\chi_1 k}{d_3}\epsilon_0,\quad
b_{0,\inf}> b_{1,\sup}(\bar A_1+\epsilon_0)+\frac{\chi_2 l}{d_3}\epsilon_0,
$$
and
\begin{align*}
\delta_0<{ \min}\Big\{\frac{a_{0,\inf}-a_{2,\sup}(\bar A_2+\epsilon_0)-\frac{\chi_1 k}{d_3}\epsilon_0}{a_{1,\sup}-\frac{\chi_1 k}{d_3}},\frac{b_{0,\inf}- b_{1,\sup}(\bar A_1+\epsilon_0)-\frac{\chi_2 l}{d_3}\epsilon_0}{b_{2,\sup}-\frac{\chi_2 l}{d_3}}\Big\}.
\end{align*}
} For given { $u_0, v_0 \in C^+(\bar{\Omega})$, the solution $(u(x,t;t_0)$, $v(x,t;t_0)$, $w(x,t;t_0))$  of $\eqref{u-v-w-eq00} +\eqref{ic}$ satisfies the following.}
\begin{itemize}
\item[(i)] If $0<u_0<\delta_0$ and  { $0\le v_0\le \bar A_2+\epsilon$}, then
$u(x,t+t_0;t_0)>\inf_{ \bar{\Omega}} u_0(x)\quad \forall \,\, 0<t\le T.
$

\item[(ii)] If $0<v_0<\delta_0$ and {  $0\le u_0\le  \bar A_1+\epsilon$}, then
$
v(x,t+t_0;t_0)>\inf_{ \bar{\Omega}} v_0(x)\quad \forall \,\, 0<t\le T.
$
\end{itemize}

\item[(2)]
{ Assume  (H5). Let $\epsilon_0$ and $\delta_0=\delta_0(\epsilon_0)$  be such that Lemma
\ref{persistence-lm1}(2) holds with $\epsilon=\epsilon_0$ and $\delta=\delta_0$,
$$
a_{0,\inf}>\big[\big(a_{2,\sup}-\frac{\chi_1 l}{d_3}\big)_++\frac{\chi_1 l}{d_3}\big](\bar B_2+\epsilon_0)+\frac{\chi_1 k}{d_3}\epsilon_0,
$$
$$
b_{0,\inf}>\big[\big(b_{1,\sup}-\frac{\chi_2 k}{d_3}\big)_++\frac{\chi_2 k}{d_3}\big](\bar B_1+\epsilon_0)+\frac{\chi_2 l}{d_3}\epsilon_0,
$$
and
\begin{align*}
\delta_0<{ \min}\Big\{&
\frac{a_{0,\inf}-\big[\big(a_{2,\sup}-\frac{\chi_1 l}{d_3}\big)_++\frac{\chi_1 l}{d_3}\big](\bar B_2+\epsilon_0)-\frac{\chi_1 k}{d_3}\epsilon_0}
{a_{1,\sup}-\frac{\chi_1 k}{d_3}},\\
& \frac{b_{0,\inf}-\big[\big(b_{1,\sup}-\frac{\chi_2 k}{d_3}\big)_++\frac{\chi_2 k}{d_3}\big](\bar B_1+\epsilon_0)-\frac{\chi_2 l}{d_3}\epsilon_0}
{b_{2,\sup}-\frac{\chi_2 l}{d_3}}\Big\}.
\end{align*}
} For given  { $u_0, v_0 \in C^+(\bar{\Omega})$,}
{ the solution $(u(x,t;t_0)$, $v(x,t;t_0)$, $w(x,t;t_0))$  of $\eqref{u-v-w-eq00} +\eqref{ic}$ satisfies the following.}
\begin{itemize}
\item[(i)] If $0<u_0<\delta_0$ and {  $0\le v_0\le \bar B_2+\epsilon$}, then
$u(x,t+t_0;t_0)>\inf_{ \bar{\Omega}} u_0(x)\quad \forall \,\, 0<t\le T.$

\item[(ii)] If $0<v_0<\delta_0$ and {  $0\le u_0\le \bar B_1+\epsilon$}, then
$v(x,t+t_0;t_0,)>\inf_{ \bar{\Omega}} v_0(x)\quad \forall \,\, 0<t\le T.$
\end{itemize}
\end{itemize}
\end{lemma}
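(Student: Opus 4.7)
The plan is: turn the reaction term in the $u$-equation into a linear lower bound $F\ge A-Bu$ with explicit positive constants $A,B$, compare $u$ with the logistic ODE $\phi'=\phi(A-B\phi)$ via parabolic comparison, and read off strict positivity above $\inf u_0$ from the monotonicity of $\phi$ (invoking the strong maximum principle in the borderline case $\inf u_0=0$). I outline the argument for part (1)(i); the remaining cases are structurally identical.

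First I collect a priori bounds valid on $[t_0,t_0+T]$. Lemma \ref{persistence-lm1}(1)(i) gives $u\le\epsilon_0$ since $u_0\le\delta_0=\delta(\epsilon_0)$; Theorem \ref{thm-global-000}(1), applied with $\epsilon=\epsilon_0$ so that $T(u_0,v_0,\epsilon_0)=0$ (as $v_0\le\bar A_2+\epsilon_0$), gives $v\le\bar A_2+\epsilon_0$; and the elliptic maximum principle applied to the third equation of \eqref{u-v-w-eq00} gives $\lambda\|w(\cdot,t)\|_\infty\le k\epsilon_0+l(\bar A_2+\epsilon_0)$. Eliminating $\Delta w$ via $d_3\Delta w=\lambda w-ku-lv$ in the expansion $-\chi_1\nabla\cdot(u\nabla w)=-\chi_1\nabla u\cdot\nabla w-\chi_1 u\Delta w$ rewrites the $u$-equation as
\[
u_t=d_1\Delta u-\chi_1\nabla u\cdot\nabla w+uF,\quad F=a_0-\bigl(a_1-\tfrac{\chi_1 k}{d_3}\bigr)u-\bigl(a_2-\tfrac{\chi_1 l}{d_3}\bigr)v-\tfrac{\chi_1\lambda}{d_3}w.
\]
Under (H1) the coefficients of $u$ and $v$ in $F$ are nonnegative, so the above bounds can be substituted termwise; a short computation in which the $\tfrac{\chi_1 l}{d_3}$ contribution from the $v$-term cancels against the one from the $w$-term produces
\[
F\ge A-Bu,\quad A:=a_{0,\inf}-a_{2,\sup}(\bar A_2+\epsilon_0)-\tfrac{\chi_1 k}{d_3}\epsilon_0,\quad B:=a_{1,\sup}-\tfrac{\chi_1 k}{d_3}.
\]
The stated conditions on $\epsilon_0$ and $\delta_0$ are precisely what is needed to ensure $A>0$ and $A-B\delta_0>0$; by (H1) also $B>0$.

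Now set $\mu:=\min_{\bar\Omega}u_0$ and let $\phi$ solve $\phi'=\phi(A-B\phi)$ with $\phi(0)=\mu$. If $\mu>0$, then $0<\mu<\delta_0<A/B$, so $\phi$ is strictly increasing on $[0,T]$ with $\phi(t)>\mu$ for every $t>0$. The difference $\psi:=u-\phi$ has $\psi(x,0)\ge 0$, homogeneous Neumann boundary conditions, and, combining $u_t\ge d_1\Delta u-\chi_1\nabla u\cdot\nabla w+u(A-Bu)$ with the $\phi$-ODE,
\[
\psi_t-d_1\Delta\psi+\chi_1\nabla\psi\cdot\nabla w\ge[A-B(u+\phi)]\psi,
\]
a linear parabolic inequality with bounded coefficient, to which the parabolic maximum principle applies; hence $\psi\ge 0$ and therefore $u(x,t+t_0;t_0)\ge\phi(t)>\mu=\inf u_0$ on $(0,T]\times\bar\Omega$. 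If instead $\mu=0$, then the hypothesis $u_0\not\equiv 0$ combined with the strong maximum principle (together with Hopf's lemma at the Neumann boundary) applied to the linear equation $u_t-d_1\Delta u+\chi_1\nabla u\cdot\nabla w-Fu=0$ gives $u(x,t+t_0;t_0)>0=\mu$ for $t\in(0,T]$.

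The cases (1)(ii), (2)(i), (2)(ii) go through the same three steps after the obvious renaming. The only extra book-keeping appears under (H5), where $a_2-\tfrac{\chi_1 l}{d_3}$ and $b_1-\tfrac{\chi_2 k}{d_3}$ may change sign; this is precisely why the $\epsilon_0$- and $\delta_0$-conditions in part (2) split those coefficients via the $(\,\cdot\,)_+$ notation and add back the $\tfrac{\chi_1 l}{d_3}$ (resp.\ $\tfrac{\chi_2 k}{d_3}$) piece coming from the elliptic bound on $w$, so that the same cancellation still yields the correct $A$ and $B$. This careful matching of constants between the lower bound on $F$ and the hypothesis on $\delta_0$ is the main technical nuisance of the proof.
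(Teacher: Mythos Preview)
Your proof is correct and follows essentially the same route as the paper: rewrite the $u$-equation via $d_3\Delta w=\lambda w-ku-lv$, use Lemma~\ref{persistence-lm1} and Theorem~\ref{thm-global-000} to bound $u,v,w$ on $[t_0,t_0+T]$, obtain the logistic lower bound $F\ge A-Bu$ with exactly the same constants, and compare with the ODE $\phi'=\phi(A-B\phi)$. The only cosmetic differences are that you spell out the comparison via the linear inequality for $\psi=u-\phi$ and treat the case $\inf u_0=0$ separately with the strong maximum principle, whereas the paper simply assumes $\inf u_0>0$ without loss of generality.
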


\begin{proof}
(1)(i) Without loss of generality, assume $\inf_{ \bar{\Omega}}u_0(x)>0$. By Theorem \ref{thm-global-000} (1),
$$
v(x,t+t_0;t_0)\le \bar A_2+\epsilon_0
\quad \forall \,\, t\ge { t_0},\quad x\in\bar\Omega.
$$
This together with Lemma \ref{persistence-lm1} (1) implies that
\begin{align*}
u_t&=d_1\Delta u-\chi_1 \nabla \cdot (u\nabla w)+u\Big(a_0(t,x)-a_1(t,x)u-a_2(t,x)v\Big)\\
&=d_1\Delta u-\chi_1 \nabla u\cdot\nabla w+u\Big(a_0(t,x)-(a_1(t,x)-\frac{\chi_1 k}{d_3})u-(a_2(t,x)-\frac{\chi_1 l}{d_3})v-\frac{\chi_1 \lambda}{d_3}w\Big)\\
&\ge d_1\Delta u-\chi_1 \nabla u\cdot\nabla w\\
&\,\, +u\Big(a_0(t,x)-(a_1(t,x)-\frac{\chi_1 k}{d_3})u-(a_2(t,x)-\frac{\chi_1 l}{d_3})(\bar A_2+\epsilon_0)-\frac{\chi_1 \lambda}{d_3}\big(\frac{k}{\lambda} \epsilon_0+\frac{l}{\lambda} (\bar A_2+\epsilon_0)\big)\Big) \\
&=d_1\Delta u-\chi_1 \nabla u\cdot\nabla w+u\Big(a_0(t,x)-a_2(t,x)(\bar A_2+\epsilon_0)-\frac{\chi_1 k}{d_3}\epsilon_0-(a_1(t,x)-\frac{\chi_1 k}{d_3})u\Big)\\
&\ge d_1\Delta u-\chi_1 \nabla u\cdot\nabla w+u\Big(a_{0,\inf}-a_{2,\sup}(\bar A_2+\epsilon_0)-\frac{\chi_1 k}{d_3}\epsilon_0-(a_{1,\sup}-\frac{\chi_1 k}{d_3})u\Big)
\end{align*}
for $0<t\le T$. Let $\tilde u(t)$ be the solution of
$$
\tilde u_t=\tilde u\Big(a_{0,\inf}-a_{2,\sup}(\bar A_2+\epsilon_0)-\frac{\chi_1 k}{d_3}\epsilon_0-(a_{1,\sup}-\frac{\chi_1 k}{d_3})\tilde u\Big)
$$
with $\tilde u(t_0)=\inf_{ \bar\Omega} u_0(x)$.  We have
$\tilde u(t)$ is monotonically increasing in $t\ge t_0$ and
$$\lim_{t\to\infty} \tilde u(t)=\frac{a_{0,\inf}-a_{2,\sup}(\bar A_2+\epsilon_0)-\frac{\chi_1 k}{d_3}\epsilon_0}{(a_{1,\sup}-\frac{\chi_1 k}{d_3})}.
$$
By comparison principle for parabolic equations, we have
$$
u(x,t+t_0;t_0)\ge \tilde u(t+t_0)> \inf_{ \bar\Omega} u_0(x)\quad \forall\,\, 0<t\le T.
$$

(1)(ii) It can be proved by the similar arguments as those in (1)(i).

(2)(i) (i) Again, without loss of generality, assume $\inf_{ \bar\Omega}u_0(x)>0$. By Theorem \ref{thm-global-000} (2),
$$
v(x,t+t_0;t_0)\le \bar B_2+\epsilon_0
\quad \forall \,\, t\ge { t_0},\quad x\in\bar\Omega.
$$
This together with Lemma \ref{persistence-lm1} (2) implies that
\begin{align*}
u_t&=d_1\Delta u-\chi_1 \nabla \cdot (u\nabla w)+u\Big(a_0(t,x)-a_1(t,x)u-a_2(t,x)v\Big)\\
&=d_1\Delta u-\chi_1 \nabla u\cdot\nabla w+u\Big(a_0(t,x)-(a_1(t,x)-\frac{\chi_1 k}{d_3})u-(a_2(t,x)-\frac{\chi_1 l}{d_3})v-\frac{\chi_1 \lambda}{d_3}w\Big)\\
&\ge d_1\Delta u-\chi_1 \nabla u\cdot\nabla w\\
&\,\, +u\Big(a_0(t,x)-(a_1(t,x)-\frac{\chi_1 k}{d_3})u-\big(a_2(t,x)-\frac{\chi_1 l}{d_3}\big)_+(\bar B_2+\epsilon_0)-\frac{\chi_1 \lambda}{d_3}\big(\frac{k}{\lambda} \epsilon_0+\frac{l}{\lambda} (\bar B_2+\epsilon_0)\big)\Big) \\
&\ge d_1\Delta u-\chi_1 \nabla u\cdot\nabla w\\
&\,\, +u\Big(a_{0,\inf}-\big[\big(a_{2,\sup}-\frac{\chi_1 l}{d_3}\big)_++\frac{\chi_1 l}{d_3}\big](\bar B_2+\epsilon_0)-\frac{\chi_1 k}{d_3}\epsilon_0-(a_{1,\sup}-\frac{\chi_1 k}{d_3})u\Big)
\end{align*}
for $0<t\le T$.
Let $\tilde u(t)$ be the solution of
$$
\tilde u_t=\tilde u\Big(a_{0,\inf}-\big[\big(a_{2,\sup}-\frac{\chi_1 l}{d_3}\big)_++\frac{\chi_1 l}{d_3}\big](\bar B_2+\epsilon_0)-\frac{\chi_1 k}{d_3}\epsilon_0-(a_{1,\sup}-\frac{\chi_1 k}{d_3})\tilde u\Big)
$$
with $\tilde u(t_0)=\inf_{ \bar\Omega}u_0(x)$.  We have
$\tilde u(t)$ is monotonically increasing in $t\ge t_0$ and
$$\lim_{t\to\infty} \tilde u(t)=\frac{a_{0,\inf}-\big[\big(a_{2,\sup}-\frac{\chi_1 l}{d_3}\big)_++\frac{\chi_1 l}{d_3}\big](\bar B_2+\epsilon_0)-\frac{\chi_1 k}{d_3}\epsilon_0}{(a_{1,\sup}-\frac{\chi_1 k}{d_3})}.
$$
By comparison principle for parabolic equations, we have
$$
u(x,t+t_0;t_0)\ge \tilde u(t+t_0)> \inf_{ \bar\Omega} u_0(x)\quad \forall\,\, 0<t\le T.
$$

(2)(ii)It can be proved by the similar arguments as those in (2)(i).
\end{proof}

\begin{lemma}
\label{persistence-lm3}
\begin{itemize}
\item[(1)]
Assume (H1). { Let $\epsilon_0$ and $\delta_0=\delta_0(\epsilon_0)$ be such that Lemma
\ref{persistence-lm1}(1) holds with $\epsilon=\epsilon_0$ and $\delta=\delta_0$.}
There are $\underbar A_1^1>0$ and $\underbar A_2^1>0$ such that for any $t_0\in\RR$ and   { $u_0, v_0 \in C^+(\bar{\Omega})$ with}  $0< u_0\le { \bar A_1+\epsilon}$ and $0< v_0\le { \bar A_2+\epsilon}$, { the solution $(u(x,t;t_0)$, $v(x,t;t_0)$, $w(x,t;t_0))$  of $\eqref{u-v-w-eq00} +\eqref{ic}$ satisfies the following.}
\begin{itemize}
\item[(i)] For any $t\ge T$, if $\sup_{ \bar \Omega} u(x,t+t_0;t_0)\ge \delta_0$,  $\inf_{ \bar\Omega} u(x,t+t_0;t_0)\ge \underbar A_1^1$.

\item[(ii)] For any $t\ge T$, if $\sup_{ \bar \Omega} v(x,t+t_0;t_0)\ge \delta_0$,  $\inf_{l \bar\Omega} v(x,t+t_0;t_0)\ge \underbar A_2^1$.
\end{itemize}

\item[(2)] Assume (H2). { Let $\epsilon_0$ and $\delta_0=\delta_0(\epsilon_0)$ be such that Lemma
\ref{persistence-lm1}(2) holds with $\epsilon=\epsilon_0$ and $\delta=\delta_0$.}
There are $\underbar B_1^1>0$ and $\underbar B_2^1>0$ such that for any $t_0\in\RR$ and { $u_0, v_0 \in C^+(\bar{\Omega})$ with} $0< u_0\le \bar B_1+\epsilon$ and $0< v_0\le \bar B_2+\epsilon$, { the solution $(u(x,t;t_0)$, $v(x,t;t_0)$, $w(x,t;t_0))$  of $\eqref{u-v-w-eq00} +\eqref{ic}$ satisfies the following.}
\begin{itemize}
\item[(i)] For any $t\ge T$, if $\sup_{ \bar \Omega} u(x,t+t_0;t_0)\ge \delta_0$, $\inf_{ \bar\Omega} u(x,t+t_0;t_0)\ge \underbar B_1^1$.

\item[(ii)] For any $t\ge T$, if $\sup_{ \bar \Omega} v(x,t+t_0;t_0)\ge \delta_0$,  $\inf_{ \bar\Omega} v(x,t+t_0;t_0)\ge \underbar B_2^1$.
\end{itemize}
\end{itemize}
\end{lemma}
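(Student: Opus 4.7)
\emph{Plan.} The plan is to combine the contrapositive of Lemma \ref{persistence-lm1} with a uniform parabolic Harnack inequality applied to the first (respectively second) equation of \eqref{u-v-w-eq00} viewed as a linear parabolic PDE for $u$ (respectively $v$) with bounded coefficients.

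First I would secure uniform $L^{\infty}$ bounds. Under the hypotheses of part (1), Theorem \ref{thm-global-000}(1) gives $u(\cdot,t;t_0)\le \bar A_1+\epsilon$ and $v(\cdot,t;t_0)\le \bar A_2+\epsilon$ for every $t\ge t_0$; solving the elliptic equation $d_3\Delta w-\lambda w=-ku-lv$ with the Neumann boundary condition and invoking standard $W^{2,p}$ regularity (with $p>n$), together with the pointwise identity $\Delta w=(-ku-lv+\lambda w)/d_3$, produces a constant $K>0$ depending only on $\Omega,k,l,\lambda,d_3,\bar A_1,\bar A_2$ such that $\|\nabla w(\cdot,t)\|_\infty+\|\Delta w(\cdot,t)\|_\infty\le K$ for all $t\ge t_0$. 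Part (2) is identical after replacing $\bar A_i$ with $\bar B_i$ and using Theorem \ref{thm-global-000}(2).

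Next I would use the third equation of \eqref{u-v-w-eq00} to eliminate $\Delta w$ from the first equation and rewrite it as
\begin{equation*}
u_t=d_1\Delta u-\chi_1\nabla w\cdot\nabla u+c(x,t)u,\qquad \frac{\p u}{\p n}\Big|_{\p\Omega}=0,
\end{equation*}
with $c(x,t)=a_0(t,x)-\bigl(a_1(t,x)-\tfrac{\chi_1 k}{d_3}\bigr)u-\bigl(a_2(t,x)-\tfrac{\chi_1 l}{d_3}\bigr)v-\tfrac{\chi_1\lambda}{d_3}w$ uniformly bounded. Thus $u\ge 0$ is a classical solution of a uniformly parabolic linear equation with bounded drift and bounded zeroth order term. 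The parabolic Harnack inequality, adapted to the smooth Neumann boundary (either by local reflection across $\p\Omega$, or via the Neumann Harnack inequality of Moser--Saloff-Coste type), then yields a constant $C_H>0$, depending only on $T,\Omega,d_1,\chi_1,K$ and $\|c\|_\infty$, such that for every $s\ge t_0+T$,
\begin{equation*}
\sup_{x\in\bar\Omega}u(x,s-T;t_0)\le C_H\inf_{x\in\bar\Omega}u(x,s;t_0).
\end{equation*}

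Finally I would couple this with Lemma \ref{persistence-lm1} in contrapositive form. Let $\delta_2:=\delta(\delta_0/2)$ be the constant produced by Lemma \ref{persistence-lm1}(1)(i) applied with $\epsilon=\delta_0/2$. If $\sup_{\bar\Omega}u(\cdot,s-T;t_0)\le \delta_2$, then Lemma \ref{persistence-lm1}(1)(i) applied with initial time $s-T$ would force $u(\cdot,t;t_0)\le \delta_0/2$ throughout $[s-T,s]$, contradicting the hypothesis $\sup_{\bar\Omega}u(\cdot,s;t_0)\ge \delta_0$; hence $\sup_{\bar\Omega}u(\cdot,s-T;t_0)>\delta_2$. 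Combining this with the Harnack bound above gives
\begin{equation*}
\inf_{x\in\bar\Omega}u(x,s;t_0)\ge \delta_2/C_H=:\underbar A_1^1>0,
\end{equation*}
which proves (1)(i). Parts (1)(ii), (2)(i) and (2)(ii) follow by the same scheme with the roles of $u,v$ (and $\bar A_i$ vs.\ $\bar B_i$) swapped and the corresponding clause of Lemma \ref{persistence-lm1} invoked. \emph{The main obstacle} is justifying a single Harnack constant $C_H$ valid uniformly in $t_0\in\RR$ and in the admissible initial data; this relies on the uniform control of $\nabla w$ from the first step, which rests on the $L^\infty$ bounds of Theorem \ref{thm-global-000}, and on the smoothness of $\p\Omega$ so that Harnack can be extended up to the Neumann boundary.
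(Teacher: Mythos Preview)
Your argument is correct and takes a genuinely different route from the paper's. The paper argues by contradiction and soft compactness: assuming the conclusion fails, it extracts sequences $t_{0n},t_n,u_n,v_n$ with $\sup_{\bar\Omega}u(\cdot,t_n+t_{0n};t_{0n})\ge\delta_0$ but $\inf_{\bar\Omega}u(\cdot,t_n+t_{0n};t_{0n})\to 0$, passes to a limit of the shifted coefficients $a_i(t+t_n+t_{0n},x),b_i(t+t_n+t_{0n},x)$ and of the solutions on $(-T,\infty)$ using parabolic regularity, and obtains a limiting nonnegative solution $\tilde u$ with $\tilde u(\cdot,-T/2)\ge 0$, $\sup_{\bar\Omega}\tilde u(\cdot,0)\ge\delta_0$ and $\inf_{\bar\Omega}\tilde u(\cdot,0)=0$, contradicting the strong maximum principle.

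By contrast, you work constructively: uniform $L^\infty$ control of $u,v,w,\nabla w,\Delta w$ turns the $u$-equation into a linear parabolic equation with uniformly bounded drift and potential, so a single Harnack constant $C_H$ (uniform in $t_0$ and initial data) links $\sup$ at an earlier slice to $\inf$ at a later one; then the contrapositive of Lemma~\ref{persistence-lm1} forces the earlier $\sup$ to stay above a fixed threshold $\delta_2$, yielding the explicit lower bound $\underbar A_1^1=\delta_2/C_H$. Your approach is more quantitative and avoids the limiting procedure, at the cost of having to justify Harnack up to the Neumann boundary with merely $L^\infty$ drift; the paper's approach uses only the strong maximum principle and compactness, which is technically lighter but nonconstructive. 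One small point worth tightening: as stated you compare $\sup u(\cdot,s-T)$ with $\inf u(\cdot,s)$, but $s-T$ is the left endpoint of the available time interval when $s=t_0+T$; to be safe, apply Harnack on $[s-T,s]$ comparing an interior slice (say $s-T/2$) with $s$, and run the contrapositive of Lemma~\ref{persistence-lm1} on the window of length $T/2$ instead.
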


\begin{proof}
(1)(i)  Assume that (1)(i)  does not hold. Then there are $t_{0n}\in\RR$, $t_n\ge T$,  and $u_n,v_n$ with { $0<u_n \le \bar A_1+\epsilon$ and $0<v_n \le  \bar A_2+\epsilon$ } such that
$$
{\sup_{x \in \bar \Omega}} u(x,t_n+t_{0n};t_{0n},u_n,v_n)\ge \delta_0,\quad \lim_{n\to\infty} \inf_{x\in\bar\Omega} u(x,t_n+t_{0n};t_{0n},u_n,v_n)=0.
$$
By Theorem \ref{thm-global-000}(1),
$$
{ 0<u(x,t+t_{0n};t_{0n},u_n,v_n)\le \bar A_1+\epsilon_0,\quad 0<v(x,t+t_{0n};t_{0n},u_n,v_n)\leq \bar A_2+\epsilon_0}\quad \forall\,\, t>{ t_0},\,\,\, x\in\bar\Omega.
$$
Without loss of generality, we may assume that
{
$$
\lim_{n\to\infty} a_i(x,t+t_n+t_{0n})=\tilde a_i(x,t),\quad \lim_{n\to\infty} b_i(x,t+t_n+t_{0n})=\tilde b_i(x,t)
$$}
and
$$
\lim_{n\to\infty} u(x,t+t_n+t_{0n};t_{0n})=\tilde u(x,t),\quad \lim_{n\to\infty} v(x,t+t_n+t_{0n};t_{0n})=\tilde v(x,t)
$$
{ uniformly in $x\in\bar\Omega$ and $t$ in bounded closed sets of $(-T,\infty)$.}
{Note that
$$
u(x,t+t_n+t_{0n};t_{0n},u_n,v_n)=u(x,t+t_n+t_{0n};t_n+t_{0n},u(\cdot,t_n+t_{0n};t_{0n},u_n,v_n),v(\cdot,t_n+t_{0n};t_{0n},u_n,v_n)),
$$
and
$$
v(x,t+t_n+t_{0n};t_{0n},u_n,v_n)=v(x,t+t_n+t_{0n};t_n+t_{0n},u(\cdot,t_n+t_{0n};t_{0n},u_n,v_n),v(\cdot,t_n+t_{0n};t_{0n},u_n,v_n)).
$$
Therefore
$$
\tilde u(x,t)= \tilde u(x,t;0,\tilde u(\cdot,0),\tilde v(\cdot,0)), \quad \tilde v(x,t)= \tilde v(x,t;0,\tilde u(\cdot,0),\tilde v(\cdot,0)),
$$
where $\left( \tilde u(x,t;0,\tilde u(\cdot,0),\tilde v(\cdot,0)),\tilde v(x,t;0,\tilde u(\cdot,0),\tilde v(\cdot,0)), \tilde w(x,t;0,\tilde u(\cdot,0),\tilde v(\cdot,0))\right)$ is the solution of \eqref{u-v-w-eq00} on
$(-T,\infty)$ with $a_i $ being replaced by $\tilde a_i$ and $b_i $ being replaced by $\tilde b_i $, and
$$\big( \tilde u(x,0;0,\tilde u(\cdot,0),\tilde v(\cdot,0)),\tilde v(x,0;0,\tilde u(\cdot,0),\tilde v(\cdot,0))\big)=\big(\tilde u(x,0),\tilde v(x,0)\big).
$$   Moreover,}
{  we have
$$
\tilde u(x,-\frac{T}{2})=\lim_{n\to\infty} u(x,-\frac{T}{2}+t_n+t_{0n};t_{0n},u_n,v_n),
$$
and
$$
\tilde v(x,-\frac{T}{2})=\lim_{n\to\infty} v(x,-\frac{T}{2}+t_n+t_{0n};t_{0n},u_n,v_n).
$$}
Hence $\tilde u(x,-T/2)\ge 0$, $\tilde v(x,-T/2)\ge 0$ for $x\in\bar\Omega$,  and
$\sup_{ \bar\Omega}\tilde u(x,0)\ge \delta_0,\quad \inf_{ \bar\Omega} \tilde u(x,0)=0,$
which is a contradiction  { by  comparison principle for parabolic equations}. Hence (1)(i) holds.

{ (1)(ii), (2)(i), (2)(ii)  can be proved by the similar arguments as those in (1)(i).}
\end{proof}

\begin{lemma}
\label{persistence-lm4}
\begin{itemize}
\item[(1)] { Assume  (H4).  Let $\epsilon_0$ and $\delta_0=\delta_0(\epsilon_0)$ be such that Lemma
\ref{persistence-lm1}(1) and Lemma \ref{persistence-lm2}(1) hold with $\epsilon=\epsilon_0$ and $\delta=\delta_0$.}
There are  $\underbar A_1^2>0$ and $\underbar A_2^2>0$ such that  for any $t_0\in\RR$ and  { $u_0, v_0 \in C^+(\bar{\Omega})$ with} $0< u_0\le { \bar  A_1+\epsilon}$ and $0< v_0\le { \bar A_2+\epsilon}$, { the solution $(u(x,t;t_0)$, $v(x,t;t_0)$, $w(x,t;t_0))$  of $\eqref{u-v-w-eq00} +\eqref{ic}$ satisfies the following.}
\begin{itemize}
\item[(i)] For any $\underbar A_1\le \underbar A_1^2$, if  $\inf_{ \bar\Omega} u_0(x)\ge \underbar A_1$, then $\inf_{ \bar\Omega} u(x,T+t_0;t_0)\ge \underbar A_1$.

\item[(ii)] For any $\underbar A_2\le \underbar A_2^2$, if $\inf_{ \bar\Omega}v_0(x)\ge \underbar A_2$, then $\inf_{ \bar\Omega} v(x,T+t_0;t_0)\ge \underbar A_2$.
\end{itemize}

\item[(2)]
{ Assume  (H5).  Let $\epsilon_0$ and $\delta_0=\delta_0(\epsilon_0)$ be such that Lemma
\ref{persistence-lm1}(2) and  Lemma
\ref{persistence-lm2}(2) hold with $\epsilon=\epsilon_0$ and $\delta=\delta_0$.}
There are $\underbar B_1^1>0$ and $\underbar B_2^1>0$ such that for any $t_0\in\RR$ and { $u_0, v_0 \in C^+(\bar{\Omega})$ with} $0< u_0\le \bar B_1+\epsilon$ and $0< v_0\le \bar B_2+\epsilon$, { the solution $(u(x,t;t_0)$, $v(x,t;t_0)$, $w(x,t;t_0))$  of $\eqref{u-v-w-eq00} +\eqref{ic}$ satisfies the following.}
\begin{itemize}
\item[(i)] For any $\underbar B_1\le \underbar B_1^2$, if  $\inf_{ \bar\Omega}u_0(x)\ge \underbar B_1$, then $\inf_{ \bar\Omega} u(x,T+t_0;t_0)\ge \underbar B_1$.

\item[(ii)] For any $\underbar B_2\le \underbar B_2^2$, if $\inf_{ \bar\Omega}v_0(x)\ge \underbar B_2$, then $\inf_{ \bar\Omega} v(x,T+t_0;t_0)\ge \underbar B_2$.
\end{itemize}

\end{itemize}
\end{lemma}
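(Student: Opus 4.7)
The plan is to prove (1)(i) via contradiction and a compactness argument modeled on the proof of Lemma \ref{persistence-lm3}; the symmetric part (1)(ii) and the (H5) parts in (2) follow by the same scheme, swapping the roles of $u$ and $v$ or replacing (H4), $\bar A_i$, $\underbar A_1^1$ by (H5), $\bar B_i$, $\underbar B_1^1$ and invoking the (2)-variants of Lemmas \ref{persistence-lm2} and \ref{persistence-lm3}. I would set $\underbar A_1^2:=\min\{\delta_0,\underbar A_1^1\}$, where $\delta_0$ comes from Lemma \ref{persistence-lm2}(1) and $\underbar A_1^1$ from Lemma \ref{persistence-lm3}(1). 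For the contradiction, I assume there are sequences $\underbar A_{1,n}\le 1/n$, $t_{0,n}\in\RR$, and admissible data $u_{0,n},v_{0,n}$ (meeting the stated upper bounds) with $\inf_{\bar\Omega}u_{0,n}\ge\underbar A_{1,n}$ but $\inf_{\bar\Omega}u_n(\cdot,T+t_{0,n};t_{0,n})<\underbar A_{1,n}\to 0$. Applying Lemma \ref{persistence-lm3}(1)(i) at $t=T$, whenever $\sup u_n(\cdot,T+t_{0,n})\ge\delta_0$ one gets $\inf u_n\ge\underbar A_1^1>0$, so after passing to a subsequence I may assume $\sup u_n(\cdot,T+t_{0,n})<\delta_0$ for all $n$.

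Next I would run a compactness step. Setting $\hat u_n(\cdot,t):=u_n(\cdot,T+t_{0,n}+t)$ and similarly $\hat v_n,\hat w_n,\hat a_i,\hat b_i$ on $[-T,\infty)$, the uniform $L^\infty$ bounds from Theorem \ref{thm-global-000}(1) together with standard parabolic/elliptic Schauder estimates yield uniform $C^{2+\alpha,1+\alpha/2}$ bounds on compact subsets of $(-T,\infty)\times\bar\Omega$. Extracting a subsequence, $(\hat u_n,\hat v_n,\hat w_n)\to(\tilde u,\tilde v,\tilde w)$ locally uniformly to a solution of the limit system (with limits of the coefficients). Since $\tilde u\ge 0$ and $\inf_{\bar\Omega}\tilde u(\cdot,0)=\lim_n\inf u_n(\cdot,T+t_{0,n})=0$, the strong maximum principle combined with the Hopf boundary lemma for the Neumann condition forces $\tilde u\equiv 0$ on $\bar\Omega\times(-T,0]$. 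Translating back, for any fixed $\eta\in(0,T)$, $\|u_n(\cdot,t_{0,n}+\eta)\|_\infty\to 0$ along the subsequence.

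The last step combines Lemma \ref{persistence-lm2}(1)(i) with a quantitative comparison on the initial short interval. The uniform upper bounds on $u_n,v_n,w_n$ give a constant $M\ge 0$ with $u_t\ge d_1\Delta u-\chi_1\nabla u\cdot\nabla w-Mu$; comparing $u_n$ on $[t_{0,n},t_{0,n}+\eta]$ with the spatially constant subsolution $e^{-M(t-t_{0,n})}\underbar A_{1,n}$ gives $\inf u_n(\cdot,t_{0,n}+\eta)\ge e^{-M\eta}\underbar A_{1,n}$. For $n$ large, $\|u_n(\cdot,t_{0,n}+\eta)\|_\infty<\delta_0$, so Lemma \ref{persistence-lm2}(1)(i) applies starting at $t_{0,n}+\eta$, and the explicit ODE comparison $\tilde u'=\tilde u(C-D\tilde u)$ in its proof, with $C:=a_{0,\inf}-a_{2,\sup}(\bar A_2+\epsilon_0)-\chi_1 k\epsilon_0/d_3>0$ under (H4) and $D:=a_{1,\sup}-\chi_1 k/d_3>0$, yields
\[
\inf u_n(\cdot,T+t_{0,n})\;\ge\;\underbar A_{1,n}\,e^{-M\eta}\,e^{C(T-\eta)}\,(1+o(1))
\quad\text{as }n\to\infty.
\]
The crux, which I expect to be the main obstacle, is choosing $\eta\in(0,CT/(C+M))$ so that the exponent $CT-(C+M)\eta$ is strictly positive; once that is done, for $n$ large the right side strictly exceeds $\underbar A_{1,n}$, contradicting the negation and proving (1)(i). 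The remaining parts (1)(ii), (2)(i), (2)(ii) follow by the same three-step template with obvious substitutions.
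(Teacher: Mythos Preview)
Your proof is correct and takes a genuinely different route from the paper's. Both argue by contradiction with sequences $(\underbar A_{1,n},t_{0,n},u_{0,n},v_{0,n})$ violating (1)(i), but the mechanisms diverge thereafter. The paper never invokes Lemma \ref{persistence-lm3}; instead it examines the \emph{initial} data directly, splitting into two cases according to whether $m_0:=\lim_n|\{x:\,u_{0,n}(x)\ge\delta_0/2\}|$ vanishes. When $m_0=0$ it runs an $L^p$ semigroup/generalized Gronwall argument to show that the solution launched from $u_{0,n}$ is close (in $L^p$, and hence $w$ is close in $C^1$) to the one launched from a truncated $\tilde u_{0,n}\le\delta_0/2$, after which the arguments of Lemma \ref{persistence-lm2} apply directly at time $T$ to yield the contradiction. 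When $m_0>0$ it uses positivity of the heat semigroup to manufacture an intermediate time $T_0\in(0,T)$ with $\sup_{\bar\Omega} u_n(\cdot,t_n+T_0)\ge\delta_\infty>0$, and then a compactness/strong maximum principle argument at time $T$ forces $u_0^*\equiv 0$, contradicting $\sup u_0^*\ge\delta_\infty$.

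Your approach sidesteps both the case split and the semigroup machinery: you first use Lemma \ref{persistence-lm3} at $t=T$ to force $\sup_{\bar\Omega} u_n(\cdot,T+t_{0,n})<\delta_0$, then run the compactness limit and strong maximum principle to get $u_n(\cdot,t_{0,n}+\eta)\to 0$ for any fixed small $\eta$, and finally balance a crude exponential decay bound on $[t_{0,n},t_{0,n}+\eta]$ against the quantitative logistic growth extracted from the proof of Lemma \ref{persistence-lm2} on $[t_{0,n}+\eta,t_{0,n}+T]$, with $\eta$ chosen so that the net exponent $CT-(C+M)\eta$ is positive. Your argument is shorter and more elementary; the paper's avoids the somewhat delicate choice of $\eta$ and the need to reach inside the proof of Lemma \ref{persistence-lm2}, at the price of heavier analytic machinery.
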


\begin{proof}
(1)(i)  We prove it using properly modified similar arguments of \cite[Lemma 5.3]{ITBWS16}.

Assume that (1)(i) does not hold. Then there are $\underbar A_{1,n}\to 0$, { $u_n, v_n \in C^+(\bar{\Omega})$  with $0<u_n \le \bar A_1+\epsilon$ and $0<v_n \le  \bar A_2+\epsilon$ },
$t_n\in\RR$, and $x_n\in\Omega$ such that
 $$
u_n(x)\ge \underbar A_{1,n}\quad \forall\,\, x\in\bar\Omega
\quad {\rm and}\quad
u(x_n,T+t_n;t_n,u_n,v_n)<\underbar A_{1,n}.
$$
Let
$$
\Omega_n=\{x\in\Omega\,|\, u_n(x)\ge \frac{\delta_0}{2}\}.
$$
Without loss of generality, we may assume that $\lim_{n\to\infty}|\Omega_n|$ exists. Let
$$
m_0=\lim_{n\to\infty}|\Omega_n|.
$$

{Assume that $m_0=0$. Then there is $\tilde u_n\in C^0(\bar\Omega)$ such that
$$
\underbar A_{1,n}\le \tilde u_n(x)\le  \frac{\delta_0}{2}\quad {\rm and}\quad
\lim_{n\to\infty} \|u_n-\tilde u_n\|_{L^p(\Omega)}=0\quad \forall\,\, 1\le p<\infty.
$$
This implies that
$$
\lim_{n\to\infty} \|\phi^1_n(\cdot,t)\|_{L^p(\Omega)}+\lim_{n\to\infty} \|\phi^2_n(\cdot,t)\|_{L^p(\Omega)}=0
$$
uniformly in $t\in[t_n,t_n+T]$ for all $1\le p<\infty,$ where $\phi^1_n(\cdot,t)=u(\cdot,t;t_n,u_n,v_n)-u(\cdot,t;t_n,\tilde u_n,v_n)$ and $\phi^2_n(\cdot,t)=v(\cdot,t;t_n,u_n,v_n)-v(\cdot,t;t_n,\tilde u_n,v_n).$  Indeed, let
$$G^1_n(\cdot,t)=u(\cdot,t;t_n,u_n,v_n),\,\, G^2_n(\cdot,t)=v(\cdot,t;t_n,u_n,v_n),\,\, W_n(\cdot,t)=w(\cdot,t;t_n, u_n,v_n),
$$
$$ \tilde G^1_n(\cdot,t)=u(\cdot,t;t_n,\tilde u_n,v_n),\,\, \tilde G^2_n(\cdot,t)=v(\cdot,t;t_n,\tilde u_n,v_n),\,\,  \tilde W_n(\cdot,t)=w(\cdot,t;t_n, \tilde u_n,v_n),
 $$
 and
 $$\hat W_n(\cdot,t)(\cdot,t)=w(\cdot,t;t_n, u_n,v_n)-w(\cdot,t;t_n,\tilde u_n,v_n).$$
   Then
\begin{align}
\label{prooflemm3.4-eq1}
\phi^1_n(\cdot,t)=&  e^{-A(t-t_n)}\big(u_n-\tilde u_n\big)-\chi_1\int_{t_n}^t e^{-A(t-s)}\nabla \cdot \big[\phi^1_n(\cdot,s) \nabla W_n(\cdot,s)+\tilde G^1_n(\cdot,s) \nabla \hat W_n(\cdot,s)  \big]ds\nonumber\\
&+\int_{t_n}^t e^{-A(t-s)}\phi^1_n(\cdot,s)\Big(1+ a_0(s,\cdot)-a_1(s,\cdot)(G^1_n(\cdot,s)+\tilde G^1_n(\cdot,s))-a_2(s,\cdot)G^2_n(\cdot,s) \Big)ds\nonumber\\
&-\int_{t_n}^t e^{-A(t-s)}a_2(s,\cdot)\big( \tilde G^1_n(\cdot,t)\big)\phi^2_n(\cdot,s)ds,
\end{align}
and
\begin{align}
\label{prooflemm3.4-eq2}
\phi^2_n(\cdot,t)=&  -\chi_2\int_{t_n}^t e^{-A(t-s)}\nabla\cdot \big[\phi^2_n(\cdot,s) \nabla W_n(\cdot,s)+\tilde G^2_n(\cdot,s) \nabla \hat W_n(\cdot,s)  \big]ds\nonumber\\
&+\int_{t_n}^t e^{-A(t-s)}\phi^2_n(\cdot,s)\Big(1+ b_0(s,\cdot)-b_2(s,\cdot)(G^2_n(\cdot,s)+\tilde G^2_n(\cdot,s))-b_1(s,\cdot)G^1_n(\cdot,s) \Big)ds\nonumber\\
&-\int_{t_n}^t e^{-A(t-s)}b_1(s,\cdot)\big( \tilde G^2_n(\cdot,t)\big)\phi^1_n(\cdot,s)ds,
\end{align}
where $A=-\Delta +I$ with
  $D(A)=\Big\{ u \in W^{2,p}(\Omega) \, |  \, \frac{\p u}{\p n}=0 \quad \text{on } \, \p \Omega \Big\}
  $
(it is known that $A$ is a sectorial operator in { $X=L^p(\Omega)$}).
Now, fix  $1<p <\infty.$  By  regularity and a  priori estimates for
elliptic equations,  \cite[Theorem 1.4.3]{DH77},  \cite[Lemma 2.2]{ITBWS16}, \eqref{prooflemm3.4-eq1}, and \eqref{prooflemm3.4-eq2},   for any $\epsilon \in (0, \frac{1}{2}),$  there is $C=C(\epsilon)>0$ such that
\vspace{-0.1in}\begin{align}
\label{prooflemm3.4-eq3}
&\|\phi^1_n(\cdot,t)\|_{L^p(\Omega)}\nonumber\\
&\leq \|u_n-\tilde u_n\|_{L^p(\Omega)}+ C\chi_1\max_{t_n\leq s\leq t_n+T}\|\nabla W_n(\cdot,s))\|_{ \infty}\int_{t_n}^t(t-s)^{-\epsilon-\frac{1}{2}}\|\phi^1_n(\cdot,s)\|_{L^p(\Omega)}ds\nonumber\\
&\,\, +C\chi\max_{t_n\leq s\leq t_n+T}\|\hat W_n(\cdot,s)\|_{ \infty}\int_{t_n}^t(t-s)^{-\epsilon-\frac{1}{2}}(\|\phi^1_n(\cdot,s)\|_{L^p(\Omega)}+\|\phi^2_n(\cdot,s)\|_{L^p(\Omega)})ds\nonumber\\
&\,\, +C\int_{t_n}^t \{1+ a_{0,sup}+a_{1,\sup}[\max_{t_n\leq s\leq t_n+T}(\| G^1(\cdot,s)\|_{ \infty}+\|\tilde G^1(\cdot,s)\|_{\infty})]\}\|\phi^1_n(\cdot,s)\|_{L^p(\Omega)}ds\nonumber \\
&\,\, +Ca_{2,\sup}\max_{t_n\leq s\leq t_n+T}\| G^2(\cdot,s)\|_{ \infty}\int_{t_n}^t \|\phi^1_n(\cdot,s)\|_{L^p(\Omega)}ds\nonumber\\
&\,\,  +Ca_{2,\sup}\max_{t_n\leq s\leq t_n+T}\|\tilde G^1(\cdot,s)\|_{ \infty}\int_{t_n}^t \|\phi^2_n(\cdot,s)\|_{L^p(\Omega)}ds.
\end{align}
and
\vspace{-0.1in}\begin{align}
\label{prooflemm3.4-eq4}
&\|\phi^2_n(\cdot,t)\|_{L^p(\Omega)}\nonumber\\
&\le C\chi_2\max_{t_n\leq s\leq t_n+T}\|\nabla W_n(\cdot,s))\|_{\infty}\int_{t_n}^t(t-s)^{-\epsilon-\frac{1}{2}}\|\phi^2_n(\cdot,s)\|_{L^p(\Omega)}ds\nonumber\\
&\,\, +C\chi\max_{t_n\leq s\leq t_n+T}\|\hat W_n(\cdot,s)\|_{ \infty}\int_{t_n}^t(t-s)^{-\epsilon-\frac{1}{2}}(\|\phi^1_n(\cdot,s)\|_{L^p(\Omega)}+\|\phi^2_n(\cdot,s)\|_{L^p(\Omega)})ds\nonumber\\
&\,\, +C\int_{t_n}^t \{1+ b_{0,sup}+b_{2,\sup}[\max_{t_n\leq s\leq t_n+T}(\| G^2(\cdot,s)\|_{\infty}+\|\tilde G^2(\cdot,s)\|_{\infty})]\}\|\phi^2_n(\cdot,s)\|_{L^p(\Omega)}ds\nonumber \\
&\,\, +C b_{1,\sup}\max_{t_n\leq s\leq t_n+T}\| G^1(\cdot,s)\|_{ \infty}\int_{t_n}^t\|\phi^2_n(\cdot,s)\|_{L^p(\Omega)}ds\nonumber\\
&\,\,  +Cb_{1,\sup}\max_{t_n\leq s\leq t_n+T}\|\tilde G^2(\cdot,s)\|_{ \infty}\int_{t_n}^t\|\phi^1_n(\cdot,s)\|_{L^p(\Omega)}ds.
\end{align}
Therefore there exists a positive constant $C_0$ independent of  $n$ such that
\begin{align}
\label{prooflemm3.4-eq5}
&\|\phi^1_n(\cdot,t+t_n)\|_{L^p(\Omega)}+\|\phi^1_n(\cdot,t+t_n)\|_{L^p(\Omega)}\nonumber\\
&\leq \|u_n-\tilde u_n\|_{L^p(\Omega)}+ C_0\int_{0}^{t}(t-s)^{-\epsilon-\frac{1}{2}}(\|\phi^1_n(\cdot,s+t_n)\|_{L^p(\Omega)}+\|\phi^1_n(\cdot,s+t_n)\|_{L^p(\Omega)})ds
\end{align}
for all $t\in [0,T]$.
By \eqref{prooflemm3.4-eq5} and the generalized Gronwall's inequality (see \cite[page 6]{DH77}), we get
$$
\lim_{n\to\infty} (\|\phi^1_n(\cdot,t)\|_{L^p(\Omega)}+\|\phi^1_n(\cdot,t)\|_{L^p(\Omega)})=0
$$
uniformly in $t\in[t_n,t_n+T]$ for all $1\leq p<\infty.$
This implies that
$$
\lim_{n\to\infty}\|w(\cdot,t;t_n,u_n,v_n))-w(\cdot,t;t_n;\tilde u_n,v_n)\|_{C^1(\bar \Omega)}=0
$$
uniformly in $t\in[t_n,t_n+T]$. Note that $v(x,t;t_n,\tilde u_n,v_n)\le \bar A_2+\epsilon_0$ for $t\in[t_n,t_n+T]$ and
by Lemma \ref{persistence-lm1}(1), $u(x,t;t_n,\tilde u_n,v_n)\le \epsilon_0$ for $t\in [t_n,t_n+T]$. Hence
$$
w(\cdot,t;t_n;\tilde u_n,v_n)\le  \frac{k}{\lambda}\epsilon_0+\frac{l}{\lambda}(\bar{A_2}+\epsilon_0)
$$
for all $t\in [t_n,t_n+T]$ and $x\in\Omega$. It then follows that for any $\epsilon>0$,
$$
w(\cdot,t;t_n;u_n,v_n)\le  (\frac{k}{\lambda}+\epsilon)\epsilon_0+\frac{l}{\lambda}(\bar{A_2}+\epsilon_0)
$$
for all $t\in [t_n,t_n+T]$, $x\in\Omega$, and $n\gg 1$. Then by the arguments of Lemma \ref{persistence-lm2}, $\inf u(\cdot,t_n+T;t_n,u_n)\ge A_{1,n}$, which
is a contradiction.
Therefore, $m_0\not =0$.

By $m_0\not =0$ and comparison principle for parabolic equations,  without loss of generality, we may assume that
 $$
 \liminf_{n\to\infty}\|e^{-At}u_n\|_{\infty}>0\quad \forall\,\, t\in [0,T].
 $$
 This  implies that there is $0<T_0<T$ and $\delta_\infty>0$ such that
 $$\sup_{x\in\bar\Omega} u(x,t_n+T_0;t_n,u_n,v_n)\ge \delta_\infty$$
 for all $n\gg  1$.
 By  a priori estimates for parabolic equations, without loss of
generality, we may assume that
$$
u(\cdot,t_n+T_0;t_n,u_n,v_n)\to u_0^*,\quad v(\cdot,t_n+T_0;t_n,u_n,v_n)\to v_0^*
$$
and
$$ u(\cdot,t_n+T;t_n,u_n,v_n)\to u^*,\quad v(\cdot,t+n+T;t_n,u_n,v_n)\to v^*
$$
as $n\to\infty$. Without loss of generality, we may also assume that
$$
a_i(t+t_n,x)\to a_i^*(t,x),\quad b_i(t+t_n,\cdot)\to b_i^*(t,x)
$$
as $n\to\infty$ locally uniformly in $(t,x)\in\RR\times\bar\Omega$. Then  we have
$$
u^*(x)=u^*(x,T;T_0,u_0^*,v_0^*),\quad v^*(x)=v^*(x,T;t_0,u_0^*,v_0^*)
$$
and
$$
 \inf_{\bar\Omega} u^*(x)=0,\quad  \inf_{ \bar\Omega }v^*(x)\ge 0,
$$
where $(u^*(x,t;T_0,u_0^*,v_0^*), v^*(x,t;T_0,u_0^*,v_0^*),w(x,t;T_0,u_0^*,v_0^*))$  is the solution of \eqref{u-v-w-eq00} with
$a_i(t,x)$ and $b_i(t,x)$ being replaced by $a_i^*(t,x)$ and $b_i^*(t,x)$, and $(u^*(x,T_0;T_0,u_0^*,v_0^*), v^*(x,T_0;T_0,u_0^*,v_0^*))
=(u_0^*(x),v_0^*(x))$.
By comparison principle, we must have $u_0^*\equiv 0$. But
$\sup_{\bar\Omega} u_0^*\ge   \delta_\infty.$
This is a contradiction.
}

(1)(ii)  It can be proved by the similar arguments as those in (1)(i).

(2) Follows by similar arguments as those in (1).
\end{proof}

Let
\begin{equation}
\label{I-underbar-eq}
\underbar A_1=\min\{\underbar A_1^1,\underbar A_1^2\},\quad \underbar A_2=\min\{\underbar A_2^1,\underbar A_2^2\}
\end{equation}
and
\begin{equation}
\label{A-underbar-eq}
\underbar B_1=\min\{\underbar B_1^1,\underbar B_1^2\},\quad \underbar B_2=\min\{\underbar B_2^1,\underbar B_2^2\}.
\end{equation}

\medskip
{ Note that the constants $\underbar A_1$,  $\underbar A_2$,  $\underbar B_1$ and $\underbar B_2$ depend on $T$ and $\epsilon_0$.
}

\begin{lemma}
\label{persistence-lm5}
\begin{itemize}
\item[(1)] { Assume (H4).  Let $\epsilon_0$ and $\delta_0=\delta_0(\epsilon_0)$ be such that Lemma
\ref{persistence-lm1}(1)  and  Lemma
\ref{persistence-lm2}(1) hold with $\epsilon=\epsilon_0$ and $\delta=\delta_0$.}
For any  { $u_0, v_0 \in C^+(\bar{\Omega})$ with} $0< u_0\le {\bar A_1+\epsilon}$ and $0< v_0\le {\bar A_2+\epsilon}$, {the solution $(u(x,t;t_0)$, $v(x,t;t_0)$, $w(x,t;t_0))$  of $\eqref{u-v-w-eq00} +\eqref{ic}$ satisfies the following.}
\begin{itemize}
\item[(i)] If $\inf_{ \bar\Omega}u_0(x)\ge \underbar A_1$, then
\begin{equation}
\label{lower-bound-eq1}
\underbar A_1\le u(x,t+t_0;t_0)\le \bar A_1+\epsilon_0\quad \forall\,\, t\ge T,\,\,\, x\in\bar\Omega.
\end{equation}

\item[(ii)]  If $\inf_{ \in\bar\Omega}v_0(x)\ge \underbar A_2$, then
\begin{equation}
\label{lower-bound-eq2}
\underbar A_2\le v(x,t+t_0;t_0)\le \bar A_2+\epsilon_0\quad \forall\,\, t\ge T,\,\,\, x\in\bar\Omega.
\end{equation}
\end{itemize}

\item[(2)]
{Assume  (H5).  Let $\epsilon_0$ and $\delta_0=\delta_0(\epsilon_0)$ be such that Lemma
\ref{persistence-lm1}(2) and Lemma
\ref{persistence-lm2}(2) hold with $\epsilon=\epsilon_0$ and $\delta=\delta_0$.}
For any $t_0\in\RR$ and {$u_0, v_0 \in C^+(\bar{\Omega})$ with} $0< u_0\le \bar B_1+\epsilon$ and $0< v_0\le \bar B_2+\epsilon$, { the solution $(u(x,t;t_0)$, $v(x,t;t_0)$, $w(x,t;t_0))$  of $\eqref{u-v-w-eq00} +\eqref{ic}$ satisfies the following.}
\begin{itemize}
\item[(i)] If $\inf_{\in\bar\Omega}u_0(x)\ge \underbar B_1$, then
\begin{equation}
\label{lower-bound-eq1-2}
\underbar B_1\le u(x,t+t_0;t_0)\le \bar B_1+\epsilon_0\quad \forall\,\, t\ge T,\,\,\, x\in\bar\Omega.
\end{equation}

\item[(ii)]  If $\inf_{\bar\Omega}v_0(x)\ge \underbar B_2$, then
\begin{equation}
\label{lower-bound-eq2-2}
\underbar B_2\le v(x,t+t_0;t_0)\le \bar B_2+\epsilon_0\quad \forall\,\, t\ge T,\,\,\, x\in\bar\Omega.
\end{equation}
\end{itemize}
\end{itemize}
\end{lemma}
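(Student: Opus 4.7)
\noindent The plan is to establish the bound \eqref{lower-bound-eq1} (part (1)(i)); parts (1)(ii) and (2) follow by the symmetric argument with $u,v$ (and $A,B$) swapped. The upper bound $u(x, t + t_0; t_0) \le \bar A_1 + \epsilon_0$ for $t \ge T$ is immediate from Theorem \ref{thm-global-000}(1): since $u_0 \le \bar A_1 + \epsilon \le \bar A_1 + \epsilon_0$ and $v_0 \le \bar A_2 + \epsilon_0$ (assuming $\epsilon \le \epsilon_0$, otherwise the conclusion is trivial), the time $T(u_0, v_0, \epsilon_0)$ in that theorem may be taken to be zero. For the lower bound I aim to propagate $\inf_{x \in \bar\Omega} u(\cdot, s; t_0) \ge \underbar A_1$ to all $s \ge t_0 + T$, starting from the base case $s = t_0 + T$, which is precisely Lemma \ref{persistence-lm4}(1)(i) (noting $\underbar A_1 \le \underbar A_1^2$ by \eqref{I-underbar-eq}).

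\smallskip
The propagation rests on the following dichotomous observations at each $s \ge t_0 + T$. (a) If $\sup_x u(\cdot, s; t_0) \ge \delta_0$, then Lemma \ref{persistence-lm3}(1)(i) directly yields $\inf u(\cdot, s; t_0) \ge \underbar A_1^1 \ge \underbar A_1$. (b) If $\sup_x u(\cdot, s; t_0) < \delta_0$, then Lemma \ref{persistence-lm2}(1)(i) applied with initial datum $u(\cdot, s; t_0)$ at time $s$ (valid since also $v(\cdot, s; t_0) \le \bar A_2 + \epsilon_0$ by the upper bound) yields $u(x, s + \tau; t_0) > \inf_x u(\cdot, s; t_0)$ for $\tau \in (0, T]$, so the spatial infimum is non-decreasing over this window of length $T$. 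To glue these for an arbitrary $s > t_0 + T$, set
\[
\tau_* := \sup\{\tau \in [t_0 + T, s] : \sup_x u(\cdot, \tau; t_0) \ge \delta_0\},
\]
with $\tau_* := t_0 + T$ by convention when the set is empty. If $\tau_* = s$ the bound follows from (a). If $\tau_* < s$, then $\sup_x u(\cdot, \cdot; t_0) < \delta_0$ on $(\tau_*, s]$, and $\inf u(\cdot, \tau_*; t_0) \ge \underbar A_1$ either by Lemma \ref{persistence-lm4} (if $\tau_* = t_0 + T$) or by Lemma \ref{persistence-lm3} (if $\tau_* > t_0 + T$, using continuity of $s \mapsto \sup_x u(\cdot, s; t_0)$ to conclude $\sup u(\cdot, \tau_*; t_0) = \delta_0$). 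Iterating (b) on consecutive $T$-windows starting at $\tau_*$ then yields $\inf u(\cdot, s; t_0) \ge \underbar A_1$, completing the propagation.

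\smallskip
The main obstacle I anticipate is the edge case $\sup_x u(\cdot, \tau_*; t_0) = \delta_0$ exactly, where Lemma \ref{persistence-lm2} as stated requires strict inequality for its hypothesis. A close look at the proof of Lemma \ref{persistence-lm2} shows, however, that the decisive step is an ODE comparison whose comparison function is strictly increasing whenever the initial infimum lies strictly below the ODE equilibrium, a condition that is automatic here since $\underbar A_1$ is much smaller than that equilibrium. Consequently, the conclusion $u(x, s + \tau; t_0) > \inf_x u(\cdot, s; t_0)$ remains valid under the weaker hypothesis $\sup_x u(\cdot, s; t_0) \le \delta_0$, and this minor extension closes the gap. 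Alternatively, running the entire scheme with a slightly smaller $\delta_0' < \delta_0$ avoids the boundary case altogether.
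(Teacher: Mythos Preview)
Your proof is correct and follows essentially the same approach as the paper: establish the base case at $t_0+T$ via Lemma \ref{persistence-lm4}, then propagate forward by alternating between Lemma \ref{persistence-lm3} (when $\sup u\ge\delta_0$) and Lemma \ref{persistence-lm2} (when $\sup u<\delta_0$). The only difference is organizational: the paper runs an explicit induction on successive windows $[T,2T],\,[2T,3T],\dots$, doing the dichotomy inside each window, whereas you handle an arbitrary target time $s$ in one shot via the last-exit time $\tau_*$. Your treatment of the boundary case $\sup u(\cdot,\tau_*)=\delta_0$ is in fact more careful than the paper's, which applies Lemma \ref{persistence-lm2} at $t^*$ with equality without comment; your observation that the ODE comparison in that lemma only needs $\inf u_0$ below the equilibrium (automatic here) is exactly right. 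One caveat on your alternative fix: replacing $\delta_0$ by a smaller $\delta_0'$ in the definition of $\tau_*$ would require Lemma \ref{persistence-lm3} with threshold $\delta_0'$, which changes $\underbar A_1^1$ and hence $\underbar A_1$, so that route is less clean than your first fix.
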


\begin{proof}
(1)(i) First of all, by Lemma \ref{persistence-lm4}(1), we have
$$
\underbar A_1\le u(x,T+t_0;t_0)\le \bar A_1+\epsilon_0\quad \forall\,\, x\in\bar\Omega.
$$
Note that we have
$$
{\rm either}\,\,\, \sup_{\bar \Omega} u(x,T+t_0;t_0)> \delta_0\,\,\, {\rm or}\,\,\, \sup_{\bar \Omega} u(x,T+t_0;t_0)
  \le \delta_0.
  $$
   In the former case, if $\sup_{\bar \Omega} u(x,t+T+t_0;t_0)> \delta_0$ for all
  $0\le t\le T$, by Lemma \ref{persistence-lm3}, \eqref{lower-bound-eq1} holds
   for all $T\le t\le 2T$. If there is $t^*\in (T,2T)$ such that
   $\sup_{\bar \Omega} u(x,t+t_0;t_0)> \delta_0$ for $T\le t\le t^*$ and $\sup_{\bar \Omega} u(x,t^*+t_0;t_0)= \delta_0$, then
   by Lemma  \ref{persistence-lm3}, \eqref{lower-bound-eq1} holds
   for all $T\le t\le t^*$,  which together with  Lemma \ref{persistence-lm2}
   implies that  \eqref{lower-bound-eq1} also holds
   for all $t^*\le t\le 2T$.
  In the later case, by Lemma \ref{persistence-lm2},
     \eqref{lower-bound-eq1} also holds
   for all $T\le t\le 2T$.  Therefore, in any case,  \eqref{lower-bound-eq1} also holds
   for all $T\le t\le 2T$. Repeating the above process, we have that \eqref{lower-bound-eq1} also holds
   for all $t\ge T$.

(1)(ii) It can be proved by the similar arguments as those in (1)(i).

(2) It follows from the similar arguments as those in (1).
\end{proof}

We now prove  Theorem \ref{thm-entire-001}.

\begin{proof}[Proof of Theorem \ref{thm-entire-001}]
(1) Let $\epsilon_0$ and $\delta_0=\delta_0(\epsilon_0)$ be such that Lemma
\ref{persistence-lm1}(1) and Lemma \ref{persistence-lm2}(1) hold with $\epsilon=\epsilon_0$ and $\delta=\delta_0$.
Let $\underbar A_1$, $\bar A_1$, $\underbar A_2$, and $\bar A_2$ be as in Lemma \ref{persistence-lm5}(1).
By the assumption that $u_0\not \equiv 0$, $v_0\not\equiv 0$, and
comparison principle for parabolic equations, without loss of generality, we may assume that
$\inf_{\bar\Omega}u_0(x)>0$ and $\inf_{\bar\Omega} v_0(x)>0$.

First, by Theorem \ref{thm-global-000}, there is $T_1=T_1(u_0,v_0,\epsilon_0)$ such that
$$
u(x,t+t_0;t_0)\le \bar A_1+\epsilon_0,\,\, \, v(x,t+t_0;t_0)\le \bar A_2+\epsilon_0
\quad \forall \,\, t\ge T_1,\quad x\in\bar\Omega.
$$
Observe that if $\sup_{\bar\Omega} u(x,t+t_0;t_0)<\delta_0$, then
\begin{align*}
u_t&=d_1\Delta u-\chi_1 \nabla \cdot (u\nabla w)+u\Big(a_0(t,x)-a_1(t,x)u-a_2(t,x)v\Big)\\
&=d_1\Delta u-\chi_1 \nabla u\cdot\nabla w+u\Big(a_0(t,x)-(a_1(t,x)-\frac{\chi_1 k}{d_3})u-(a_2(t,x)-\frac{\chi_1 l}{d_3})v-\frac{\chi_1 \lambda}{d_3}w\Big)\\
&\ge d_1\Delta u-\chi_1 \nabla u\cdot\nabla w\\
&\,\, +u\Big(a_0(t,x)-(a_1(t,x)-\frac{\chi_1 k}{d_3})u-(a_2(t,x)-\frac{\chi_1 l}{d_3})(\bar A_2+\epsilon_0)-\frac{\chi_1 \lambda}{d_3}(\frac{k}{\lambda} \delta_0+\frac{l}{\lambda} (\bar A_2+\epsilon_0))\Big) \\
&\ge d_1\Delta u-\chi_1 \nabla u\cdot\nabla w+u\Big(a_0(t,x)-a_2(t,x)(\bar A_2+\epsilon_0)-\frac{\chi_1 k}{d_3}\epsilon_0-(a_1(t,x)-\frac{\chi_1 k}{d_3})u\Big).
\end{align*}
 Let $\tilde u(t;\tilde u_0)$ be the solution of
$$
\tilde u_t=\tilde u\Big(a_{0,\inf}-a_{2,\sup}(\bar A_2+\epsilon_0)-\frac{\chi_1 k}{d_3}\epsilon_0-(a_{1,\sup}-\frac{\chi_1 k}{d_3})\tilde u\Big)
$$
with $\tilde u(0;\tilde u_0)=\tilde u_0\in (0,\delta_0)$.  We have
$\tilde u(t)$ is monotonically increasing in $t\ge 0$ and
\begin{equation}
\label{new-add-eq1}
\lim_{t\to\infty} \tilde u(t;\tilde u_0)=\frac{a_{0,\inf}-a_{2,\sup}(\bar A_2+\epsilon_0)-\frac{\chi_1 k}{d_3}\epsilon_0}{(a_{1,\sup}-\frac{\chi_1 k}{d_3})}
>\delta_0.
\end{equation}
Observe also that
\begin{equation}
\label{new-add-eq2}
\inf_{t_0\in\RR} \inf_{x \in \bar\Omega}u(x,T+t_0;t_0)>0.
\end{equation}
 Indeed, we have either $\sup_{\bar\Omega} u_0 <\delta_0 $ or $\sup_{\bar\Omega} u_0 \geq \delta_0 $. If $\sup_{\bar\Omega} u_0 <\delta_0 $, we have by Lemma \ref{persistence-lm2} (i) that $ \inf_{\bar\Omega}u(x,T+t_0;t_0)\geq \inf_{\bar\Omega} u_0 >0$ for all $t_0\in \RR$ and then \eqref{new-add-eq2} follows.
If $\sup_{\bar\Omega} u_0\ge \delta_0$, but \eqref{new-add-eq2} does not hold, then there are $t_{0n}\in\RR$ and $x_n\in\bar\Omega$ such that
$$
\lim_{n\to\infty} u(x_n,T+t_{0n};t_{0n},u_0,v_0)=0.
$$
Let $a_i^n(t,x)=a_i(t+t_{0n},x)$ and $b_i^n(t,x)=b_i(t+t_{0n},x)$ for $i=0,1,2$. Then
\begin{align*}
&(u(x,t+t_{0n};t_{0n},u_0,v_0),v(x,t+t_{0n};t_{0n},u_0,v_0),w(x,t+t_{0n};t_{0n},u_0,v_0))\\
&=(u^n(x,t;u_0,v_0),v^n(x,t;u_0,v_0),w^n(x,t;u_0,v_0))
\end{align*}
 for $t\ge 0$, where
$(u^n(x,t;u_0,v_0),v^n(x,t;u_0,v_0),w^n(x,t;u_0,v_0))$ is the solution of \eqref{u-v-w-eq00} with $a_i$ and $b_i$ ($i=0,1,2$) being replaced by
$a_i^n$ and $b_i^n$ ($i=0,1,2$) and  $(u^n(x,0;u_0,v_0),v^n(x,0;u_0,v_0))=(u_0(x),v_0(x))$. Without loss of generality, we may assume that
$$
\lim_{n\to\infty}a_i^n(t,x)=a_i^\infty(t,x),\quad \lim_{n\to\infty} b_i^n(t,x)=b_i^\infty(t,x)
$$
uniformly in $x\in\bar\Omega$ and $t$ in bounded sets of $\RR$, and
$$
\lim_{n\to\infty} x_n=x_\infty.
$$
 Then
\begin{align*}
&\lim_{n\to\infty} (u^n(x,t;u_0,v_0),v^n(x,t;u_0,v_0),w^n(x,t;u_0,v_0))\\
&=(u^\infty(x,t;u_0,v_0),v^\infty(x,t;u_0,v_0),w^\infty(x,t;u_0,v_0))
\end{align*}
uniformly in $x\in\bar\Omega$ and $t$ in bounded set of $[0,\infty)$, where $(u^\infty(x,t;u_0,v_0),v^\infty(x,t;u_0,v_0)$, $w^\infty(x,t;u_0,v_0))$ is the solution of \eqref{u-v-w-eq00} with $a_i$ and $b_i$ ($i=0,1,2$) being replaced by
$a_i^\infty$ and $b_i^\infty$ ($i=0,1,2$) and  $(u^\infty(x,0;u_0,v_0),v^\infty(x,0;u_0,v_0))=(u_0(x),v_0(x))$.
It then follows that
$$
\inf_{\bar\Omega} u_0(x)>0\quad {\rm and}\quad u^\infty(x_\infty,T;u_0,v_0)=0,
$$
which is a contradiction. Hence if   $\sup_{\bar\Omega} u_0\ge \delta_0$,  \eqref{new-add-eq2} also holds.

{ Note that we have either
$\sup _{\bar\Omega} u(x,T+t_0;t_0)\ge \delta_0$ or $\sup _{\bar\Omega} u(x,T+t_0;t_0)< \delta_0$.}
{ If $\sup_{\bar\Omega} u(x,T+t_0;t_0)<\delta_0$, by \eqref{new-add-eq1}, \eqref{new-add-eq2}, and comparison principle for
parabolic equations, there are $ T_2(u_0,v_0, \epsilon_0)\ge T$ and $T\le\tilde  T_2(u_0,v_0,\epsilon_0)\le  T_2(u_0,v_0,\epsilon_0)$ such that
$$
\sup_{\bar\Omega} u(x,\tilde T_2(u_0,v_0,\epsilon_0)+t_0;t_0)=\delta_0.
$$
Hence, in either case, there is $\tilde T_2(u_0,v_0,\epsilon_0)\in [T, T_2(u_0,v_0,\epsilon_0)]$ such that
\begin{equation}
\label{new-add-eq3}
\sup_{\bar\Omega} u(x, \tilde T_2(u_0,v_0,\epsilon_0)+t_0;t_0,) \geq\delta_0.
\end{equation}}
{By \eqref{new-add-eq3} and  Lemma \ref{persistence-lm3},
$$
\inf_{\bar\Omega}u(x,\tilde T_2(u_0,v_0,\epsilon_0)+t_0;t_0)\ge \underbar A_1.
$$}
Then by Lemma \ref{persistence-lm5}(1),
\begin{equation}
\label{thm2-proof-eq1}
\underbar A_1\le u(x,t+t_0;t_0)\le \bar A_1+\epsilon_0\quad \forall \,\, t\ge \max\{T_1(u_0,v_0,\epsilon_0),{ T+T_2(u_0,v_0,\epsilon_0)}\}.
\end{equation}

Similarly, we can prove that there are $\tilde T_1(u_0,v_0,\epsilon_0)>0$ and $\tilde T_2(u_0,v_0,\epsilon_0)\ge T$ such that
\begin{equation}
\label{thm-proof-eq2}
\underbar A_2\le v(x,t+t_0;t_0)\le \bar A_2+\epsilon_0\quad \forall \,\, t\ge \max\{\tilde T_1(u_0,v_0,\epsilon_0),T+ \tilde T_2(u_0,v_0,\epsilon_0)\}.
\end{equation}
By  Theorem \ref{thm-global-000}, \eqref{thm2-proof-eq1}, and \eqref{thm-proof-eq2},
 for any $\epsilon>0$, there is $t_{\epsilon,u_0,v_0}$ such that \eqref{attracting-set-eq00} holds.

(2)  It follows from the similar arguments as those in (1).
\end{proof}

\begin{corollary}
\label{u-w-cor}
Consider \eqref{u-w-eq00} and assume \eqref{u-w-cond}. There is $\underbar A_1$  such that for  any $\epsilon>0,$ $t_0\in\RR,$ $u_0\in C^0(\bar \Omega)$ with $u_0\ge 0,$ and $u_0\not \equiv 0$, there exists $t_{\epsilon,u_0}$ such that
\begin{equation*}
\underbar A_1 \le u(x,t;t_0,u_0) \le \bar{A_1}+\epsilon
\end{equation*}
for all $x\in\bar\Omega$ and $t\ge t_0+t_{\epsilon,u_0}$, where $(u(x,t;t_0,u_0),w(x,t;t_0,u_0))$ is the global solution of
\eqref{u-w-eq00} with $u(x,t_0;t_0,u_0)=u_0(x)$
\end{corollary}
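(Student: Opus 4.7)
The plan is to mirror the proof of Theorem \ref{thm-entire-001}(1) step by step, dropping every term involving $v$ and every cross-coupling with the second species. Since all of the five technical lemmas (Lemmas \ref{persistence-lm1}--\ref{persistence-lm5}) were built around the two-species structure but used only the $u$-equation to control $u$ from below, their one-species analogs should go through with strictly shorter arguments. Concretely, I would first check the upper bound: arguing as in \eqref{aux-eq1} with $v\equiv 0$, one gets $u_t\le d_1\Delta u-\chi_1\nabla u\cdot\nabla w+u(a_{0,\sup}-(a_{1,\inf}-k\chi_1/d_3)u)$, so by comparison $u(\cdot,t;t_0,u_0)\le \bar A_1+\epsilon$ for all $t\ge t_0+T_1(u_0,\epsilon)$.

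Next I would establish single-species analogs of the five lemmas. (L1) For any $\epsilon>0$ there is $\delta(\epsilon)>0$ such that $0\le u_0\le\delta$ forces $u(\cdot,t;t_0,u_0)\le\epsilon$ on $[t_0,t_0+T]$, using $u_t\le d_1\Delta u-\chi_1\nabla u\cdot\nabla w+a_{0,\sup}u$. (L2) Fix $\epsilon_0>0$ with $a_{0,\inf}>\chi_1k\epsilon_0/d_3$ and $\delta_0<(a_{0,\inf}-\chi_1k\epsilon_0/d_3)/(a_{1,\sup}-\chi_1k/d_3)$; then $0<u_0<\delta_0$ implies $u(\cdot,t+t_0;t_0,u_0)>\inf u_0$ for $0<t\le T$, by rewriting the equation as $u_t=d_1\Delta u-\chi_1\nabla u\cdot\nabla w+u(a_0-(a_1-\chi_1k/d_3)u-\chi_1\lambda w/d_3)$, inserting the bound $w\le k\epsilon_0/\lambda$ coming from (L1) and the elliptic equation, and comparing with the scalar ODE $\tilde u_t=\tilde u(a_{0,\inf}-\chi_1k\epsilon_0/d_3-(a_{1,\sup}-\chi_1k/d_3)\tilde u)$. (L3) A normal-families/contradiction argument mirroring Lemma \ref{persistence-lm3}(1): if $\sup_{\bar\Omega}u(x,t+t_0;t_0,u_0)\ge\delta_0$ at some $t\ge T$, then $\inf_{\bar\Omega}u(x,t+t_0;t_0,u_0)\ge\underbar A_1^1$ for some $\underbar A_1^1>0$. (L4) The forward-invariance lemma: there is $\underbar A_1^2>0$ such that $\inf u_0\ge\underbar A_1\le \underbar A_1^2$ implies $\inf u(\cdot,T+t_0;t_0,u_0)\ge\underbar A_1$. (L5) Combining (L2)--(L4) by the same dichotomy as in Lemma \ref{persistence-lm5}, the region $\{\underbar A_1\le u\le\bar A_1+\epsilon_0\}$ is forward invariant after time $T$.

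Finally, I would run the trapping scheme from the proof of Theorem \ref{thm-entire-001}(1). Using the strong maximum principle I may assume $\inf u_0>0$. Theorem 1.1(1) (with $v\equiv 0$) gives some $T_1(u_0,\epsilon_0)$ after which $u\le\bar A_1+\epsilon_0$. Either $\sup u(\cdot,T+t_0;t_0,u_0)\ge\delta_0$, in which case (L3) gives $\inf u(\cdot,T+t_0;t_0,u_0)\ge\underbar A_1^1$, or $\sup u<\delta_0$, in which case the ODE comparison from (L2) pushes $\sup u$ above $\delta_0$ in finite time $T_2(u_0,\epsilon_0)$ and (L3) again applies. In either case (L5) traps $u$ in $[\underbar A_1,\bar A_1+\epsilon_0]$ for all $t\ge t_0+t_{\epsilon_0,u_0}$, and the statement with general $\epsilon$ follows from the upper-bound part together with $\underbar A_1:=\min\{\underbar A_1^1,\underbar A_1^2\}$.

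The main obstacle is (L4), which in the two-species case required the delicate variation-of-constants/$L^p$ estimates on the sectorial operator $A=-\Delta+I$ as in the proof of Lemma \ref{persistence-lm4}. In the one-species setting this argument simplifies substantially: the equations \eqref{prooflemm3.4-eq1}--\eqref{prooflemm3.4-eq2} collapse to a single scalar integral inequality, the cross terms $\phi^2_n$ and $\tilde G^2_n$ disappear, and the generalized Gronwall inequality directly yields $\|u(\cdot,t;t_n,u_n)-u(\cdot,t;t_n,\tilde u_n)\|_{L^p(\Omega)}\to 0$. Everything after that (passing to a limit, invoking the strong maximum principle to reach a contradiction with $\inf u^*\equiv 0$ yet $\sup u_0^*\ge\delta_\infty$) carries over verbatim.
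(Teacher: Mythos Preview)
Your proposal is correct and follows essentially the same approach as the paper's own proof, which is just a six-step outline that specializes Lemmas \ref{persistence-lm1}--\ref{persistence-lm5} to the single-species case (your (L1)--(L5)) and then reruns the trapping argument from the proof of Theorem \ref{thm-entire-001}(1). Your discussion of how (L4) simplifies in the one-species setting is accurate and in fact more detailed than what the paper provides.
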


\begin{proof}{  We outline  the proof in the following 6 steps.

\smallskip
{\bf Step 1.} Fix $T>0.$ By the arguments of Lemma \ref{persistence-lm1} (1), we have that, for any $\epsilon>0$,
there is $\delta=\delta(\epsilon,T)>0$ such that for any $u_0\in C^+(\bar\Omega)$, if $0\le u_0\le \delta$,
then $u(x,t;t_0,u_0)\le \epsilon$ for $t\in [t_0,t_0+T]$ and $x\in\bar\Omega$.

\smallskip

{\bf Step 2.}  By the arguments of Lemma \ref{persistence-lm2} (1), the following holds.
Let $\epsilon_0$ and $\delta_0=\delta_0(\epsilon_0,T)$ be such that {\bf Step 1} holds with $\epsilon=\epsilon_0$ and $\delta=\delta_0$, and
$$
a_{0,\inf}>\frac{\chi_1 k}{d_3}\epsilon_0\quad {\rm and}\quad
\delta_0<\frac{a_{0,\inf}-\frac{\chi_1 k}{d_3}\epsilon_0}{a_{1,\sup}-\frac{\chi_1 k}{d_3}}.
$$
For given $u_0 \in C^+(\bar\Omega)$,  if $0<u_0<\delta_0$, then
$u(x,t+t_0;t_0,u_0)>\inf u_0(x)\quad \forall \,\, 0<t\le T.$

\smallskip

{\bf Step 3.} By the arguments of Lemma \ref{persistence-lm3} (1), the following holds.
Let $\epsilon_0$ and $\delta_0=\delta_0(\epsilon_0,T)$ be such that {\bf Step 2} holds with $\epsilon=\epsilon_0$ and $\delta=\delta_0$.
There is $\underbar A_1^1>0$  such that for any $t_0\in\RR$ and  $u_0 \in C^+(\bar\Omega)$,  for any $t\ge T$, if $\sup_{\bar \Omega} u(x,t+t_0;t_0,u_0)\ge \delta_0$, then $\inf_{\bar\Omega} u(x,t+t_0;t_0,u_0)\ge \underbar A_1^1$.

\smallskip

{\bf Step 4.} By the arguments of Lemma \ref{persistence-lm4} (1), the following holds.
 Let $\epsilon_0$ and $\delta_0=\delta_0(\epsilon_0)$ be such that {\bf Steps 1 and 2} hold with $\epsilon=\epsilon_0$ and $\delta=\delta_0$.
There is  $\underbar A_1^2>0$  such that  for any $t_0\in\RR$ and $0<u_0 \in C^+(\bar\Omega)$,
for any $\underbar A_1\le \underbar A_1^2$, if  $\inf_{\bar\Omega} u_0(x)\ge \underbar A_1$, then $\inf_{\bar\Omega} u(x,T+t_0;t_0,u_0)\ge \underbar A_1$.

\smallskip

{\bf Step 5.}   By the arguments of Lemma \ref{persistence-lm5} (1), the following holds.
 Let $\epsilon_0$ and $\delta_0=\delta_0(\epsilon_0,T)$ be such that  {\bf Steps 1 and 2}  hold with $\epsilon=\epsilon_0$ and $\delta=\delta_0$.
For any  $0<u_0 \in C^+(\bar\Omega) $,
if $\inf_{\bar\Omega}u_0(x)\ge \underbar A_1$, then
\begin{equation*}
\label{u-w-lower-bound-eq1}
\underbar A_1\le u(x,t+t_0;t_0,u_0)\le \bar A_1+\epsilon_0\quad \forall\,\, t\ge T,\,\,\, x\in\bar\Omega.
\end{equation*}

\smallskip

{\bf Step 6.} Complete the proof by combining  {\bf Step 5} and  the arguments of equation \eqref{thm2-proof-eq1} in the proof of Theorem \ref{thm-entire-001}.}
\end{proof}

\section{Coexistence}
\label{Coexistenec}

In this section, we study the existence of coexistence states in \eqref{u-v-w-eq00} and prove Theorem \ref{thm-entire-002}.

We first prove a lemma.

\begin{lemma}
\label{persistence-lm6}
Consider
\begin{equation}
\begin{cases}
\label{u-v-ode}
u_t=u\big(a_0(t)-a_1(t)u-a_2(t)v\big)\cr
v_t=v\big(b_0(t)-b_1(t)u-b_2(t)v\big).
\end{cases}
 \end{equation}
Assume \eqref{stability-cond-1-eq1} is satisfied.
 Then there is a  positive entire solution $(u^{**}(t),v^{**}(t))$ of \eqref{u-v-ode}. Moreover, for any $u_0,v_0>0$ and $t_0\in\RR$,
 $$
 (u(t;t_0,u_0,v_0),v(t;t_0,u_0,v_0))-(u^{**}(t),v^{**}(t))\to 0
 $$
 as $t\to\infty$, where $(u(t;t_0,u_0,v_0)$, $v(t;t_0,u_0,v_0))$ is the solution of \eqref{u-v-ode} with
 $(u(t_0;t_0,u_0,v_0)$, $v(t_0;t_0,u_0,v_0))=(u_0,v_0)$.
 In addition, if $a_i(t)$ and $b_i(t)$ are almost periodic, then so is $(u^{**}(t)$, $v^{**}(t))$.
\end{lemma}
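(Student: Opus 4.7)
The plan is to establish the three conclusions in order. First, since \eqref{u-v-ode} is the spatially homogeneous reduction of \eqref{u-v-eq00} and since hypotheses (H4) and (H5) both collapse to \eqref{stability-cond-1-eq1} when $\chi_1=\chi_2=0$, Theorem~\ref{thm-entire-001} applied in this setting yields positive constants $\underbar A_1\le \bar A_1=a_{0,\sup}/a_{1,\inf}$ and $\underbar A_2\le \bar A_2=b_{0,\sup}/b_{2,\inf}$ such that every positive solution of \eqref{u-v-ode} eventually enters and remains in $R:=[\underbar A_1,\bar A_1+1]\times[\underbar A_2,\bar A_2+1]$. To produce a positive entire solution I would pick $t_n\to-\infty$ and let $(u_n,v_n)$ be the solution of \eqref{u-v-ode} starting at $t_n$ from $(\bar A_1,\bar A_2)$; by persistence each $(u_n,v_n)$ is trapped in $R$ once $t-t_n$ is large enough, the right-hand side of \eqref{u-v-ode} is uniformly bounded there, and Arzel\`a--Ascoli extracts a subsequence converging locally uniformly on $\RR$ to an entire solution $(u^{**},v^{**})$ valued in $R$.

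The key step is global attractivity. For two positive bounded solutions $(u_1,v_1)$ and $(u_2,v_2)$, both eventually trapped in $R$, I would introduce the logarithmic Lyapunov functional
\[
V(t)=\alpha\,\bigl|\ln(u_1(t)/u_2(t))\bigr|+\beta\,\bigl|\ln(v_1(t)/v_2(t))\bigr|,
\]
with constants $\alpha,\beta>0$ to be chosen. Setting $p=\ln(u_1/u_2)$ and $q=\ln(v_1/v_2)$, the identities $p'=-a_1(t)(u_1-u_2)-a_2(t)(v_1-v_2)$ and $q'=-b_1(t)(u_1-u_2)-b_2(t)(v_1-v_2)$, together with $\sgn(p)=\sgn(u_1-u_2)$ and the analogue for $q$, give for the Dini upper derivative
\[
D^+V(t)\le -(\alpha a_{1,\inf}-\beta b_{1,\sup})|u_1-u_2|-(\beta b_{2,\inf}-\alpha a_{2,\sup})|v_1-v_2|.
\]
Both coefficients are simultaneously positive for some $\alpha,\beta>0$ precisely when $a_{1,\inf}b_{2,\inf}>a_{2,\sup}b_{1,\sup}$. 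Multiplying the two inequalities in \eqref{stability-cond-1-eq1} and dividing by $a_{0,\inf}b_{0,\inf}$ yields $a_{1,\inf}b_{2,\inf}>(a_{0,\sup}b_{0,\sup})/(a_{0,\inf}b_{0,\inf})\cdot a_{2,\sup}b_{1,\sup}\ge a_{2,\sup}b_{1,\sup}$, so such $\alpha,\beta$ exist. Combined with the two-sided bound $\underbar A_i\,|\ln(u_1/u_2)|\le|u_1-u_2|\le(\bar A_i+1)\,|\ln(u_1/u_2)|$ from persistence and the mean value theorem, this delivers $D^+V(t)\le -cV(t)$ for some $c>0$, so $V(t)\to 0$ exponentially and $|u_1-u_2|+|v_1-v_2|\to 0$. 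Applying this with $(u_2,v_2)=(u^{**},v^{**})$ yields the attractivity claim.

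Finally, attractivity forces $(u^{**},v^{**})$ to be the unique positive bounded entire solution of \eqref{u-v-ode}. Assume $(a_i,b_i)$ are almost periodic. For any $t_n\to\infty$ extract a subsequence along which $(a_i(\cdot+t_n),b_i(\cdot+t_n))$ converges uniformly on $\RR$ to almost periodic limits $(\tilde a_i,\tilde b_i)$; continuous dependence for ODEs and the trapping in $R$ force $(u^{**}(\cdot+t_n),v^{**}(\cdot+t_n))$ to subconverge locally uniformly to the unique positive bounded entire solution of the limit system, and this single-limit property is Bochner's criterion for almost periodicity of $(u^{**},v^{**})$. The periodic and autonomous subcases follow by the same uniqueness argument applied to a $T$-shift or a constant shift of the coefficients. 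The step I expect to be hardest is the Lyapunov estimate: treating the Dini derivative of $V$ carefully at zero crossings of $p$ and $q$, and noticing that \eqref{stability-cond-1-eq1} in fact implies the strict product inequality $a_{1,\inf}b_{2,\inf}>a_{2,\sup}b_{1,\sup}$ needed for the Lyapunov functional to strictly decrease.
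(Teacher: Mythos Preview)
Your argument is correct, but the approach differs from the paper's. The paper's proof of this lemma is almost entirely citation-based: it writes down the explicit rectangle $[s_1,r_1]\times[r_2,s_2]$ and invokes Ahmad's invariant-rectangle lemmas \cite{Ahm} for the trapping, mentions the pullback construction for the entire solution, cites Ahmad's convergence theorem for the global attractivity, and cites Hetzer--Shen \cite{HeSh} for the almost periodicity. No Lyapunov computation appears in the paper.

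Your route is more self-contained. You obtain the absorbing rectangle by specializing Theorem~\ref{thm-entire-001} to $\chi_1=\chi_2=0$ (legitimate, since that theorem is proved earlier and does not rely on this lemma), and you prove attractivity directly via the logarithmic Lyapunov functional $V=\alpha|\ln(u_1/u_2)|+\beta|\ln(v_1/v_2)|$. The key observation that \eqref{stability-cond-1-eq1} implies $a_{1,\inf}b_{2,\inf}>a_{2,\sup}b_{1,\sup}$, and hence the existence of weights $\alpha,\beta$ making $V$ a strict Lyapunov function, is exactly right; this is in fact essentially what underlies Ahmad's cited theorem, so you are reconstructing the cited result rather than taking a genuinely new path. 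The uniqueness of the positive bounded entire solution then follows by applying your exponential estimate $V(t)\le V(t_0)e^{-c(t-t_0)}$ and letting $t_0\to -\infty$, which you should state explicitly since it is needed for the Bochner step. The one place your sketch is slightly loose is the almost-periodicity argument: Bochner's criterion requires uniform convergence on $\RR$, not just local uniform convergence; this upgrade follows from the exponential contraction (the entire solution depends continuously in $L^\infty(\RR)$ on the coefficients in the uniform norm), and is worth one sentence.
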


\begin{proof}
First, let
$$s_1=\frac{b_{2,\inf}a_{0,\inf}-a_{2,\sup}b_{0,\sup}}{b_{2,\inf}a_{1,\sup}-a_{2,\sup}b_{1,\inf}},\quad r_1=\frac{b_{2,\sup}a_{0,\sup}-a_{2,\inf}b_{0,\inf}}{b_{2,\sup}a_{1,\inf}-a_{2,\inf}b_{1,\sup}},$$
 and
 $$ r_2=\frac{a_{1,\inf}b_{0,\inf}-b_{1,\sup}a_{0,\sup}}{a_{1,\inf}b_{2,\sup}-b_{1,\sup}a_{2,\inf}},\quad s_2=\frac{a_{1,\sup}b_{0,\sup}-b_{1,\inf}a_{0,\inf}}{a_{1,\sup}b_{2,\inf}-b_{1,\inf}a_{2,\sup}}.
 $$
  Then
  $$0<s_1\leq r_1 \quad \text{and} \quad 0<r_2\leq s_2.$$

   Next, for given $t_0 \in \mathbb{R}$ and $u_0,v_0\in\RR$,  if $0<u_0\leq r_1$ and ${ v_0\geq  r_2}$,  by
   \cite[Lemma 3.1]{Ahm}, we have
   \begin{equation}
   \label{aux-existence-eq1}
   0<u(t;t_0,u_0,v_0)\leq r_1\quad \text{and}\quad v(t;t_0,u_0,v_0)\geq { r_2}\quad \forall\,\, t\ge t_0.
   \end{equation}
   And if $u_0\geq s_1$ and $0<v_0\leq { s_2}$, by \cite[Lemma 3.2]{Ahm} again,
   \begin{equation}
   \label{aux-existence-eq2}
   u(t;t_0,u_0,v_0)\geq s_1\quad  \text{and} \quad 0<v(t;t_0,u_0,v_0)\leq { s_2}\quad \forall\,\, t\ge t_0.
   \end{equation}
{ Next, by the pullback method, there exists a positive entire solution of \eqref{u-v-ode} which satisfies  $s_1\leq u(t)\leq r_1 \,\, \text{and}\,\,s_2\leq v(t)\leq r_2,$ for all $t \in \mathbb{R}.$ We omit the proof here because a similar proof will be given in Theorem \ref{thm-entire-002}}

Finally, we prove the stability of positive entire solutions and the almost periodicity of positive entire solutions when the coefficients are almost periodic.
Let $(u^{**}(t),v^{**}(t))$ be a positive entire solution of \eqref{u-v-ode} and  let $u_0,v_0>0$ and $t_0\in\RR.$  It follows from \cite[Theorem 1]{Ahm} that
 $$
 (u(t;t_0,u_0,v_0),v(t;t_0,u_0,v_0))-(u^{**}(t),v^{**}(t))\to 0 \quad \text{as} \,\, \to \infty.
 $$
 By \cite[Theorem C]{HeSh}, when $a_i(t)$ and $b_i(t)$ ($i=0,1,2$) are almost periodic in $t$, then positive entire solutions of \eqref{u-v-ode} are unique
 and almost periodic.
 The lemma thus follows.
\end{proof}

We now prove Theorem \ref{thm-entire-002}. Let $T>0$ be fixed and $\underbar A_i$, $\bar A_i$, $\underbar B_i$, and $\bar B_i$ ($i=1,2$) be as in the previous section.

\begin{proof}[Proof of Theorem \ref{thm-entire-002}]
(1) We  first prove the  existence of positive entire solutions. { Let  $u_0, v_0 \in C^0(\bar \Omega)$ be such that $0<\underbar A_1\leq u_0(x)\leq \bar A_1 \, \text{and} \,0<\underbar A_2\leq v_0(x)\leq \bar A_2.$   By Theorem \ref{thm-global-000}(1) and Lemma \ref{persistence-lm5}(1),
\begin{equation}
\label{proof-entire-eq0}
0<\underbar A_1\leq u(x,t+t_0;t_0,u_0,v_0)\leq \bar A_1 \quad \text{and} \quad 0<\underbar A_2\leq v(x,t+t_0;t_0,u_0,v_0)\leq \bar A_2
 \end{equation}
 for all  $x \in \bar \Omega$,  $t\geq T$, and $t_0 \in \mathbb{R}$.
For $n \in \mathbb N$ with $n>T$, set $t_n=-n,$  $u_n=u(\cdot,0;t_n,u_0,v_0)$ and $v_n=v(\cdot,0;t_n,u_0,v_0).$  Then by parabolic regularity there exist $t_{n_k} \in \mathbb{N},$ $u^{**}_0, \, v^{**}_0 \in C^0(\bar{\Omega})$ such that $$u_{n_k} \to u^{**}_0 \quad \text{and} \quad  v_{n_k} \to v^{**}_0 \quad \text{in}\,\, C^0(\bar{\Omega}).$$ We have $u(\cdot,t;t_{n_k},u_0,v_0)=u(\cdot,t;0,u(\cdot,0;t_{n_k},u_0,v_0),v(\cdot,0;t_{n_k},u_0,v_0)),$ and $v(\cdot,t;t_{n_k},u_0,v_0)=v(\cdot,t;0,u(\cdot,0;t_{n_k},u_0,v_0),v(\cdot,0;t_{n_k},u_0,v_0)).$ Thus for $t\geq 0$ we have
 $$ (u(\cdot,t;t_{n_k},u_0,v_0),v(\cdot,t;t_{n_k},u_0,v_0)) \to (u(\cdot,t;0,u^{**}_0,v^{**}_0),v(\cdot,t;0,u^{**}_0,v^{**}_0)) \, \text{in} \, C^0(\bar{\Omega})\times C^0(\bar{\Omega}) .$$ Moreover
 $$
0<\underbar A_1\leq u(x,t;0,u^{**}_0,v^{**}_0)\leq \bar A_1 \quad \text{and} \quad 0<\underbar A_2\leq v(x,t;0,u^{**}_0,v^{**}_0)\leq \bar A_2\quad \forall\,\, x \in \Omega,\,\, t \geq 0.
$$

We now prove that $(u(\cdot,t;0,u^{**}_0,v^{**}_0),v(\cdot,t;0,u^{**}_0,v^{**}_0))$ has backward extension.  In  order to prove that, fix $m \in \mathbb{N}$ and define $u^m_n=u(\cdot,-m;t_n,u_0,v_0)$ and $v^m_n=v(\cdot,-m;t_n,u_0,v_0)$ for all $n>m +T.$ Then by parabolic regularity, without loss of generality,
we may assume that  there exist  $u^{**}_m, \, v^{**}_m \in C^0(\bar{\Omega})$ such that
$$
u^m_{n_k} \to u^{**}_m \quad \text{and} \quad  v^m_{n_k} \to v^{**}_m \quad \text{in}\,\, C^0(\bar{\Omega}).
$$
 Furthermore we have $u(\cdot,t;t_{n_k},u_0,v_0)=u(\cdot,t;-m,u(\cdot,-m;t_{n_k},u_0,v_0),v(\cdot,-m;t_{n_k},u_0,v_0)),$ and $v(\cdot,t;t_{n_k},u_0,v_0)=u(\cdot,t;-m,u(\cdot,-m;t_{n_k},u_0,v_0),v(\cdot,-m;t_{n_k},u_0,v_0)).$ Therefore  we have
 $$
 (u(\cdot,t;t_{n_k},u_0,v_0),v(\cdot,t;t_{n_k},u_0,v_0)) \to (u(\cdot,t;-m,u^{**}_m,v^{**}_m),v(\cdot,t;-m,u^{**}_m,v^{**}_m)) \, \text{in} \, C^0(\bar{\Omega})\times C^0(\bar{\Omega})
  $$
  for all $t\geq -m$ , which implies that $(u(\cdot,t;0,u^{**}_0,v^{**}_0),v(\cdot,t;0,u^{**}_0,v^{**}_0))$ has backward extension  in the sense that
$$
(u(\cdot,t;0,u^{**}_0,v^{**}_0),v(\cdot,t;0,u^{**}_0,v^{**}_0))=(u(\cdot,t;-m,u^{**}_m,v^{**}_m),v(\cdot,t;-m,u^{**}_m,v^{**}_m))
$$
 for all { $t> -m$ } and $m \in \mathbb{N}.$ Moreover
$$
0<\underbar A_1\leq u(\cdot,t;-m,u^{**}_m,v^{**}_m)\leq \bar A_1 \quad \text{and} \quad 0<\underbar A_2\leq v(\cdot,t;-m,u^{**}_m,v^{**}_m)\leq \bar A_2\,\,\, \forall\, x \in \Omega,\,\, t \geq -m.
$$
Set { $u^{**}(x,t)=u(x,t;0,u^{**}_0,v^{**}_0)$, $v^{**}(x,t)=v(x,t;0,u^{**}_0,v^{**}_0)$}, and $w^{**}=(-\Delta+I)^{-1}(ku^{**}+lv^{**}).$
 Then $(u^{**}(x,t),v^{**}(x,t),w^{**}(x,t))$ is a positive bounded entire solution of \eqref{u-v-w-eq00}.
}

\medskip

(i) Assume that $a_i(t+T,x)=a_i(t,x)$ and  $b_i(t+T,x)=b_i(t,x)$ for $i=0,1,2$. Set
  \begin{equation}
  \label{entire-solut-eq1}
  E(T)=\{(u_0,v_0) \in C^0(\bar{\Omega})\times C^0(\bar{\Omega}) \,|\, 0<\underbar A_1\leq u_0(x)\leq \bar A_1 \, \text{and} \,0<\underbar A_2\leq v_0(x)\leq \bar A_2\}.
    \end{equation}
    Note that $E$ is nonempty, closed, convex and bounded subset of $C^0(\bar{\Omega})\times C^0(\bar{\Omega}).$ Define  the map
    $  \mathcal{T}(T):E(T) \to C^0(\bar{\Omega})\times C^0(\bar{\Omega})$ by
    $$\mathcal{T}(T)(u_0,v_0)=(u(\cdot,T;0,u_0,v_0),v(\cdot,T;0,u_0,v_0)).
    $$
Note that $\mathcal{T}(T)$ is well defined, $\mathcal{T}(T) E(T) \subset E(T),$ and continuous by continuity with respect to initial conditions. Moreover by regularity and Arzella-Ascoli's Theorem, $\mathcal{T}(T)$ is completely continuous and therefore by Schauder fixed point there exists $(u_T,v_T) \in E(T) $ such that $(u(\cdot,T;0,u_T,v_T),v(\cdot,T;0,u_T,v_T))=(u_T,v_T).$ Then $((u(\cdot,t;0,u_T,v_T),v(\cdot,t;0,u_T,v_T)$, $w(\cdot,t;0,u_T,v_T)))$ is a positive periodic  solution of \eqref{u-v-w-eq00} with periodic $T.$

(ii)  Assume that $a_i(t,x)\equiv a_i(x)$ and $b_i(t,x)\equiv a_i(x)$ $(i=0,1,2$). In this case, each $\tau>0$ is a period for $a_i$ and $b_i$ . By (i), there exist $(u^\tau,v^\tau) \in E(\tau)$ such that $(u(\cdot,t;0,u^\tau,v^\tau),v(\cdot,t;0,u^\tau,v^\tau)$, $w(\cdot,t;0,u^\tau,v^\tau))$
is a positive  periodic solution of \eqref{u-v-w-eq00} with period $\tau$.

Observe that $C^0(\bar\Omega)\subset L^p(\Omega)$ for any $1\le p<\infty$. Choose
$p>1$ and $\alpha\in (1/2,1)$ are such that $X^\alpha \hookrightarrow C^1(\bar\Omega)$, where $X^\alpha =D(A^\alpha)$ with the graph norm
$\|u\|_\alpha=\|A^\alpha u\|_{L^p(\Omega)}$ and $A=I-\Delta$ with domain $D(A)=\{u\in W^{2,p}(\Omega)\,|\, \frac{\p u}{\p n}=0$ on $\p\Omega\}$.

Note that there is $\tilde M>0$ such that  for each $\tau>0$ and  $(u_0,v_0) \in E(\tau),$ $\|u(\cdot,t;0,u_0,v_0)\|_{\alpha}+\|v(\cdot,t;0,u_0,v_0)\|_{\alpha} \leq \tilde M $ for each $1\leq t \leq 2.$
  Let $\tau_n=\frac{1}{n},$ then  there exists $u_n,v_n \in E(\frac{1}{n})$ such that $(u(\cdot,t;0,u_n,v_n),v(\cdot,t;0,u_n,v_n)$, $w(\cdot,t;0,u_n,v_n))$ is periodic with period $\tau_n$ and
\begin{equation}
\label{entire-eq5}
\|u_n\|_{\alpha}+\|v_n\|_{\alpha}=\|u(\cdot, N\tau_n;0, u_n,v_n)\|_{\alpha}+\|v(\cdot, N\tau_n;0, u_n,v_n)\|_{\alpha} \leq \tilde M,
 \end{equation}
 where $N$ is such that $1\leq N\tau _n \leq 2.$

{ We claim that there is $\delta_1>0$  and $\delta_2>0$ such that
\vspace{-0.05in}\begin{equation}
\label{entire-eq6}
\|u_n\|_{\infty} \ge \delta_1 \quad \forall\,\,  n\ge 1.
\vspace{-0.05in}\end{equation}
and
\vspace{-0.05in}\begin{equation}
\label{entire-eq6bis}
\|v_n\|_{\infty} \ge \delta_2 \quad \forall\,\,  n\ge 1.
\vspace{-0.05in}\end{equation}
Since the proof of \eqref{entire-eq6} and \eqref{entire-eq6bis} are similar, we only prove \eqref{entire-eq6}.
 Suppose by contradiction that  \eqref{entire-eq6} does not hold. Then there exists $n_k $  such that $\|u_{n_k}\|_{\infty} < \frac{1}{n_k}$ for every  $k \ge 1$.  Let  $k_0$ such that $\frac{1}{n_{k}}<\delta_0$ for all $k\ge k_0.$  By Lemma \ref{persistence-lm1} and the proof of Lemma \ref{persistence-lm2}, we get that $u(\cdot,t;0,u_{n_{k}},v_{n_{k}}) \ge u(t;\inf u_{n_k})$ for all $t>0$ and $k\ge k_0,$ where $u(t;\inf u_{n_k})$ is the solution of
$$
 u_t= u\Big(a_{0,\inf}-a_{2,\sup}\bar A_2-\frac{\chi_1 k}{d_3}\epsilon_0-(a_{1,\sup}-\frac{\chi_1 k}{d_3}) u\Big)
$$
with $u(0;\inf u_{n_k})=\inf u_{n_k}$.
Let $\delta_*=\frac{ a_{0,\inf}-a_{2,\sup}\bar A_2-\frac{\chi_1 k}{d_3}\epsilon_0}{2(a_{1,\sup}-\frac{\chi_1 k}{d_3})}$ and choose $k$ large enough such that $\frac{1}{n_k}<\delta_*$.
 There is  $t_0>0$ such that $ u(t;\inf u_{n_k})>\delta_*$  for all $t\ge t_0$.  Then we have
\vspace{-0.05in} $$
 u_{n_k}(x)=u(\cdot,m \tau_{n_k};0,u_{n_k},u_{n_k})\ge u(m \tau_{n_k};\inf u_{n_k})>\delta^*
\vspace{-0.05in} $$
 for all $m\in \NN$ satisfying that $m\tau_{n_k}>t_0$. This is a contradiction. Therefore,
 \eqref{entire-eq6} holds.

 By \eqref{entire-eq5} and Arzela-Ascoli theorem, there exist $\{n_k\}$, $(u^{**},v^{**}) \in C^0(\bar{\Omega})\times C^0(\bar{\Omega})$ such that $(u_{n_k},u_{n_k})$  converges to $(u^{**},v^{**})$ in $C^0(\bar{\Omega})\times C^0(\bar{\Omega})$. By \eqref{entire-eq6} and \eqref{entire-eq6bis}, we have that
 $\|u^{**}(\cdot)\|_{\infty}\ge \delta_1 $ and $\|v^{**}(\cdot)\|_{\infty}\ge \delta_2.$}
We claim that $(u(\cdot,t;0,u^{**},v^{**}),v(\cdot,t;0,u^{**},v^{**})$, $w(\cdot,t;0,u^{**},v^{**}))$
 is a steady state solution of \eqref{u-v-w-eq00}, that is,
\begin{equation}
\label{entire-eq7}
u(\cdot,t;0,u^{**},v^{**})=u^{**}(\cdot)\quad \text{and}\quad v(\cdot,t;0,u^{**},v^{**})=v^{**}(\cdot)\quad \text{for all} \, t\ge 0.
\vspace{-0,05in}\end{equation}
In fact,
let $\epsilon>0$ be fix and let $t>0$. Note that
 \vspace{-0,05in}
 $$ [n_k  t]\tau_{n_k}=\frac{[n_kt]}{n_k}\leq t \leq \frac{[n_kt]+1}{n_k}=([n_k t]+1)\tau_{n_k}.
$$
Then, we can  choose $k$ large enough such that
\[|u(x,t;0,u^{**},v^{**})-u(x,t;0,u_{n_k},v_{n_k})|< \epsilon,\quad |u_{n_k}(x)-u^{**}(x)|< \epsilon,\quad |v_{n_k}(x)-v^{**}(x)|< \epsilon,\]
\[ |v(x,t;0,u^{**},v^{**})-v(x,t;0,u_{n_k},v_{n_k})|< \epsilon,\quad|v(x,\frac{[n_kt]}{n_k};0,u_{n_k},v_{n_k})-v(x,t;0,u_{n_k},v_{n_k})|< \epsilon,\]
\[ |u(x,\frac{[n_kt]}{n_k};0,u_{n_k},v_{n_k})-u(x,t;0,u_{n_k},v_{n_k})|< \epsilon.\]
for all $x\in\bar\Omega$.
We then have
\vspace{-0,05in}\begin{align*}
|u(x,t;0,u^{**},v^{**})-u^{**}|&\le |u(x,t;0,u^{**},v^{**})-u(x,t;0,u_{n_k},v_{n_k})| +|u_{n_k}(x)-u^{**}(x)|\\
&\quad +|u(x,t;0,u_{n_k},v_{n_k})-u(x,[n_k t]\tau_{n_k};0,u_{n_k},v_{n_k})|<3 \epsilon\quad \forall\,\, x\in\bar\Omega,
\end{align*}
and
\vspace{-0,05in}\begin{align*}
|v(x,t;0,u^{**},v^{**})-v^{**}|&\le |v(x,t;0,u^{**},v^{**})-v(x,t;0,u_{n_k},v_{n_k})| +|v_{n_k}(x)-v^{**}(x)|\\
&\quad +|v(x,t;0,u_{n_k},v_{n_k})-v(x,[n_k t]\tau_{n_k};0,u_{n_k},v_{n_k})|<3 \epsilon\quad \forall\,\, x\in\bar\Omega.
\end{align*}
Letting $\epsilon\to 0$, \eqref{entire-eq7} follows.

(iii) { Note that solutions of the following system,
  $$
  \begin{cases}
  u_t=u(a_0(t)-a_1(t)u-a_2(t)v)\cr
  v_t=v(b_0(t)-b_1(t)u-b_2(t)v)\cr
  0=ku(t)+lv(t)-\lambda w(t)
  \end{cases}
  $$
  are
  spatially homogeneous solutions $(u(t),v(t),w(t))$ of \eqref{u-v-w-eq00}.} By (H4) and Remark \ref{rk-2},
  \eqref{stability-cond-1-eq1} is satisfied.
   (iii) then
  follows { from} Lemma \ref{persistence-lm6}.

(2) It follows from the similar arguments as those in (1).
\end{proof}

\section{Extinction of one of the species}

In this section, our aim is to find conditions on the parameters  which guarantee
 the extinction of {  the species $u$}.
First we prove a lemma.

Assume (H1) or (H2). For given { $u_0,v_0\in C^+(\bar\Omega)$}, let
  $$L_1(t_0,u_0,v_0)=\limsup_{t \to \infty}(\max_{x \in \bar{\Omega}}u(x,t;t_0,u_0,v_0)),\,\,\, l_1(t_0,u_0,v_0)=\liminf_{t \to \infty}(\min_{x \in \bar{\Omega}}u(x,t;t_0,u_0,v_0)),$$
  and
  $$L_2(t_0,u_0,v_0)=\limsup_{t \to \infty}(\max_{x \in \bar{\Omega}}v(x,t;t_0,u_0,v_0)),\,\,\,
   l_2(t_0,u_0,v_0)=\liminf_{t \to \infty}(\min_{x \in \bar{\Omega}}v(x,t;t_0,u_0,v_0)).$$
  If no confusion occurs, we may write $L_i(t_0,u_0,v_0)$ and $l_i(t_0,u_0,v_0)$ as
  $L_i$ and $l_i$ ($i=1,2$) respectively.
By Theorem \ref{thm-global-000} we have
 $$0\leq l_1 \leq L_1 <\infty,\quad 0\leq l_2 \leq L_2 < \infty.$$
Furthermore, using the definition of $\limsup$ and  of $\liminf,$ and elliptic regularity,
we get that given $\epsilon >0,$ there exists $T_{\epsilon}>0$ such that
\begin{equation}
\label{eq-005}
l_1-\epsilon \leq u(x,t) \leq L_1+\epsilon , \quad l_2-\epsilon \leq v(x,t) \leq L_2+\epsilon,\quad \forall \,\,t>T_{\epsilon}.
\end{equation}

\begin{lemma}
\label{lem-extinction-01}
\begin{itemize}
\item[(1)] Assume $a_{1,\inf}>\frac{k \chi_1}{d_3}$ and $a_{2,\inf}\ge \frac{l \chi_1}{d_3}$. Then
\begin{equation}\label{extinction-eq-000}
L_1\leq \frac{\left\{ a_{0,\sup}-a_{2,\inf} l_2\right\}_{+}}{a_{1,\inf}-\frac{\chi_{1}k}{d_{3}}}.
\end{equation}

\item[(2)] Assume  $b_{2,\inf}>\frac{l \chi_2}{d_3}$. Then
\begin{equation}\label{extinction-eq-04}
L_2\leq \frac{\left\{ b_{0,\sup}-\frac{\chi_{2}l}{d_{3}}l_2+{\Big(b_{1,\inf}-k\frac{\chi_2}{d_3}\Big)_-L_1}\right\}_{+}}{b_{2,\inf}-\frac{\chi_{2}l}{d_{3}}},
\end{equation}
and
\begin{equation}\label{extinction-eq-05}
l_2\geq \frac{\left\{ b_{0,\inf}-{ \Big(\Big(b_{1,\sup}-k\frac{\chi_2}{d_3}\Big)_++k\frac{\chi_2}{d_3}\Big)L_1}-\frac{\chi_{2}l}{d_{3}}L_2\right\}_{+}}{b_{2,\sup}-\frac{\chi_{2}l}{d_{3}}}.
\end{equation}
\end{itemize}
\end{lemma}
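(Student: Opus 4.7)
The plan is to convert the chemotaxis system into a reaction--drift--diffusion pair without $\Delta w$, and then couple the resulting scalar inequalities with pointwise bounds on $w$ obtained from the elliptic maximum principle. Using the third equation of \eqref{u-v-w-eq00} in the form $d_3\Delta w=\lambda w-ku-lv$, I will rewrite the first equation as
\[
u_t=d_1\Delta u-\chi_1\nabla u\cdot\nabla w+u\Big(a_0-(a_1-\tfrac{\chi_1 k}{d_3})u-(a_2-\tfrac{\chi_1 l}{d_3})v-\tfrac{\chi_1\lambda}{d_3}w\Big),
\]
and similarly for $v_t$. Applied to the elliptic equation $-d_3\Delta w+\lambda w=ku+lv$ with Neumann conditions, the maximum principle gives, at each time $t$, both $\min_{x}w(\cdot,t)\ge\frac{1}{\lambda}\min_{x}(ku+lv)$ and $\max_{x}w(\cdot,t)\le\frac{1}{\lambda}\max_{x}(ku+lv)$. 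Combined with \eqref{eq-005}, this will yield, for any $\epsilon>0$ and all $t\ge T_\epsilon$, the pointwise bracket $l(l_2-\epsilon)/\lambda \le w(x,t) \le (k(L_1+\epsilon)+l(L_2+\epsilon))/\lambda$ throughout $\bar\Omega$.

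For part (1), the hypothesis $a_{2,\inf}\ge\chi_1 l/d_3$ is exactly what lets the $(\chi_1 l/d_3)$ piece coming from the substitution combine cleanly with the corresponding piece from the lower bound on $w$: the cancellation $-(a_{2,\inf}-\chi_1 l/d_3)(l_2-\epsilon)-\chi_1 l(l_2-\epsilon)/d_3=-a_{2,\inf}(l_2-\epsilon)$ is precisely what produces the sharper coefficient $a_{2,\inf}$ (rather than $a_{2,\inf}-\chi_1 l/d_3$) in the claimed bound. This yields, pointwise for $t\ge T_\epsilon$,
\[
u_t\le d_1\Delta u-\chi_1\nabla u\cdot\nabla w+u\bigl(M_\epsilon-(a_{1,\inf}-\tfrac{\chi_1 k}{d_3})u\bigr),\qquad M_\epsilon=a_{0,\sup}-a_{2,\inf}(l_2-\epsilon).
\]
I will then compare $u$ against the spatially constant supersolution $\phi_\epsilon(t)$ solving the logistic ODE $\phi'=\phi(M_\epsilon-(a_{1,\inf}-\chi_1 k/d_3)\phi)$ with $\phi_\epsilon(T_\epsilon)=\|u(\cdot,T_\epsilon)\|_\infty$. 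Since $\phi_\epsilon$ is $x$-independent, the drift term $-\chi_1\nabla\phi_\epsilon\cdot\nabla w$ vanishes, so the usual parabolic comparison principle for the linear equation satisfied by $\phi_\epsilon-u$ applies and gives $u(x,t)\le\phi_\epsilon(t)\to(M_\epsilon)_+/(a_{1,\inf}-\chi_1 k/d_3)$ as $t\to\infty$. Taking $\limsup$ and then $\epsilon\downarrow 0$ yields \eqref{extinction-eq-000}.

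Part (2) will follow by the same three-step template applied to the $v$-equation. For \eqref{extinction-eq-04} I will estimate ``at the max of $v$'' using the lower bound on $w$ and the splitting $-(b_1-\chi_2 k/d_3)u\le (b_{1,\inf}-\chi_2 k/d_3)_-(L_1+\epsilon)$, obtained by treating the two cases $b_1\ge\chi_2 k/d_3$ and $b_1<\chi_2 k/d_3$ separately; a logistic supersolution whose asymptotic value matches the claim then finishes the argument. For \eqref{extinction-eq-05} I will estimate ``at the min of $v$'' using the upper bound on $w$: there the contribution $-\chi_2\lambda w/d_3\ge-\chi_2(kL_1+lL_2)/d_3+O(\epsilon)$ supplies an extra $\chi_2 k/d_3$ in the effective coefficient on $L_1$ (producing precisely $(b_{1,\sup}-\chi_2 k/d_3)_++\chi_2 k/d_3$) together with the extra $-\chi_2 l L_2/d_3$ term; comparison with the corresponding logistic subsolution and sending $\epsilon\downarrow 0$ completes the proof. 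The main obstacle, in my view, is purely bookkeeping: choosing which of the upper/lower bound on $w$ to plug in at each stage and carefully splitting terms like $(b_1-\chi_2 k/d_3)u$ according to the sign of their coefficient so that the positive and negative parts in the final inequalities come out exactly as stated.
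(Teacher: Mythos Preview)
Your proposal is correct and follows essentially the same route as the paper: rewrite $\nabla\cdot(u\nabla w)$ via $d_3\Delta w=\lambda w-ku-lv$, bound $w$ pointwise using the elliptic maximum principle together with \eqref{eq-005}, split the $(b_1-\chi_2k/d_3)u$ term according to sign, and then compare with a spatially constant logistic super-/subsolution before letting $\epsilon\downarrow 0$. The paper carries out exactly these steps but more tersely, invoking ``parabolic comparison principle'' without spelling out the logistic ODE or the elliptic $w$-bounds you make explicit.
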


\begin{proof}
(1) From the first equation of \eqref{u-v-w-eq00}, \eqref{eq-005},  and the fact that $a_{2,\inf}\geq \frac{\chi_1l}{d_3}$,  we have
\begin{align*}
&u_t-d_1\Delta u+\chi_1 \nabla u \cdot \nabla w\nonumber\\
&=u\left\{ a_0(t,x)-(a_1(t,x)-\frac{\chi_1 }{d_3}k)u -(a_2(t,x)-l\frac{\chi_1 }{d_3})v-\frac{\chi_1 }{d_3}\lambda w\right\}\nonumber\\
& \leq u\left\{ a_{0,\sup}-(a_{1,\inf}-\frac{\chi_1 }{d_3}k)u-a_{2,\inf}l_2+\left(a_{2,\sup}+k\frac{\chi_1}{d_3}\right)\epsilon\right\}
\end{align*}
for $t\ge T_\epsilon$,
and thus since $a_{1,\inf}>\frac{\chi_{1}k}{d_{3}},$ \eqref{extinction-eq-000} follows {from parabolic comparison principle.}

(2) From the second equation of \eqref{u-v-w-eq00} and \eqref{eq-005}, we have that
\begin{align*}
&v_t-d_2\Delta v+\chi_2 \nabla v \cdot \nabla w\nonumber\\
&=v\left\{ b_0(t,x)-(b_2(t,x)-\frac{\chi_2 }{d_3}k)v -(b_1(t,x)-k\frac{\chi_2 }{d_3})u-\frac{\chi_2 }{d_3}\lambda w\right\}\nonumber\\
& \leq v\left\{ b_{0,\sup}-(b_{2,\inf}-\frac{\chi_2 }{d_3}k)v+{ \Big(b_{1,\inf}-k\frac{\chi_2}{d_3}\Big)_-L_1}-l\frac{\chi_2}{d_3}l_2+\Big((k+l)\frac{\chi_{2}}{d_{3}}
+{\Big(b_{1,inf}-k\frac{\chi_2}{d_3}\Big)_-}\Big)\epsilon\right\}
\end{align*}
for $t\ge T_\epsilon$,
and   \eqref{extinction-eq-04} follows {from  parabolic comparison principle.}

Similarly, we have
\begin{align*}
&v_t-d_2\Delta v+\chi_2 \nabla v \cdot \nabla w\nonumber\\
&=v\left\{ b_0(t,x)-(b_2(t,x)-\frac{\chi_2 }{d_3}k)v -(b_1(t,x)-k\frac{\chi_2 }{d_3})u-\frac{\chi_2 }{d_3}\lambda w\right\}\nonumber\\
&\geq v\left\{ b_{0,\inf}-(b_{2,\sup}-\frac{\chi_2 }{d_3}k)v-{ \Big(b_{1,\sup}-k\frac{\chi_2}{d_3}\Big)_+L_1-k\frac{\chi_2}{d_3}L_1}-l\frac{\chi_2}{d_3}L_2
-\Big(l\frac{\chi_{2}}{d_{3}}+{\Big(b_{1,\sup}-k\frac{\chi_2}{d_3}\Big)_+}\Big)\epsilon\right\}
\end{align*}
for $t\ge T_\epsilon$,
and   \eqref{extinction-eq-05} thus follows from { parabolic comparison principle.}
\end{proof}

Now we prove Theorem \ref{thm-extinction}.

\begin{proof}[Proof of Theorem \ref{thm-extinction}]
We first  prove that $L_1=0$.

Suppose by contradiction that $L_1>0.$
Then by \eqref{extinction-eq-000} and \eqref{Asymp-exclusion-eq-00},  we have
\begin{equation}\label{Asymp-exclusion-eq-08}
l_2<\frac{a_{0,\sup}}{a_{2,\inf}}.
\end{equation}
By  \eqref{Asymp-exclusion-eq-03}, we have
 \begin{align*}
a_{2,\inf}\big(b_{0,\inf}(b_{2,\inf}-l\frac{\chi_2}{d_3})-b_{0,\sup}\frac{\chi_2}{d_3}l\big) &\geq a_{0,\sup}\big((b_{2,\inf}-l\frac{\chi_2}{d_3})(b_{2,\sup}-l\frac{\chi_2}{d_3})-(l\frac{\chi_2}{d_3})^2\big)\nonumber\\
&=a_{0,\sup}\big((b_{2,\inf}-l\frac{\chi_2}{d_3})b_{2,\sup}-l\frac{\chi_2}{d_3}b_{2,\inf}\big)\nonumber\\
&\geq a_{0,\sup}(b_{2,\inf}-2l\frac{\chi_2}{d_3})b_{2,\sup}.
\end{align*}
 This together with the fact that $a_{2,\inf}\big(b_{0,\inf}(b_{2,\inf}-l\frac{\chi_2}{d_3})-b_{0,\sup}\frac{\chi_2}{d_3}l\big)\leq a_{2,\inf}b_{0,\sup}(b_{2,\inf}-2l\frac{\chi_2}{d_3}), $ we get
$$
a_{2,\inf}b_{0,\sup}(b_{2,\inf}-2l\frac{\chi_2}{d_3}) \geq a_{0,\sup}(b_{2,\inf}-2l\frac{\chi_2}{d_3})b_{2,\sup},
$$
which combines with $b_{2,\inf}-2l\frac{\chi_2}{d_3}>0$ implies
$$
a_{2,\inf}b_{0,\sup} \geq a_{0,\sup}b_{2,\sup} \geq a_{0,\sup} 2l\frac{\chi_2}{d_3}.
$$
Therefore
 \begin{equation}
\label{Asymp-exclusion-eq-09}
b_{0,\sup}-\frac{\chi_{2}l}{d_{3}}l_2> b_{0,\sup}-\frac{\chi_{2}l}{d_{3}}\frac{a_{0,\sup}}{a_{2,\inf}}\geq 0.
\end{equation}

From \eqref{extinction-eq-05}, we get
\begin{equation*}
    \frac{ l\chi_2}{d_3}L_2\geq b_{0,\inf}-{ \Big(\Big(b_{1,\sup}-k\frac{\chi_2}{d_3}\Big)_++k\frac{\chi_2}{d_3}\Big)L_1}-(b_{2,\sup}-\frac{\chi_{2}}{d_{3}}l)l_2.
\end{equation*}
Thus, from  \eqref{extinction-eq-000}  and $L_1>0,$ we get
\begin{equation*}
   \frac{l\chi_2}{d_3}L_2\geq b_{0,\inf} -{ \Big(\Big(b_{1,\sup}-k\frac{\chi_2}{d_3}\Big)_++k\frac{\chi_2}{d_3}\Big)}\frac{\left\{ a_{0,\sup}-a_{2,\inf} l_2\right\}}{a_{1,\inf}-\frac{\chi_{1}k}{d_{3}}}-(b_{2,\sup}-\frac{\chi_{2}}{d_{3}}l)l_2.
\end{equation*}
Therefore
\begin{align*}
  \frac{l\chi_2}{d_3}(a_{1,\inf}-\frac{\chi_{1}k}{d_{3}})L_2
  &\geq b_{0,\inf}(a_{1,\inf}-\frac{\chi_{1}k}{d_{3}})- { \Big(\Big(b_{1,\sup}-k\frac{\chi_2}{d_3}\Big)_++k\frac{\chi_2}{d_3}\Big)}a_{0,\sup}\nonumber\\
  &-\Big((a_{1,\inf}-\frac{\chi_{1}k}{d_{3}})(b_{2,\sup}-\frac{\chi_{2}}{d_{3}}l)-{ \Big(\Big(b_{1,\sup}-k\frac{\chi_2}{d_3}\Big)_++k\frac{\chi_2}{d_3}\Big)}a_{2,\inf}\Big)l_2.
\end{align*}
It follows from the last inequality, \eqref{Asymp-exclusion-eq-09},  and  \eqref{extinction-eq-04} that
\begin{align*}
& \frac{l\chi_2}{d_3}(a_{1,\inf}-\frac{\chi_{1}k}{d_{3}})\frac{\Big\{ b_{0,\sup}-\frac{\chi_{2}l}{d_{3}}l_2+{\Big(b_{1,\inf}-k\frac{\chi_2}{d_3}\Big)_-L_1}\Big\}}{b_{2,\inf}-\frac{\chi_{2}l}{d_{3}}}\nonumber\\
&\geq b_{0,\inf}(a_{1,\inf}-\frac{\chi_{1}k}{d_{3}})- { \Big(\Big(b_{1,\sup}-k\frac{\chi_2}{d_3}\Big)_++k\frac{\chi_2}{d_3}\Big)}a_{0,\sup}\nonumber\\
  &-\Big((a_{1,\inf}-\frac{\chi_{1}k}{d_{3}})(b_{2,\sup}-\frac{\chi_{2}}{d_{3}}l)-{ \Big(\Big(b_{1,\sup}-k\frac{\chi_2}{d_3}\Big)_++k\frac{\chi_2}{d_3}\Big)}a_{2,\inf}\Big)l_2.
\end{align*}
Therefore from \eqref{extinction-eq-000}, we get
\begin{align*}
& \frac{l\chi_2}{d_3}(a_{1,\inf}-\frac{\chi_{1}k}{d_{3}})\frac{\Big\{ b_{0,\sup}-\frac{\chi_{2}l}{d_{3}}l_2+{\Big(b_{1,\inf}-k\frac{\chi_2}{d_3}\Big)_-\frac{\Big\{ a_{0,\sup}-a_{2,\inf} l_2\Big\}}{a_{1,\inf}-\frac{\chi_{1}k}{d_{3}}}}\Big\}}{b_{2,\inf}-\frac{\chi_{2}l}{d_{3}}}\nonumber\\
&\geq b_{0,\inf}(a_{1,\inf}-\frac{\chi_{1}k}{d_{3}})- { \Big(\Big(b_{1,\sup}-k\frac{\chi_2}{d_3}\Big)_++k\frac{\chi_2}{d_3}\Big)}a_{0,\sup}\nonumber\\
  &-\Big((a_{1,\inf}-\frac{\chi_{1}k}{d_{3}})(b_{2,\sup}-\frac{\chi_{2}}{d_{3}}l)-{ \Big(\Big(b_{1,\sup}-k\frac{\chi_2}{d_3}\Big)_++k\frac{\chi_2}{d_3}\Big)}a_{2,\inf}\Big)l_2.
\end{align*}
Thus
\begin{align}\label{ra-00001}
	& \Big\{\underbrace{(a_{1,\inf}-\frac{\chi_{1}k}{d_{3}})\Big[(b_{2,\inf}-\frac{\chi_{2}l}{d_{3}})(b_{2,\sup}-\frac{\chi_{2}l}{d_{3}})-(l\frac{\chi_2}{d_3})^2\Big]}_{B_1}\Big\}l_2\nonumber\\
&-\Big\{\underbrace{\Big[{ \Big(\Big(b_{1,\sup}-k\frac{\chi_2}{d_3}\Big)_++k\frac{\chi_2}{d_3}\Big)}(b_{2,\inf}-\frac{\chi_{2}l}{d_{3}})+\frac{l\chi_2}{d_3}{\Big(b_{1,\inf}-k\frac{\chi_2}{d_3}\Big)_-}\Big]a_{2,\inf}}_{B_2}\Big\}l_2\nonumber\\
&\geq \underbrace{\left(b_{0,\inf}(b_{2,\inf}-\frac{\chi_{2}l}{d_{3}})-l\frac{\chi_2}{d_3}b_{0,sup}\right)\left(a_{1,\inf}-\frac{\chi_{1}k}{d_{3}}\right)}_{A_1}\nonumber\\
&-\underbrace{\Big[{ \Big(\Big(b_{1,\sup}-k\frac{\chi_2}{d_3}\Big)_++k\frac{\chi_2}{d_3}\Big)}(b_{2,\inf}-\frac{\chi_{2}l}{d_{3}})+\frac{l\chi_2}{d_3}{ \Big(b_{1,\inf}-k\frac{\chi_2}{d_3}\Big)_-}\Big]a_{0,\sup}}_{A_2}
\end{align}
Then, inequality \eqref{ra-00001} is equivalent to
\begin{equation}\label{ra-aa1}
B l_2\geq A,
\end{equation}
with $B=B_1-B_2$ and $A=A_1-A_2$. Note that  \eqref{Asymp-exclusion-eq-04} yields that   $A>0.$ This combined with \eqref{ra-aa1} implies  that   $B>0$.  Therefore, inequality \eqref{ra-aa1} becomes
$$
l_2\geq \frac{A}{B}.
$$
Then thanks  to equation \eqref{Asymp-exclusion-eq-08}, we get
$$
B>\frac{a_{2,\inf}}{a_{0,\sup}}A.
$$
That means
\begin{align*}
 &a_{0,\sup}(a_{1,\inf}-\frac{\chi_{1}k}{d_{3}})\Big[(b_{2,\inf}-\frac{\chi_{2}l}{d_{3}})(b_{2,\sup}-\frac{\chi_{2}l}{d_{3}})-(l\frac{\chi_2}{d_3})^2\Big]\nonumber\\
 &-a_{0,\sup}\Big[{ \Big(\Big(b_{1,\sup}-k\frac{\chi_2}{d_3}\Big)_++k\frac{\chi_2}{d_3}\Big)}(b_{2,\inf}-\frac{\chi_{2}l}{d_{3}})+\frac{l\chi_2}{d_3}{\Big(b_{1,\inf}-k\frac{\chi_2}{d_3}\Big)_-}\Big]a_{2,\inf}\nonumber\\
&>a_{2,\inf}\left(b_{0,\inf}(b_{2,\inf}-\frac{\chi_{2}l}{d_{3}})-l\frac{\chi_2}{d_3}b_{0,sup}\right)\left(a_{1,\inf}-\frac{\chi_{1}k}{d_{3}}\right)\nonumber\\
&-\Big[{ \Big(\Big(b_{1,\sup}-k\frac{\chi_2}{d_3}\Big)_++k\frac{\chi_2}{d_3}\Big)}(b_{2,\inf}-\frac{\chi_{2}l}{d_{3}})+\frac{l\chi_2}{d_3}{ \Big(b_{1,\inf}-k\frac{\chi_2}{d_3}\Big)_-}\Big]a_{0,\sup}a_{2,\inf}.
\end{align*}
Thus
$$
a_{0,\sup}\Big[(b_{2,\inf}-\frac{\chi_{2}l}{d_{3}})(b_{2,\sup}-\frac{\chi_{2}l}{d_{3}})-(l\frac{\chi_2}{d_3})^2\Big]>a_{2,\inf}\left(b_{0,\inf}
(b_{2,\inf}-\frac{\chi_{2}l}{d_{3}})-l\frac{\chi_2}{d_3}b_{0,\sup}\right),
$$
which contradicts to  \eqref{Asymp-exclusion-eq-03}. Hence $L_1=0$.

Next, we prove \eqref{MainAsym-eq-002} and \eqref{MainAsym-eq-003}.  Since  $L_1=0,$ we get from  \eqref{extinction-eq-04} and \eqref{extinction-eq-05} respectively  that
\begin{equation}\label{Asymp-exclusion-eq-010}
L_2\leq \frac{ b_{0,\sup}-\frac{\chi_{2}l}{d_{3}}l_2}{b_{2,\inf}-\frac{\chi_{2}l}{d_{3}}}.
\end{equation}
 and
\begin{equation}\label{Asymp-exclusion-eq-011}
l_2\geq \frac{ b_{0,\inf}-\frac{\chi_{2}l}{d_{3}}L_2}{b_{2,\sup}-\frac{\chi_{2}l}{d_{3}}}.
\end{equation}
Then \eqref{MainAsym-eq-002} follows from \eqref{Asymp-exclusion-eq-010} and \eqref{Asymp-exclusion-eq-011}. Furthermore \eqref{MainAsym-eq-003} follows from \eqref{MainAsym-eq-001}, \eqref{MainAsym-eq-002} and elliptic comparison principle.

Finally, assume that \eqref{v-w-eq00} has a unique positive entire solution $(v^*(x,t;\tilde b_0,\tilde b_2),w^*(x,t;\tilde b_0,\tilde b_2))$ for any
$(\tilde b_0,\tilde b_2)\in H(b_0,b_2)$.  We claim that \eqref{MainAsym-eq-004} holds.
Indeed, if  \eqref{MainAsym-eq-004} does not hold. Then there are $\tilde \epsilon_0>0$ and $t_n\to\infty$ such that
$$
\|v(\cdot,t_n+t_0;t_0,u_0,v_0)-v^*(\cdot,t_n+t_0;b_0,b_2)\|_\infty\ge \tilde \epsilon_0\quad \forall\,\, n=1,2,\cdots.
$$
Without loss of generality, we may assume that
$$
\lim_{n\to\infty} (b_0(t+t_n+t_0,x),b_2(t+t_n+t_0,x))=(\tilde b_0(t,x),\tilde b_2(t,x))
$$
and
$$
\lim_{n\to\infty} (u(x,t+t_n+t_0;t_0,u_0,v_0),v(x,t+t_n+t_0;t_0,u_0,v_0),w(x,t+t_n+t_0;t_0,u_0,v_0))=(0,\tilde v(x,t),w(x,t))
$$
locally uniformly in $(t,x)\in\RR\times \bar{\Omega}$. Then $(\tilde v(x,t),\tilde w(x,t))$ is a positive entire solution of \eqref{v-w-eq00}
and
$$
\|\tilde v(\cdot;0)-v^*(\cdot,0;\tilde b_0,\tilde b_2)\|_\infty\ge \tilde \epsilon_0,
$$
which is a contradiction. Hence \eqref{MainAsym-eq-004} holds.
\end{proof}

\end{document}